\newtheorem{theorem}{Theorem}[section]
\newtheorem{lemma}[theorem]{Lemma}
\newtheorem{remark}[theorem]{Remark}
\newtheorem{corollary}[theorem]{Corollary}
\newtheorem{proposition}[theorem]{Proposition}
\numberwithin{equation}{section}
\title{Accretive Partial Transpose Matrices and Their Connections to Matrix Means}
\author{Eman Aldabbas}
\address{Department of Mathematics, University of Jordan\\ Amman 11942, Jordan}
\email{e\_aldabbas@ju.edu.jo,aldabbas@ualberta.ca}
\author{Mohammad Sababheh}
\address{Department of Basic Sciences, Princess Sumaya University for Technology\\ Amman 11941, Jordan}
\email{sababheh@psut.edu.jo; sababheh@yahoo.com}
\begin{document}

\date{}

\keywords{Accretive matrix, PPT matrix, APT matrix, positive matrix, geometric mean}

\begin{abstract}
    
Accretive partial transpose (APT) matrices have been recently defined, as a natural extension of positive partial transpose (PPT) matrices.

In this paper, we discuss further properties of APT matrices in a way that extends some of those properties known for PPT matrices. 

Among many results, we show that
 if \(A,B,X\) are $n\times n$ complex matrices such that \(A,B\) are sectorial with sector angle $\alpha$ for some \(\alpha\in [0,\pi/2)\), and if \(f:(0,\infty)\to(0,\infty)\) is a certain operator monotone function such that \(\begin{bmatrix}
        \cos^2(\alpha) f(A)  & X\\
        X^* & \cos^2(\alpha) f(B)
    \end{bmatrix}\) is APT, Then \(\begin{bmatrix}
        f(A)\nabla_t f(B) & X\\
        X^* & f(A \nabla_tB )
    \end{bmatrix}\) is APT for any \(0\leq t\leq 1\), where $\nabla_t$ is the weighted arithmetic mean.
\end{abstract}

\maketitle
\section{Introduction}\label{sec1}

In the sequel, upper case letters will be used to denote square matrices of appropriate sizes. The zero matrix will be denoted by $O$, while the identity matrix by $I$. The algebra of all $n\times n$ matrices is denoted by $\mathcal{M}_n$. Thus, if $A,B,C,D\in\mathcal{M}_n$, then the $2\times 2$ block form $\left[\begin{matrix}A&B\\C&D\end{matrix}\right]$ is an element of $\mathcal{M}_{2n}.$

Block matrices have acquired a considerable attention in the literature, as they  can be used to better understand certain notions and to simplify some proofs. We refer the reader to \cite{gumus2022positive,huang2022refining, lee2015off,li2021partial,sababheh2023matrix,sababheh2023new,southworth2020note} where applications of block matrices can be seen.

An important subclass of $\mathcal{M}_n$ is the cone of positive matrices. Recall that $A\in\mathcal{M}_n$ is said to be positive semi-definite, and written as $A\geq O$, if $\left<Ax,x\right>\geq 0$ for all $x\in\mathbb{C}^n.$ If, for all nonzero $x\in\mathbb{C}^n$, we have $\left<Ax,x\right>>0$, then $A$ is said to be positive definite, written as $A>O$. 

Given $A\in\mathcal{M}_n$, the numerical range $W(A)$ of $A$ is defined as the image of the unit sphere of $\mathbb{C}^n$ under the quadratic form $x\mapsto \left<Ax,x\right>.$ That is,
\[W(A)=\left\{\left<Ax,x\right>:x\in\mathbb{C}^n, \|x\|=1\right\}.\]
The numerical range has been an important notion in matrix analysis, with numerous applications and considerable interest in the literature. Studies of the numerical range cover all possible related notions, such as its shape in the complex plane \cite{bebiano2023numerical,chien1998geometric,keeler1997numerical}, geometric properties such as compactness and convexity \cite{ballantine1978numerical,davis1971toeplitz,parker1951characteristic}, inclusion of eigenvalues \cite{f2008numerical}, and inclusion of the origin \cite{gau2008numerical}.

The concern of whether $W(A)$ includes the origin or not has received the attention of researchers due to the following interesting property \cite{johnson1972matrices}.

\begin{proposition}
    Let $A\in\mathcal{M}_n$ be such that $0\not\in W(A).$ Then there exists $\theta\in\mathbb{R}$ such that 
    \[W\left(e^{i\theta}A\right)\subseteq \{z\in\mathbb{C}: \Re z>0\},\]
    where $\Re z=\frac{z+z^*}{2}$ is the real part of $z$.
\end{proposition}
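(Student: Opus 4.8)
The plan is to exploit the two fundamental geometric properties of the numerical range: $W(A)$ is compact, being the continuous image of the (compact) unit sphere under the quadratic form $x \mapsto \langle A x, x\rangle$, and it is convex by the Toeplitz--Hausdorff theorem. Together with the hypothesis $0 \notin W(A)$, these facts make $W(A)$ a compact convex subset of $\mathbb{C}$ that stays a positive distance away from the origin, and the whole argument reduces to separating the origin from this set by a line and then rotating that line to coincide with the imaginary axis.

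First, I would set $d = \inf\{|z| : z \in W(A)\}$ and note that $d > 0$: compactness guarantees that the infimum is attained at some point $z_0 \in W(A)$, and since $0 \notin W(A)$ we have $|z_0| = d > 0$. The point $z_0$ is the nearest point of the convex set $W(A)$ to the origin, so the classical nearest-point characterization for convex sets yields $\Re\big(\overline{z_0}(z - z_0)\big) \geq 0$ for every $z \in W(A)$, that is, $\Re(\overline{z_0}\, z) \geq |z_0|^2 = d^2 > 0$.

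Next I would convert this into the desired rotation. Writing $z_0 = d\, e^{i\varphi}$ and choosing $\theta = -\varphi$, the inequality above becomes $\Re(e^{i\theta} z) \geq d > 0$ for every $z \in W(A)$. The final step is the identity $W(e^{i\theta} A) = e^{i\theta} W(A)$, which is immediate from $\langle e^{i\theta} A x, x\rangle = e^{i\theta}\langle A x, x\rangle$; it transports the bound to $\Re w \geq d > 0$ for every $w \in W(e^{i\theta} A)$, giving the claimed inclusion $W(e^{i\theta} A) \subseteq \{z : \Re z > 0\}$.

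The substantive ingredient, and the only place where anything beyond elementary plane geometry enters, is the convexity of $W(A)$; without it the origin could fail to be separable from $W(A)$ by a single line and the argument would collapse. Compactness plays the secondary but essential role of ensuring the separating distance $d$ is strictly positive, so that the resulting half-plane is open rather than merely closed.
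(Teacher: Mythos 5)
Your proof is correct. The paper does not actually prove this proposition --- it is quoted as a known result from Johnson's thesis \cite{johnson1972matrices} --- so there is no in-paper argument to compare against; but your route (compactness of $W(A)$ as the continuous image of the unit sphere, convexity via Toeplitz--Hausdorff, the nearest-point variational inequality $\Re\big(\overline{z_0}(z-z_0)\big)\geq 0$ to produce a strictly separating line through the origin's projection, and the homogeneity $W(e^{i\theta}A)=e^{i\theta}W(A)$) is the standard and complete proof of this fact. Your closing remark is also on point: convexity is the essential ingredient, and compactness is what upgrades the closed half-plane to an open one.
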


Due to this property, an accretive matrix was defined as a matrix with numerical range in 
$\Gamma:=\{z\in\mathbb{C}: \Re z\geq 0\}.$ For simplicity, we will write $A\in \Gamma_n$ to mean that $A\in\mathcal{M}_n$ is accretive.

Letting $\Re(A)=\frac{A+A^*}{2}$ and $\Im(A)=\frac{A-A^*}{2i}$ be the real and imaginary parts of $A$, respectively, we have 
\[A\in\Gamma_n \Leftrightarrow \Re(A)\geq O.\]
If $\Re(A)>O$, we write $A\in\Gamma_n^+,$ to mean that $A$ is strictly accretive. We remark that in the literature, accretive matrices were defined as those matrices $A$ with $\Re(A)\geq O$, sometimes and as those with $\Re(A)>O.$ This is just a conventional matter.

\begin{remark}\label{rem_1}
    If $A\in\Gamma_n,$ we may define $A_k=A+\frac{1}{k}I$ as a sequence of matrices such that $A_k\in\Gamma_n^+,$ and $A_k\to A.$ Thus, elements of $\Gamma_n$ are limits, in the  norm topology, of elements from $\Gamma_n^+.$ This remark becomes handy when we need to deal with the inverse of an accretive matrix. Elements of $\Gamma_n^+$ are invertible, while some elements of $\Gamma_n$ are not.
\end{remark}

Related to accretive matrices, the notion of sectorial matrices is used as an alternative terminology. If we define, for $\alpha\in \left[0,\frac{\pi}{2}\right),$
\[S_{\alpha}= \lbrace 
 z\in \mathbb{C}: \Re(z)\geq 0 \text{ and } \vert \Im(z)\vert \leq \tan(\alpha) \Re(z)\rbrace,\]
 then it can be seen that $S_{\alpha}$ is a sector in the right half plane, with half angle $\alpha$. It can be easily shown that 
 \[A\in\Gamma_n \Leftrightarrow W(A)\in S_{\alpha}\;for\;some\;\alpha\in \left[0,\frac{\pi}{2}\right).\]
For $\alpha\in [0,\pi/2)$, we use the notation $\Pi_{n,\alpha}$ to denote all matrices in $\mathcal{M}_n$ with numerical range in $S_{\alpha}.$ Thus, it is evident that
\[\Gamma_n=\bigcup_{\alpha\in [0,\pi/2)}\Pi_{n,\alpha}.\]

The class of accretive matrices has recently gained a great attention. We refer the reader to \cite{kato1961fractional,arlinskii2002sectorial,bedrani2021positive,bedrani2021numerical,bedrani2021weighted,drury2015principal,drury2014singular,furuichi2024further,lin2016some,raissouli2017relative,sababheh2022operator,tan2020extension,zhang2015matrix, nasiri2022new,yang2022some} as a list of such references.

For the \(2 \times 2\) block matrix \(M= \begin{bmatrix}
        A & X\\
        X^*& B
    \end{bmatrix}\), its partial transpose is defined by \(M^{\tau} = \begin{bmatrix}
        A & X^*\\
        X& B
    \end{bmatrix}\). It is well known that if \(M\geq O\), then \(M^{\tau}\geq O\) need not be true \cite[p. 221]{ghaemi2021advances}. We say that \(M\) is positive partial transpose (PPT for short) if \(M\geq O\) and \(M^{\tau}\geq O\). The class of PPT matrices has been a renowned topic with significance in mathematical physics. We refer the reader to \cite{alakhrass2021note,alakhrass2023singular,gumus2022positive,lin2015inequalities,lin2015hiroshima,sababheh2024further} for some recent progress on PPT matrices, and to \cite{horodecki1996necessary,peres1996separability} for significance of this class in physics.
    
    Motivated by the recent treatment of accretive matrices, accretive partial transpose (APT) matrices have been defined recently in  \cite{kuai2018extension} to extend the concept of PPT matrices. The \(2 \times 2\) block matrix \(M= \begin{bmatrix}
        A & X\\
        Y^*& B
    \end{bmatrix}\) is called accretive partial transpose if \(M\) and \(M^{\tau}\) are accretive. Clearly, the class of APT matrices includes that  of PPT matrices. 

    One can see that if \(M\) is accretive, then so is \(\begin{bmatrix}
   B&Y^*\\
   X&A
\end{bmatrix}\), by conjugating  with \(\begin{bmatrix}
    O&I\\
    I&O
\end{bmatrix}\).\\

We refer the reader to \cite{huang2022refining,huang2023singular,kuai2018extension,liu2021inequalities,yang2022inequalities} for  properties and inequalities related to  APT matrices.


Given $A\in\mathcal{M}_n$, if all eigenvalues of \(A\) are real, we use the notation $\lambda_j(A)$ to denote the $j-$th eigenvalue.   The singular values of \(A\in\mathcal{M}_n\) are the eigenvalues of \(\vert A\vert = (A^*A)^{1/2}\), and $s_j(A)$ will denote the $j-$th singular value of $A$ . A unitarily invariant  norm \(\Vert \cdot\Vert_u\) on $\mathcal{M}_n$ is a matrix norm that satisfies the additional property   \(\Vert UAV \Vert_u = \Vert A\Vert_u\) for all $A,U,V\in\mathcal{M}_n$ such that $U$ and $V$ are unitaries. In the sequel, when we write $\|\cdot\|_u$, we mean an arbitrary unitarily invariant norm on $\mathcal{M}_n$, normalized so that 
$\|{\text{diag}}(1,0,\ldots,0)\|=1$. Of particular interest, the spectral norm (or the usual operator norm) is denoted by $\|\cdot\|.$

The main purpose of this paper is to prove new properties of APT matrices, in a way that extends some known facts about PPT ones. However, to state and prove our results, we will need some results and notions from the literature, as discussed in the next section.
\section{Preliminaries}
In this section, we discuss briefly some needed notions and facts that are known to researchers in this field. A familiar reader with the topic can proceed to the main results section immediately. The organization of the stated results in this section will be as follows. We begin with some results about positivity of the block form $M$, then some results about accretive matrices will be presented. After that results on PPT and APT matrices will be stated.

\begin{lemma}(\cite[Theorem 1.3.3]{bhatia2009positive})\label{lemma_pos_inv}
    Let \(A,B, X\in \mathcal{M}_n \) be such that \(A,B>O\). Then \(M=\begin{bmatrix}
    A & X\\
    X^* &  B
\end{bmatrix} \geq O, \) if and only if \(A\geq X B^{-1 } X^*\).
\end{lemma}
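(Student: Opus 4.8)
The plan is to reduce the positivity of the block matrix $M$ to that of a block-diagonal matrix through a congruence transformation, i.e.\ the classical Schur-complement factorization. Since $B>O$ is assumed, $B$ is invertible, so I may introduce the invertible upper-triangular block matrix
\[
C=\begin{bmatrix} I & XB^{-1}\\ O & I\end{bmatrix},
\qquad
C^{-1}=\begin{bmatrix} I & -XB^{-1}\\ O & I\end{bmatrix},
\]
and verify, by a direct expansion of three block factors, the identity
\[
M=\begin{bmatrix} A & X\\ X^* & B\end{bmatrix}
=C\begin{bmatrix} A-XB^{-1}X^* & O\\ O & B\end{bmatrix}C^*.
\]
The only algebraic facts needed in this verification are that $B$ is Hermitian, so that $(XB^{-1})^*=B^{-1}X^*$, and that in the $(1,1)$ block the term $XB^{-1}X^*$ produced by the multiplication cancels against $-XB^{-1}X^*$ to recover $A$, while the off-diagonal blocks collapse to $X$ and $X^*$.

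Next I would invoke the elementary and standard fact that congruence by an invertible matrix preserves positive semi-definiteness: for invertible $C$ one has $M\geq O$ if and only if $C^{-1}M\left(C^{-1}\right)^*\geq O$. Applying this to the factorization above, and noting $C^{-1}M\left(C^{-1}\right)^*$ equals the block-diagonal middle factor, gives the equivalence
\[
M\geq O
\iff
\begin{bmatrix} A-XB^{-1}X^* & O\\ O & B\end{bmatrix}\geq O.
\]
A Hermitian block-diagonal matrix is positive semi-definite precisely when each diagonal block is, so the right-hand condition is equivalent to the pair $A-XB^{-1}X^*\geq O$ and $B\geq O$. The hypothesis $B>O$ renders the second condition automatic, leaving exactly $A\geq XB^{-1}X^*$. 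Since every step in the chain is an equivalence, this settles both implications of the ``if and only if'' at once.

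I do not anticipate a genuine obstacle, as this is a foundational result; the only points requiring care are the bookkeeping in the congruence identity (tracking which factor is $C$ and which is $C^*$) and the twofold use of the hypothesis $B>O$, which is needed both to form $B^{-1}$ and to discard the redundant condition $B\geq O$. Should one prefer to bypass the factorization, an equivalent route is to complete the square on the quadratic form: writing a vector of $\mathbb{C}^{2n}$ as a pair $(u,v)$ with $u,v\in\mathbb{C}^n$, one obtains $\left<Mz,z\right>=\left<\left(A-XB^{-1}X^*\right)u,u\right>+\left<B\left(v+B^{-1}X^*u\right),v+B^{-1}X^*u\right>$; here the second summand is nonnegative because $B>O$ and vanishes for the choice $v=-B^{-1}X^*u$, which yields the forward direction, while nonnegativity of the first summand for all $u$ is exactly $A\geq XB^{-1}X^*$, giving the converse. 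The congruence argument is the cleaner of the two and is the approach I would present.
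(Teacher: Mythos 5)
Your proof is correct; the paper gives no proof of this lemma at all, importing it verbatim from \cite[Theorem 1.3.3]{bhatia2009positive}, and your Schur-complement congruence factorization $M=C\,\mathrm{diag}(A-XB^{-1}X^{*},\,B)\,C^{*}$ is precisely the standard argument found in that source. As a minor observation, your argument only ever uses $B>O$ (to form $B^{-1}$ and to discard the redundant block condition $B\geq O$), so the hypothesis $A>O$ in the statement is not actually needed for the equivalence.
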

As an interesting contribution, we prove the accretive version of Lemma \ref{lemma_pos_inv} in Theorem \ref{thm_accretive_equiv} below.

\begin{lemma}\label{lemma_pos_bl}
     Let \(A,B, X\in \mathcal{M}_n \) be such that   \(M= \begin{bmatrix}
        A & X\\
        X^*& B
    \end{bmatrix}\geq O\).
    \begin{enumerate}[(i)]
        \item \cite{bourin2013positive} For any unitarily invariant norm $\|\cdot\|_{u}$ on $\mathcal{M}_n$, 
        \[ \Vert M\Vert_u \leq \Vert A+B\Vert_u .\]
          \item \cite{bourin2013positive} If \(X\) is Hermitian, then there exist two unitary matrices $U,V\in\mathcal{M}_n$ such that \[M= \dfrac{1}{2}\Big ( U(A+B)U^* + V(A+B)V^*\Big).\] 
          \item \label{Tao's singular values ineq}\cite{tao2006more} For $j=1,\ldots,n$,
          \[2s_j(X)\leq s_j(M).\]
    \end{enumerate}
\end{lemma}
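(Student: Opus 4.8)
The three assertions are of different natures, so the plan is to prove part (iii) directly, to treat part (ii) as the analytic heart of the lemma, and to deduce part (i) from part (ii).

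For part (iii) the idea is to extract the off-diagonal block of $M$ by a symmetry and then compare eigenvalues. Put $U_0=\begin{bmatrix} I & O\\ O & -I\end{bmatrix}\in\mathcal M_{2n}$, which is a Hermitian unitary, and note that
\[
\frac12\left(M-U_0MU_0^*\right)=\begin{bmatrix} O & X\\ X^* & O\end{bmatrix}=:N .
\]
The eigenvalues of $N$ are precisely $\pm s_1(X),\dots,\pm s_n(X)$, so $\lambda_j(N)=s_j(X)$ for $1\le j\le n$. Since $U_0MU_0^*$ is unitarily similar to $M$ and $M\ge O$, applying Weyl's inequality $\lambda_j(P+Q)\le\lambda_j(P)+\lambda_1(Q)$ with $P=\tfrac12 M$ and $Q=-\tfrac12 U_0MU_0^*$ gives
\[
s_j(X)=\lambda_j(N)\le\frac12\lambda_j(M)-\frac12\lambda_{2n}(M)\le\frac12\lambda_j(M)=\frac12 s_j(M),
\]
because $\lambda_1(Q)=-\tfrac12\lambda_{2n}(M)\le0$ and eigenvalues and singular values coincide for the positive matrix $M$. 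This is exactly $2s_j(X)\le s_j(M)$, and I expect this part to be routine.

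Part (ii) is where the real work lies. Writing $M=C^*C$ with $C=[\,C_1\ \ C_2\,]$ and $C_1,C_2\in\mathcal M_{2n\times n}$, one has $A=C_1^*C_1$, $B=C_2^*C_2$, $X=C_1^*C_2$, and the hypothesis that $X$ is Hermitian becomes $C_1^*C_2=C_2^*C_1$. The sum of the diagonal blocks is $A+B$, and the goal is to realise $M$ as an average $\tfrac12\,U(A+B)U^*+\tfrac12\,V(A+B)V^*$ of two unitary conjugates, where $A+B$ is read as the element $(A+B)\oplus O_n$ of $\mathcal M_{2n}$. The plan is to follow the Bourin--Lee decomposition scheme: the Hermitian symmetry of $X$ should force the spectrum of $M$ and that of $(A+B)\oplus O_n$ to be related by the two-term majorisation that characterises such averages (the trace already matches, since $\operatorname{tr}M=\operatorname{tr}(A+B)$), and from this relation one constructs the required unitaries $U,V$. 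Producing $U$ and $V$ explicitly from the Hermitian structure is the step I expect to be the main obstacle.

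Granting part (ii), part (i) follows quickly when $X$ is Hermitian: a unitarily invariant norm is invariant under unitary conjugation, and by the adopted normalisation $\|(A+B)\oplus O_n\|_u=\|A+B\|_u$, so the triangle inequality yields
\[
\|M\|_u=\Big\|\tfrac12\,U(A+B)U^*+\tfrac12\,V(A+B)V^*\Big\|_u\le\|A+B\|_u .
\]
For a general off-diagonal $X$ this decomposition is no longer available, and the plan is instead to establish the weak majorisation $\sum_{j=1}^{k}s_j(M)\le\sum_{j=1}^{k}s_j(A+B)$ for every $k$ (with the convention that the singular values of the $n\times n$ matrix $A+B$ are padded by zeros), which still implies $\|M\|_u\le\|A+B\|_u$ for every unitarily invariant norm via Fan dominance. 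This is Bourin's subadditivity inequality, and it is the second place where genuine effort is needed.
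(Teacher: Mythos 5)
The paper itself offers no proof of this lemma; all three parts are quoted from \cite{bourin2013positive} and \cite{tao2006more}, so the only question is whether your arguments stand on their own. Part (iii) does: the pinching $\tfrac12(M-U_0MU_0^*)$ isolates the off-diagonal block, the eigenvalues of $\left[\begin{smallmatrix} O & X\\ X^* & O\end{smallmatrix}\right]$ are $\pm s_j(X)$, and Weyl's inequality together with $\lambda_{2n}(M)\ge 0$ gives $2s_j(X)\le\lambda_j(M)=s_j(M)$. That is a complete and correct proof of Tao's inequality. Part (ii), however, is only a statement of intent: identifying the Bourin--Lee scheme and observing that the traces match does not produce the unitaries $U,V$, and the step you yourself flag as the main obstacle is the entire content of the result.

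The more serious problem is part (i). Your fallback for non-Hermitian $X$ --- proving the weak majorisation $\sum_{j\le k}s_j(M)\le\sum_{j\le k}s_j(A+B)$ and invoking Fan dominance --- cannot work, because the inequality already fails for $k=1$: take $n=2$, $A=\operatorname{diag}(1,0)$, $B=\operatorname{diag}(0,1)$ and $X$ the matrix with a single $1$ in position $(1,2)$. Then $M=vv^*$ with $v=(1,0,0,1)^T$, so $M\geq O$ and $s_1(M)=2$, while $A+B=I_2$ gives $s_1(A+B)=1$. Hence $\Vert M\Vert\leq\Vert A+B\Vert$ fails for the spectral norm, and part (i) as stated (with no hypothesis on $X$) is itself false; in \cite{bourin2013positive} the norm inequality is a corollary of the decomposition in (ii) and carries the same Hermiticity (or normality) hypothesis on the off-diagonal block. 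Once (i) is restricted to Hermitian $X$, your deduction of (i) from (ii) via unitary invariance and the triangle inequality is exactly the standard and correct route, so all the remaining work sits in part (ii).
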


The notion of matrix means is indeed essential in studying positivity of block matrices. We recall that the weighted geometric mean of the positive definite matrices \(A\) and \(B\) is defined by the equation 
\begin{equation}
    A\sharp_t B = A^{1/2} \Big(A^{-1/2} B A^{-1/2} \Big)^t A^{1/2}, \; 0\leq t\leq 1.
\end{equation}
When \(t=\frac{1}{2}\), we simply write \(A\sharp B\). The weighted geometric mean is a special case of matrix means. Recall that a matrix mean on the set of all positive definite matrices is a binary operation $\sigma$ defined by 
\begin{equation}\label{defintion of op mean}
        A \sigma B = A^{1/2} \, f(A^{-1/2} B A^{-1/2})\, A^{1/2},
\end{equation} 
where \(f\in \mathfrak{m}:= \lbrace f:(0,\infty)\longrightarrow (0,\infty): f {\text{ is an operator monotone function with } } f(1)=1\rbrace\). By an operator monotone function \(f:(0,\infty)\to (0,\infty)\), we mean a function that satisfies \(f(A)\leq f(B)\) whenever \(O<A\leq B\). The geometric mean above corresponds to the function \(f(x)=x^{t}, 0\leq t\leq 1\). Another important mean is the weighted arithmetic mean defined for \(A,B\geq O\) as 
\(A\nabla_t B= (1-t)A+t B,  0\leq t\leq 1.\) For background on matrix means, we refer the reader to \cite{kubo1980means}. Although the above definition for matrix means is stated for positive definite matrices, it is still valid for positive semi-definite matrices via a limit approach. The geometric mean enjoys the following properties \cite{pusz1975functional}.

\begin{lemma}\label{w-g-mean of positive matrices}
Let \(A,B,C,D \geq O\), and let \(t\in [0,1]\). Then 
\begin{enumerate}[(i)]
   \item \(A \sharp_{t} B\geq O.\). 
    \item \(A \sharp_{1-t} B= B \sharp_t A\).
    \item \label{monotonocity of the w-g-m} \(A \sharp_{t} B \leq C \sharp_t D\), if \(A\leq C\) and \(B\leq D\).
\end{enumerate}
\end{lemma}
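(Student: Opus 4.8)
The plan is to reduce everything to the case where $A,B,C,D$ are positive definite (hence invertible); the positive semi-definite case then follows by applying the result to $A+\varepsilon I,\,B+\varepsilon I,\,C+\varepsilon I,\,D+\varepsilon I$ and letting $\varepsilon\to 0^+$, using the continuity of the functional calculus and of matrix multiplication, exactly the limit approach already mentioned. So assume all matrices below are positive definite. Part (i) is then immediate from the definition $A\sharp_t B=A^{1/2}(A^{-1/2}BA^{-1/2})^tA^{1/2}$: the matrix $A^{-1/2}BA^{-1/2}$ is positive definite, being congruent to $B>O$; its $t$-th power is positive definite since $x\mapsto x^t$ maps $(0,\infty)$ into $(0,\infty)$; and congruence by the invertible $A^{1/2}$ preserves positivity.

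For part (ii), I would use the polar decomposition of $T:=A^{1/2}B^{-1/2}$. Setting $K=A^{-1/2}BA^{-1/2}$ and $L=B^{-1/2}AB^{-1/2}$, a direct computation gives $T^*T=L$ and $TT^*=K^{-1}$, so that $T=WL^{1/2}$ for some unitary $W$; comparing with the left polar decomposition $T=(TT^*)^{1/2}W=K^{-1/2}W$ yields $WL^{1/2}W^*=K^{-1/2}$, and therefore $WL^{r}W^*=K^{-r}$ for every real $r$. Since $T=A^{1/2}B^{-1/2}=WL^{1/2}$, we have $A^{1/2}=WL^{1/2}B^{1/2}$, and taking adjoints also $A^{1/2}=B^{1/2}L^{1/2}W^*$. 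Substituting these into $A\sharp_{1-t}B=A^{1/2}K^{1-t}A^{1/2}$ and writing $K^{1-t}=WL^{t-1}W^*$, the unitary factors cancel and the expression collapses to $B^{1/2}L^{t}B^{1/2}=B\sharp_t A$, which is exactly the claimed symmetry.

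For part (iii), I would first prove monotonicity in each slot separately. If $B\leq D$, then $A^{-1/2}BA^{-1/2}\leq A^{-1/2}DA^{-1/2}$, and because $x\mapsto x^t$ is operator monotone on $(0,\infty)$ for $t\in[0,1]$, the $t$-th powers are ordered the same way; congruence by $A^{1/2}$ then yields $A\sharp_t B\leq A\sharp_t D$. Monotonicity in the first slot follows from the symmetry of part (ii): if $A\leq C$ then, since $1-t\in[0,1]$, the previous step gives $A\sharp_t B=B\sharp_{1-t}A\leq B\sharp_{1-t}C=C\sharp_t B$. Chaining these, the hypotheses $A\leq C$ and $B\leq D$ give $A\sharp_t B\leq C\sharp_t B\leq C\sharp_t D$.

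The routine parts are the congruence and polar-decomposition bookkeeping; the one genuinely non-elementary ingredient, and the step I expect to be the crux, is the operator monotonicity of $x\mapsto x^t$ for $t\in[0,1]$ used in part (iii), which rests on the classical theory of operator monotone functions rather than on direct manipulation.
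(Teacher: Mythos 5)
The paper does not prove this lemma at all: it is quoted verbatim from the Pusz--Woronowicz reference \cite{pusz1975functional} as background, so there is no in-paper argument to compare against. Your proof is correct and self-contained. Part (i) is the standard congruence observation; your part (ii) computation checks out (with $T=A^{1/2}B^{-1/2}$ one indeed has $T^*T=L$, $TT^*=K^{-1}$, hence $WL^{r}W^*=K^{-r}$, and the substitution collapses $A^{1/2}K^{1-t}A^{1/2}$ to $B^{1/2}L^{t}B^{1/2}$); and part (iii) correctly combines the L\"owner--Heinz theorem (operator monotonicity of $x\mapsto x^t$ for $t\in[0,1]$) for the second slot with the symmetry of (ii) for the first slot. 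The only point worth stating more carefully is the passage to semidefinite matrices: for singular $A,B$ the geometric mean is \emph{defined} as the decreasing limit $\lim_{\varepsilon\downarrow 0}(A+\varepsilon I)\sharp_t(B+\varepsilon I)$ (which exists precisely because of the monotonicity you prove in the definite case), so the three properties transfer because limits preserve $\geq O$ and $\leq$; this is the same convention the paper itself adopts, so your reduction is consistent with it.
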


The notion of matrix means was extended to accretive matrices in \cite{bedrani2021positive}, where the definition in \eqref{defintion of op mean} applies to strictly accretive matrices.

For a non-zero matrix mean \(\sigma\), the adjoint \(\sigma^*\) is defined by \cite{kubo1980means}  \[A \sigma^* B = \Big(A^{-1} \sigma B^{-1} \Big)^{-1}.\]

One can easily verify that for any strictly accretive matrices \(A\) and \(B\), and for any invertable matrix \(X\), we have \cite[Theorem 5.4]{bedrani2021positive}
\[X^* (A \sigma B) X=( X^* A X) \sigma (X^*B X),\]
and for any  \(A,B\geq O\) and for any matrix \(X,\), we have \cite{kubo1980means} 
\begin{equation}\label{eq_conj_sig}
  X^* (A \sigma B )X \leq (X^* A X) \sigma (X^* B X).  
\end{equation}

For the next result, we clarify one point. When $f\in\mathfrak{m},$ it is defined on $(0,\infty).$ However, it is known that such $f$ has an analytic continuation to $\mathbb{C}\backslash (-\infty,0],$ see  \cite[Theorem V.4.7]{bhatia2013matrix}. See also \cite[Proposition 1.2]{bedrani2021positive} for further details. When $A\in\Gamma_n^+$, we know that the spectrum of $A$ avoids $(-\infty,0],$ which makes $f(A),$ for such $A,$ well defined.

Moreover, by appealing to a limit argument as in Remark \ref{rem_1}, we can see how to pass from $\Gamma_n^+$ to $\Gamma_n$ for most results.  In what follows, $\Pi_{n,\alpha}^{+}=\Gamma_n^+\cap \Pi_{n,\alpha}.$
\begin{lemma}\cite[Proposition 7.1, Proposition 7.2]{bedrani2021positive}\label{Copmarision bw R(f(A)) and f(R(A))}
    Let \(A\in \Pi_{n,\alpha}^{+}\) for some \(\alpha\in [0,\pi/2)\)  and let \(f \in \mathfrak{m}\). Then \[\cos^2(\alpha) \Re(f(A))\leq f(\Re(A))\leq \Re(f(A)). \]
    Moreover \cite{drury2014singular,lin2015extension},
    \[\Re(A^{-1})\leq (\Re(A))^{-1}\leq \sec^2\alpha\,\Re(A^{-1}).\]
\end{lemma}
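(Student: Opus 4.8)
The plan is to establish the two ``Moreover'' resolvent inequalities first and then bootstrap them to the function $f$. Throughout write $H=\Re(A)$ and $K=\Im(A)$, so that $A=H+iK$ with $H>O$ (since $A\in\Pi_{n,\alpha}^{+}\subseteq\Gamma_n^{+}$), and the sectoriality reads $\pm K\leq\tan(\alpha)\,H$. The key device is the factorization $A=H^{1/2}(I+iT)H^{1/2}$, where $T=H^{-1/2}KH^{-1/2}$ is Hermitian; the sector condition then becomes exactly $\|T\|\leq\tan(\alpha)$, i.e.\ $T^{2}\leq\tan^{2}(\alpha)\,I$.

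For the resolvent inequalities I would compute $A^{-1}=H^{-1/2}(I+iT)^{-1}H^{-1/2}$ and use $(I+iT)^{-1}(I-iT)^{-1}=(I+T^{2})^{-1}$ to obtain the closed form $\Re(A^{-1})=H^{-1/2}(I+T^{2})^{-1}H^{-1/2}$. Conjugating by $H^{1/2}$, the inequality $\Re(A^{-1})\leq(\Re A)^{-1}=H^{-1}$ becomes $(I+T^{2})^{-1}\leq I$, which is just $T^{2}\geq O$; and $(\Re A)^{-1}\leq\sec^{2}(\alpha)\,\Re(A^{-1})$ becomes $I+T^{2}\leq\sec^{2}(\alpha)\,I$, i.e.\ $T^{2}\leq\tan^{2}(\alpha)\,I$. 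Both hold, proving the ``Moreover'' part; note the left inequality used only $H>O$, while the right one used the sector bound.

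Next, for the right-hand inequality $f(\Re A)\leq\Re(f(A))$, I would invoke the L\"owner integral representation $f(t)=a+bt+\int_{0}^{\infty}\frac{t}{t+s}\,d\mu(s)$ with $a,b\geq0$ and $\mu\geq0$. Writing $\frac{t}{t+s}=1-s(t+s)^{-1}$ and using linearity of $\Re$ gives
\[\Re(f(A))-f(\Re A)=\int_{0}^{\infty}s\bigl[(\Re A+s)^{-1}-\Re\bigl((A+s)^{-1}\bigr)\bigr]\,d\mu(s).\]
Since $A+s\in\Pi_{n,\alpha}^{+}$ with the same angle $\alpha$ (its real part is $H+s$ and its imaginary part is still $K$, and $\pm K\leq\tan(\alpha)H\leq\tan(\alpha)(H+s)$), the already-proved inequality $\Re(B^{-1})\leq(\Re B)^{-1}$ applied to $B=A+s$ makes each integrand nonnegative, and the right inequality follows.

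The main obstacle is the left inequality $\cos^{2}(\alpha)\,\Re(f(A))\leq f(\Re A)$: a termwise use of $\Re((A+s)^{-1})\geq\cos^{2}(\alpha)(\Re A+s)^{-1}$ inside the integral is too lossy and does not close. Instead I would route through the adjoint function $f^{*}(t)=1/f(1/t)$, which again lies in $\mathfrak m$. Applying the already-established right inequality to $f^{*}$ and to $A^{-1}\in\Pi_{n,\alpha}^{+}$, and using $f^{*}(A^{-1})=f(A)^{-1}$, yields $f\bigl((\Re(A^{-1}))^{-1}\bigr)^{-1}\leq\Re(f(A)^{-1})$. Combining this with $\Re(f(A)^{-1})\leq(\Re f(A))^{-1}$ (the left ``Moreover'' inequality applied to the accretive invertible matrix $f(A)$) gives $\Re(f(A))\leq f\bigl((\Re(A^{-1}))^{-1}\bigr)$. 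Finally the right ``Moreover'' inequality yields $(\Re(A^{-1}))^{-1}\leq\sec^{2}(\alpha)\,\Re(A)$, so monotonicity of $f$ together with the elementary operator-concavity bound $f(cX)\leq c\,f(X)$ for $c=\sec^{2}(\alpha)\geq1$ (valid since $f$ is operator concave with $f(0^{+})\geq0$) give $\Re(f(A))\leq f(\sec^{2}(\alpha)\Re A)\leq\sec^{2}(\alpha)f(\Re A)$, which is the claim. The delicate points to verify are that $f^{*}\in\mathfrak m$, that $f(A)$ is accretive and invertible (so that $f$ must carry the sector into the right half-plane), and the concavity scaling step.
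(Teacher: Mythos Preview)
The paper does not include its own proof of this lemma: it is quoted verbatim from \cite{bedrani2021positive} and \cite{drury2014singular,lin2015extension} in the preliminaries section, so there is no in-paper argument to compare against.

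That said, your argument is correct and self-contained. The factorization $A=H^{1/2}(I+iT)H^{1/2}$ with $T=H^{-1/2}KH^{-1/2}$ is the right device, and the identity $\Re(A^{-1})=H^{-1/2}(I+T^{2})^{-1}H^{-1/2}$ reduces both resolvent inequalities to the trivial bounds $O\leq T^{2}\leq\tan^{2}(\alpha)I$. The right inequality $f(\Re A)\leq\Re(f(A))$ via the L\"owner representation and termwise use of $\Re((A+s)^{-1})\leq(\Re A+s)^{-1}$ is clean. For the left inequality, the detour through the adjoint $f^{*}(t)=1/f(1/t)$ applied to $A^{-1}$ is an elegant way to avoid the loss you correctly flagged in the naive termwise approach; this is in fact the route taken in \cite{bedrani2021positive}.

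The ``delicate points'' you list are all routine: $f^{*}\in\mathfrak m$ is standard Kubo--Ando; $f(A)\in\Gamma_n^{+}$ follows immediately from the right inequality you just proved, since $\Re(f(A))\geq f(\Re A)>O$; and the scaling $f(cX)\leq cf(X)$ for $c\geq1$ holds because $f(t)/t$ is nonincreasing (concavity plus $f(0^{+})\geq0$), hence the inequality holds spectrally for the commuting pair $cX,\,X$. The one fact you invoke without comment is $A^{-1}\in\Pi_{n,\alpha}^{+}$: this drops out of the same factorization, since $\Im(A^{-1})=-H^{-1/2}T(I+T^{2})^{-1}H^{-1/2}$, so $\pm\Im(A^{-1})\leq\tan(\alpha)\Re(A^{-1})$ reduces again to $\pm T\leq\tan(\alpha)I$.
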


\begin{lemma}\cite{bedrani2021positive}\label{accretivity of A sigma B}
    Let \(A,B\in \Pi_{n,\alpha}^{+}\) for some \(\alpha\in [0,\pi/2)\) and let $\sigma$ be any matrix mean. Then 
  \[\Re(A) \sigma \Re(B) \leq \Re(A \sigma B) \leq \sec^2(\alpha) (\Re(A) \sigma \Re(B)).\]  
  So, if $A,B\in\Gamma_n^+$, then so does \(A\sigma B\).
\end{lemma}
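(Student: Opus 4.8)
The plan is to prove the two inequalities separately, to read off the final assertion from the lower one, and to reduce an arbitrary mean to its parallel‑sum constituents. First, the lower bound already gives the last sentence: if $A,B\in\Gamma_n^+$ then $A,B\in\Pi_{n,\alpha}^{+}$ for a common $\alpha$, so $\Re(A),\Re(B)>O$ and hence $\Re(A)\,\sigma\,\Re(B)>O$ (a mean of positive definite matrices is positive definite); the inequality $\Re(A)\,\sigma\,\Re(B)\le\Re(A\sigma B)$ then forces $\Re(A\sigma B)>O$, i.e. $A\sigma B\in\Gamma_n^+$. To handle a general mean, I would invoke the Kubo--Ando integral representation \cite{kubo1980means}: there are $a,b\ge0$ and a positive measure $\mu$ on $(0,\infty)$ with
\[A\sigma B=aA+bB+\int_{(0,\infty)}\frac{1+s}{s}\,\big((sA):B\big)\,d\mu(s),\qquad P:Q:=(P^{-1}+Q^{-1})^{-1}.\]
Since $\Re$ is linear and $\Re(sA)=s\,\Re(A)$, the same representation computes $\Re(A)\,\sigma\,\Re(B)$ out of the blocks $\Re(A),\Re(B)$ and $s\,\Re(A):\Re(B)$. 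As $sA\in\Pi_{n,\alpha}^{+}$ with $\Re(sA)=s\,\Re(A)$, everything reduces to proving the two bounds for a single parallel sum, with the \emph{same} constant $\sec^2(\alpha)$, and then integrating against $\tfrac{1+s}{s}\,d\mu$.

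For the lower bound of the parallel sum I would use the Schur‑complement/variational description. Form $\mathcal M=\begin{bmatrix}A&A\\ A&A+B\end{bmatrix}\in\mathcal M_{2n}$; its real part $\begin{bmatrix}\Re(A)&\Re(A)\\ \Re(A)&\Re(A)+\Re(B)\end{bmatrix}$ has Schur complement $\Re(B)\ge O$ relative to its invertible top‑left block $\Re(A)$, so it is positive semidefinite. For $v=\binom{x}{w}$ one has $\Re\langle\mathcal M v,v\rangle=\langle(\Re\mathcal M)v,v\rangle$, and the Schur complement of $\Re\mathcal M$ relative to its lower‑right block gives the minimum principle $\langle(\Re(A):\Re(B))x,x\rangle=\min_{w}\Re\langle\mathcal M\binom{x}{w},\binom{x}{w}\rangle$. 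Choosing $w_0=-(A+B)^{-1}Ax$ one checks the algebraic identity $\langle\mathcal M\binom{x}{w_0},\binom{x}{w_0}\rangle=\langle(A:B)x,x\rangle$ (the cross terms cancel because $(A+B)w_0=-Ax$), so $\langle\Re(A:B)x,x\rangle$ is one of the competitors in the minimum and therefore dominates it. This yields $\Re(A):\Re(B)\le\Re(A:B)$ with no loss of a sector constant, exactly as required.

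For the upper bound I would lean on the inversion inequalities of Lemma \ref{Copmarision bw R(f(A)) and f(R(A))}. Writing $S=A^{-1}+B^{-1}$ (which is again in $\Pi_{n,\alpha}^{+}$, sectoriality being preserved under inversion and sums), the left inequality gives $\Re(A:B)=\Re(S^{-1})\le(\Re S)^{-1}=\big(\Re(A^{-1})+\Re(B^{-1})\big)^{-1}$, while the right inequality in the form $\Re(A^{-1})\ge\cos^2(\alpha)(\Re A)^{-1}$ (and likewise for $B$) gives $\Re(A^{-1})+\Re(B^{-1})\ge\cos^2(\alpha)\big((\Re A)^{-1}+(\Re B)^{-1}\big)$; inverting (inversion reverses order) produces $\Re(A:B)\le\sec^2(\alpha)\,\big(\Re(A):\Re(B)\big)$. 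Feeding the two parallel‑sum bounds back through the representation above—the arithmetic terms $aA,bB$ being handled by linearity, with $a\,\Re(A)\le\sec^2(\alpha)\,a\,\Re(A)$ trivially since $\sec^2(\alpha)\ge1$—completes both inequalities for arbitrary $\sigma$. I expect the delicate point to be the lower bound: bounding $\Re(S^{-1})$ from below and $\Re(A^{-1})+\Re(B^{-1})$ from above through Lemma \ref{Copmarision bw R(f(A)) and f(R(A))} each costs a factor $\cos^2(\alpha)$ and so only yields $\Re(A:B)\ge\cos^2(\alpha)\,\big(\Re(A):\Re(B)\big)$; the variational argument is precisely what removes this spurious loss and delivers the sharp, constant‑free lower bound.
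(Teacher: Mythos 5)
This lemma is imported from \cite{bedrani2021positive}; the paper gives no proof of its own, so there is nothing internal to compare against. Your argument is correct and is essentially the route taken in that reference: reduce to the parallel sum via the Kubo--Ando representation, obtain the constant-free lower bound $\Re(A):\Re(B)\le\Re(A:B)$ from the variational/Schur-complement characterization (your choice $w_0=-(A+B)^{-1}Ax$ does give $\langle\mathcal{M}v,v\rangle=\langle A(A+B)^{-1}Bx,x\rangle=\langle (A:B)x,x\rangle$, so the competitor really achieves $\langle\Re(A:B)x,x\rangle$), and get the upper bound with $\sec^2(\alpha)$ from the two inversion inequalities of Lemma \ref{Copmarision bw R(f(A)) and f(R(A))}. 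The one step you should make explicit is that the integral representation
\[
A\sigma B=aA+bB+\int_{(0,\infty)}\tfrac{1+s}{s}\bigl((sA):B\bigr)\,d\mu(s)
\]
remains valid for the \emph{accretive} extension of $\sigma$ (the paper defines $A\sigma B$ for strictly accretive $A,B$ by \eqref{defintion of op mean}); this compatibility is itself established in \cite{bedrani2021positive}, so citing it closes the only gap. Your diagnosis of the delicate point is also right: chaining the inversion inequalities alone loses a factor $\cos^2(\alpha)$ on the lower bound, and the variational argument is what removes it.
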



In addition to the above lemmas, we have the following results about PPT matrices. The first result is usually referred to as  Hiroshima's inequality \cite{hiroshima2003majorization,lin2015hiroshima}.
\begin{lemma}\label{Hiroshima's ineq}
  Let \(A,B, X\in \mathcal{M}_n \) be such that  \(M=\begin{bmatrix}
    A & X\\
    X^* &  B
\end{bmatrix} \) is PPT. Then  \(\Vert X\Vert_u \leq \Vert A+B\Vert_u. \)  
\end{lemma}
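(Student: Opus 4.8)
The plan is to reduce the general off-diagonal block \(X\) to its positive semidefinite factor \(|X|=(X^*X)^{1/2}\) by a polar decomposition, and then to exploit the \emph{Hermitian}-block machinery of Lemma \ref{lemma_pos_bl}. The conceptual point I would stress at the outset is that positivity of \(M\) alone is \emph{not} enough for this bound (a non-Hermitian \(X\) can make the norm of the whole block exceed \(\|A+B\|_u\)); it is precisely the extra hypothesis \(M^{\tau}\geq O\) that lets me symmetrize and recover the estimate. Write \(X=W|X|\) with \(W\) unitary, so that \(W^*X=|X|=X^*W\) is positive semidefinite, and recall that \(\|X\|_u=\||X|\|_u\) since \(X\) and \(|X|\) have the same singular values.

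First I would congruence-transform each of the two positive blocks so that its off-diagonal entry becomes \(|X|\). Conjugating \(M\geq O\) by \(\operatorname{diag}(W,I)\) gives
\[\begin{bmatrix} W^*AW & |X|\\ |X| & B\end{bmatrix}\geq O,\]
while conjugating \(M^{\tau}\geq O\) by \(\operatorname{diag}(I,W)\) gives
\[\begin{bmatrix} A & |X|\\ |X| & W^*BW\end{bmatrix}\geq O.\]
Averaging these two positive matrices yields
\[N:=\begin{bmatrix} \frac{A+W^*AW}{2} & |X|\\[2pt] |X| & \frac{B+W^*BW}{2}\end{bmatrix}\geq O,\]
a positive block matrix whose off-diagonal block \(|X|\) is Hermitian and whose diagonal blocks sum to \(C:=\tfrac12\big((A+B)+W^*(A+B)W\big)\).

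Now Lemma \ref{lemma_pos_bl} applies to \(N\). By part (ii) there are unitaries \(U,V\in\mathcal{M}_{2n}\) with \(N=\tfrac12\big(UCU^*+VCV^*\big)\). Letting \(U_1,U_2\) (respectively \(V_1,V_2\)) denote the top and bottom \(n\times n\) blocks of \(U\) (respectively \(V\)) and reading off the \((1,2)\)-entry gives \(|X|=\tfrac12\big(U_1CU_2^*+V_1CV_2^*\big)\). Since \(U_1,U_2,V_1,V_2\) are sub-blocks of unitaries, their spectral norms are at most \(1\); so by the estimate \(\|PCQ^*\|_u\leq\|P\|\,\|Q\|\,\|C\|_u\) together with unitary invariance I obtain
\[\||X|\|_u\leq \|C\|_u\leq \tfrac12\|A+B\|_u+\tfrac12\|W^*(A+B)W\|_u=\|A+B\|_u,\]
which, via \(\|X\|_u=\||X|\|_u\), is exactly the claimed inequality.

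The main obstacle is the very first conceptual step: because the naive bound fails for positive but non-PPT matrices, the argument must use \(M^{\tau}\geq O\) in an essential way. The device that makes this work is combining the two congruences \(\operatorname{diag}(W,I)\) and \(\operatorname{diag}(I,W)\) so that the two unwanted conjugates \(W^*AW\) and \(W^*BW\) recombine into \(W^*(A+B)W\), whose unitarily invariant norm collapses back to \(\|A+B\|_u\). I would also need to justify the polar decomposition when \(X\) is singular (completing the partial isometry to a unitary \(W\)), but this is routine; alternatively one may first perturb \(A,B\) as in Remark \ref{rem_1} and pass to the limit. Everything else—the congruence computations and the block estimate extracted from Lemma \ref{lemma_pos_bl}—I expect to be straightforward.
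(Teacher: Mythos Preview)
The paper does not prove this lemma; it is quoted from \cite{hiroshima2003majorization,lin2015hiroshima}, so there is no in-paper argument to compare against.

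Your argument is correct and is in fact one of the standard routes to Hiroshima's inequality: the two congruences by $\operatorname{diag}(W,I)$ and $\operatorname{diag}(I,W)$ (using $M\geq O$ and $M^{\tau}\geq O$ respectively), followed by averaging, produce a positive block $N$ with Hermitian off-diagonal $|X|$, after which the Bourin--Lee--Lin machinery applies. One cosmetic point: in invoking Lemma~\ref{lemma_pos_bl}(ii) you write $N=\tfrac12(UCU^*+VCV^*)$ with $U,V\in\mathcal{M}_{2n}$ unitary and $C\in\mathcal{M}_n$, which is dimensionally inconsistent; the precise form is $N=\tfrac12\big(U(C\oplus O)U^*+V(C\oplus O)V^*\big)$ with $U,V\in\mathcal{M}_{2n}$ unitary (equivalently, one may take $U,V$ to be $2n\times n$ isometries). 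With that reading your block identity $|X|=\tfrac12(U_1CU_2^*+V_1CV_2^*)$, the bounds $\|U_i\|,\|V_i\|\leq 1$, and the contraction estimate $\|PCQ^*\|_u\leq\|P\|\,\|Q\|\,\|C\|_u$ go through verbatim. Incidentally, once $N\geq O$ is in hand, combining parts (iii) and (i) of Lemma~\ref{lemma_pos_bl} gives $2s_j(|X|)\leq s_j(N)$ together with $\|N\|_u\leq\|C\|_u\leq\|A+B\|_u$, yielding the sharper bound $2\|X\|_u\leq\|A+B\|_u$ that the paper itself records in \eqref{generalization of Hiroshima's ineq.} at $\alpha=0$.
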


\begin{lemma} \cite[Theorem 2.1]{gumus2022positive}\label{w-g-mean is PPT}
    Let \(A,B, X\in \mathcal{M}_n \) be such that \(A,B\geq O\). Then \(M= \begin{bmatrix}
    A & X\\
    X^* & B
\end{bmatrix}\) is PPT if and only if \(\begin{bmatrix}
    A\sharp_tB & X\\
    X^* & A\sharp_{1-t} B
\end{bmatrix} \) is PPT for all \(t\in [0,1]\). 
\end{lemma}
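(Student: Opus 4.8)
The plan is to translate the PPT condition on a block matrix into Schur-complement inequalities through Lemma~\ref{lemma_pos_inv}, and then to carry the inequalities produced by $M$ over to the ones required by the $\sharp_t$-averaged matrix by means of the congruence estimate \eqref{eq_conj_sig} together with the monotonicity of the geometric mean. I would first assume $A,B>O$, so that every inverse and every geometric mean below is well defined, and recover the positive semi-definite case at the very end by the perturbation $A\mapsto A+\varepsilon I$, $B\mapsto B+\varepsilon I$ with $\varepsilon\to0^{+}$, in the spirit of Remark~\ref{rem_1}; this is legitimate because the PPT cone is closed and because $\sharp_t$ and matrix inversion are continuous.

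The first step is to unpack the hypothesis. Applying Lemma~\ref{lemma_pos_inv} to both Schur complements of $M$ and of $M^{\tau}=\begin{bmatrix}A&X^{*}\\X&B\end{bmatrix}$, the assumption that $M$ is PPT is seen to be equivalent to the four inequalities
\[A\ge XB^{-1}X^{*},\qquad B\ge X^{*}A^{-1}X,\qquad A\ge X^{*}B^{-1}X,\qquad B\ge XA^{-1}X^{*},\]
where the first pair comes from $M\ge O$ and the second pair from $M^{\tau}\ge O$. The target matrix $N_{t}=\begin{bmatrix}A\sharp_{t}B&X\\X^{*}&A\sharp_{1-t}B\end{bmatrix}$ is, again by Lemma~\ref{lemma_pos_inv} applied to both its Schur complements and those of $N_{t}^{\tau}$, PPT precisely when
\[A\sharp_{t}B\ge X\,(A\sharp_{1-t}B)^{-1}X^{*}\qquad\text{and}\qquad A\sharp_{t}B\ge X^{*}(A\sharp_{1-t}B)^{-1}X,\]
after using the identity $(A\sharp_{1-t}B)^{-1}=A^{-1}\sharp_{1-t}B^{-1}$, which is immediate from the defining formula of the geometric mean.

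The heart of the argument is then to verify these two inequalities. For the first, I would apply \eqref{eq_conj_sig} with the matrix $X^{*}$ and with $A^{-1},B^{-1}$ in place of $A,B$, giving $X(A^{-1}\sharp_{1-t}B^{-1})X^{*}\le(XA^{-1}X^{*})\sharp_{1-t}(XB^{-1}X^{*})$, and then invoke $XA^{-1}X^{*}\le B$ and $XB^{-1}X^{*}\le A$ from the list above together with monotonicity, Lemma~\ref{w-g-mean of positive matrices}(iii), to bound the right-hand side by $B\sharp_{1-t}A$, which equals $A\sharp_{t}B$ by Lemma~\ref{w-g-mean of positive matrices}(ii). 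The second inequality is handled symmetrically, conjugating by $X$ and using instead $X^{*}A^{-1}X\le B$ and $X^{*}B^{-1}X\le A$. This establishes that $N_{t}$ is PPT for every $t\in[0,1]$. The converse is immediate on taking $t=0$, since then $A\sharp_{0}B=A$ and $A\sharp_{1}B=B$, so that $N_{0}=M$.

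I expect the main obstacle to be bookkeeping rather than depth: one must recognize that the full PPT hypothesis, namely all four Schur inequalities, is exactly what the two conjugations consume, and match them correctly (the $X^{*}$-conjugation needs $XA^{-1}X^{*}\le B$ and $XB^{-1}X^{*}\le A$, while the $X$-conjugation needs the transposed pair $X^{*}A^{-1}X\le B$ and $X^{*}B^{-1}X\le A$). A secondary point of care is the passage from the positive definite to the positive semi-definite setting, which should be justified by the continuity and closedness remarks above rather than taken for granted.
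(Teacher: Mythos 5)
Your argument is correct, and it is essentially the route the paper takes: the lemma itself is only cited (from \cite{gumus2022positive}), but the paper's proof of its APT generalization, Theorem \ref{w-g-mean is APT}, runs exactly your computation --- Schur complements via Lemma \ref{lemma_pos_inv}, the identity $(A\sharp_{1-t}B)^{-1}=A^{-1}\sharp_{1-t}B^{-1}$, the congruence bound \eqref{eq_conj_sig}, and monotonicity plus $B\sharp_{1-t}A=A\sharp_t B$ --- applied to the real parts. Your bookkeeping of the four Schur inequalities and the limiting argument for the semi-definite case are both sound.
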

The APT version of Lemma \ref{w-g-mean is PPT} is stated in Theorem \ref{w-g-mean is APT} below. 

\begin{lemma} \cite[Theorem 2.1, Corollary 2.1]{sababheh2024further}\label{Absolute value of X ineq}
 Let \(A,B, X\in \mathcal{M}_n \) be such that \(M= \begin{bmatrix}
    A & X\\
    X^* & B
\end{bmatrix}\) is PPT, and  let \(X= U \vert X\vert\) be the polar decomposition of \(X\). Then for any \(0\leq t\leq 1\), \[\vert X\vert \leq \Big(A\sharp_t\, (U^* B U) \Big) \sharp \Big(  A\sharp_{1-t}\,(U^* B U)  \Big),\]
and
\[\vert X^*\vert \leq \Big( (U AU^*)\sharp_{t}\, B \Big)\sharp \Big((UAU^*)\sharp_{1-t}\,B \Big). \]
Furthermore,
 \[\vert X\vert \leq \Big(A\sharp_t\, B \Big) \sharp \Big( U^* (A\sharp_{1-t}\, B) U  \Big),\]
and
\[\vert X^*\vert \leq \Big( U (A\sharp_{t}\, B) U^*\Big)\sharp \Big(A\sharp_{1-t}\,B \Big). \]
\end{lemma}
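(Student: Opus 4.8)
The plan is to reduce all four inequalities to a single mechanism built from two ingredients. First I would fix the polar decomposition $X=U|X|$ with $U$ \emph{unitary} (always possible for a square $X$, after extending the partial isometry on $\operatorname{ran}|X|$ to all of $\mathbb{C}^n$), and record the identities $|X^*|=U|X|U^*$, $\,|X|=U^*|X^*|U$, and $X^*=U^*|X^*|$, so that $UX^*=|X^*|=XU^*$ and $U^*X=|X|=X^*U$. Second, I would isolate the elementary observation that a \emph{positive block matrix with Hermitian off-diagonal block is automatically PPT}: if $H=H^*$ and $\left[\begin{smallmatrix}P&H\\H&Q\end{smallmatrix}\right]\ge O$, then its partial transpose coincides with itself, so it is PPT; moreover such an $H$ obeys the maximality bound $H\le P\,\sharp\,Q$. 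This last bound is the crux and follows from Lemma \ref{lemma_pos_inv}: positivity gives $P\ge HQ^{-1}H$, and writing $G=P\,\sharp\,Q$ (which solves $GQ^{-1}G=P$), a congruence by $Q^{-1/2}$ converts this into $G'^2=P'\ge H'^2$ with $G'\ge O$, whence $H'\le|H'|\le G'$ by operator monotonicity of the square root; undoing the congruence yields $H\le P\,\sharp\,Q$. The passage from $P,Q>O$ to $P,Q\ge O$ is handled by the usual $+\varepsilon I$ regularization and continuity, as in Remark \ref{rem_1}.

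With these in hand, I would prove the bound for $|X^*|$ first. Conjugating $M^{\tau}=\left[\begin{smallmatrix}A&X^*\\X&B\end{smallmatrix}\right]\ge O$ by the unitary $\operatorname{diag}(U^*,I)$ and using $UX^*=|X^*|=XU^*$ produces
\[\begin{bmatrix}UAU^* & |X^*|\\ |X^*| & B\end{bmatrix}\ge O,\]
whose off-diagonal block is Hermitian, so this block matrix is PPT. Applying Lemma \ref{w-g-mean is PPT} interpolates the corners, hence $\left[\begin{smallmatrix}(UAU^*)\sharp_t B & |X^*|\\ |X^*| & (UAU^*)\sharp_{1-t}B\end{smallmatrix}\right]$ is PPT, in particular positive; the maximality bound then gives $|X^*|\le\big((UAU^*)\sharp_t B\big)\sharp\big((UAU^*)\sharp_{1-t}B\big)$, the second displayed inequality of the statement.

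Finally I would derive the remaining three inequalities by symmetry operations that cost nothing. The first inequality follows from the second by congruence with $U$: conjugating by $U^*(\cdot)U$ sends $|X^*|$ to $|X|$ and, by unitary congruence-invariance of the weighted geometric mean, $U^*(P\sharp_s Q)U=(U^*PU)\sharp_s(U^*QU)$, sends $UAU^*$ to $A$ and $B$ to $U^*BU$, yielding $|X|\le\big(A\sharp_t(U^*BU)\big)\sharp\big(A\sharp_{1-t}(U^*BU)\big)$. The two ``Furthermore'' inequalities arise from the same template with the order of operations exchanged: first apply Lemma \ref{w-g-mean is PPT} to $M^{\tau}$ to obtain the PPT matrix $\left[\begin{smallmatrix}A\sharp_t B & X^*\\ X & A\sharp_{1-t}B\end{smallmatrix}\right]$, then conjugate by $\operatorname{diag}(U^*,I)$ and invoke maximality to get $|X^*|\le\big(U(A\sharp_t B)U^*\big)\sharp\big(A\sharp_{1-t}B\big)$; a further congruence by $U$ produces the companion bound for $|X|$. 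Throughout, replacing $t$ by $1-t$ and using $A\sharp_{1-t}B=B\sharp_t A$ from Lemma \ref{w-g-mean of positive matrices} together with the symmetry of $\sharp$ matches the exact index placement in the statement. I expect the main obstacle to be essentially bookkeeping: arranging the polar factor so that each conjugation leaves a \emph{Hermitian} off-diagonal block (which both legitimizes Lemma \ref{w-g-mean is PPT} and activates the maximality bound), and tracking whether $A$ or $B$ gets conjugated by $U$ or $U^*$ in each of the four cases.
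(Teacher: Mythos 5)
This lemma is not proved in the paper at all: it is imported verbatim from \cite{sababheh2024further} as a quoted result, so there is no in-paper argument to compare against. Judged on its own, your proof is correct and self-contained. Both pillars hold up: the extremal property $H\le P\,\sharp\,Q$ for a Hermitian off-diagonal block of a positive $2\times2$ block matrix is exactly Ando's maximality characterization of the geometric mean, and your derivation of it from Lemma \ref{lemma_pos_inv} via the Riccati identity $G Q^{-1}G=P$ and a congruence by $Q^{-1/2}$ is the standard one, with the $+\varepsilon I$ regularization correctly invoked for the semidefinite case. The conjugation bookkeeping also checks out: with $U$ unitary, $UX^*=XU^*=|X^*|$, so conjugating $M^{\tau}$ by $\operatorname{diag}(U^*,I)$ yields $\left[\begin{smallmatrix}UAU^*&|X^*|\\|X^*|&B\end{smallmatrix}\right]\ge O$ with Hermitian corner; Lemma \ref{w-g-mean is PPT} interpolates the diagonal, maximality gives the second inequality, and the order-preserving congruence by $U$ (which commutes with $\sharp_s$) transports it to the first, while the ``Furthermore'' pair follows from the same template with interpolation performed before the conjugation. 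The only points worth making explicit in a write-up are that the polar factor of a square matrix can indeed be chosen unitary (needed for $U^*BU$ to make sense as stated), and that Lemma \ref{w-g-mean is PPT} is being applied to diagonal blocks $UAU^*,B\ge O$ that need not be definite, so the geometric means are understood in the limiting sense consistent with the paper's conventions; neither is a gap.
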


\begin{lemma}\cite[Corollary 2.2]{sababheh2024further} \label{eigenvalues of the abs value of X}
    Let \(M= \begin{bmatrix}
    A & X\\
    X^* & B
\end{bmatrix}\) be PPT with \(A,B, X\in \mathcal{M}_n \). Then for \(j= 1,2..., n\)  and \(0\leq t\leq 1\),
\[\lambda_j(2\vert X\vert - A\sharp_t B)\leq \lambda_j(A \sharp_{1-t} B).\] 
\end{lemma}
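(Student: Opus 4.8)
The plan is to deduce the eigenvalue inequality from a single Löwner (positive semidefinite) ordering, after which the conclusion follows at once from Weyl's monotonicity principle. Writing $X = U|X|$ for the polar decomposition of $X$, I would first invoke the last bound in Lemma \ref{Absolute value of X ineq}, namely
\[
|X| \leq (A\sharp_t B)\sharp\bigl(U^*(A\sharp_{1-t}B)U\bigr),
\]
which is available precisely because $M$ is assumed PPT. The point of selecting this particular form is that one slot of the outer geometric mean already carries $A\sharp_t B$, which is the quantity subtracted in the statement.

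Next I would feed this into the matrix arithmetic--geometric mean inequality $P\sharp Q \leq \tfrac12(P+Q)$, valid for all $P,Q\geq O$; this follows by applying the scalar estimate $\sqrt{x}\leq \tfrac{1+x}{2}$ to $P^{-1/2}QP^{-1/2}$ and conjugating by $P^{1/2}$, together with the usual limiting argument for semidefinite matrices. This turns the previous display into
\[
|X| \leq \tfrac12\Bigl((A\sharp_t B) + U^*(A\sharp_{1-t}B)U\Bigr).
\]
Multiplying by $2$ and moving the term $A\sharp_t B$ to the left-hand side (the Löwner order is translation invariant) gives the single operator inequality
\[
2|X| - A\sharp_t B \leq U^*(A\sharp_{1-t}B)U.
\]

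Finally, Weyl's monotonicity theorem asserts that $P\leq Q$ forces $\lambda_j(P)\leq\lambda_j(Q)$ for every $j$ (with eigenvalues arranged in decreasing order), and since $U$ is unitary the right-hand side is isospectral to $A\sharp_{1-t}B$. Note that $2|X|-A\sharp_t B$ is Hermitian, so its eigenvalues are real and $\lambda_j$ is meaningful. Applying monotonicity to the last display yields
\[
\lambda_j\bigl(2|X| - A\sharp_t B\bigr) \leq \lambda_j\bigl(U^*(A\sharp_{1-t}B)U\bigr) = \lambda_j(A\sharp_{1-t}B),
\]
which is exactly the claim, uniformly in $t\in[0,1]$ and $j=1,\dots,n$.

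The only genuinely delicate step is the very first one. The bound furnished by Lemma \ref{Absolute value of X ineq} comes in several flavors, and the argument collapses cleanly only for the version in which one slot of the outer geometric mean carries $A\sharp_t B$ and the other carries the unitary conjugate $U^*(A\sharp_{1-t}B)U$. With any other pairing the subsequent arithmetic--geometric step would not leave exactly the difference $2|X| - A\sharp_t B$ demanded by the statement, and the unitary conjugation would fail to cancel at the spectral level. Once the correct inequality is isolated, the remaining manipulations are routine.
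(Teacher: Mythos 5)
Your argument is correct: the chain $|X|\leq (A\sharp_t B)\sharp\bigl(U^*(A\sharp_{1-t}B)U\bigr)\leq \tfrac12\bigl(A\sharp_t B+U^*(A\sharp_{1-t}B)U\bigr)$, followed by translation and Weyl's monotonicity principle, is exactly the route the paper itself takes when proving the APT analogue in Corollary \ref{e.values of (2|X|-R(w-g-mean)) and the s.values of (w-g-mean)} (the paper only cites this PPT lemma, but its internal proof of the accretive version is the same argument applied to real parts). No gaps; you correctly identified that the ``Furthermore'' variant of Lemma \ref{Absolute value of X ineq}, with $A\sharp_t B$ unconjugated, is the one needed.
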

We refer the reader to Corollary \ref{absolute value of X+Y ineq}, Corollary \ref{absolute value of X+Y ineq-2} and Corollary \ref{e.values of (2|X|-R(w-g-mean)) and the s.values of (w-g-mean)} for the APT versions of  Lemma \ref{Absolute value of X ineq} and Lemma \ref{eigenvalues of the abs value of X}.
 \begin{lemma} \cite[Theorem 2.3]{gumus2022positive}\label{positivity of sigma(M)}
     Let \(M= \begin{bmatrix}
    A & X\\
    X^* & B
\end{bmatrix}\) be PPT with \(A,B, X\in \mathcal{M}_n, \) and let \(\sigma\) be a matrix mean. Then 
\(\begin{bmatrix}
    A\sigma^* B & X\\
    X^* & B \sigma A
\end{bmatrix} \) is PPT. 
 \end{lemma}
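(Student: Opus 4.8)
The plan is to check the two defining conditions of a PPT matrix separately for $N:=\begin{bmatrix} A\sigma^* B & X\\ X^* & B\sigma A\end{bmatrix}$, namely $N\geq O$ and $N^{\tau}\geq O$, reducing each to a single operator inequality via the Schur complement criterion of Lemma \ref{lemma_pos_inv}. First I would remove the invertibility issue: since replacing $A,B$ by $A+\varepsilon I,B+\varepsilon I$ leaves both $M$ and $M^{\tau}$ positive semidefinite, the limiting device of Remark \ref{rem_1} lets me assume $A,B>O$ and recover the general case as $\varepsilon\to 0$, using continuity of the means on positive definite matrices.

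Next I would extract the four inequalities encoded in the positivity of $M$ and $M^{\tau}$. From $M\geq O$, Lemma \ref{lemma_pos_inv} gives $XB^{-1}X^*\leq A$, and conjugating $M$ by the flip $\begin{bmatrix} O & I\\ I & O\end{bmatrix}$ and applying the same lemma gives $X^*A^{-1}X\leq B$; likewise $M^{\tau}\geq O$ yields $X^*B^{-1}X\leq A$ and $XA^{-1}X^*\leq B$. The second ingredient is the identity $(B\sigma A)^{-1}=B^{-1}\sigma^* A^{-1}$, which is immediate from the definition $A\sigma^* B=(A^{-1}\sigma B^{-1})^{-1}$ of the adjoint mean. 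Finally I would note that $\sigma^*$ is itself a matrix mean (its representing function $x\mapsto 1/f(1/x)$ lies in $\mathfrak{m}$), so the congruence inequality \eqref{eq_conj_sig} and the monotonicity of matrix means (the geometric instance being Lemma \ref{w-g-mean of positive matrices}(iii), the general case standard by \cite{kubo1980means}) are both available for $\sigma^*$.

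The core of the argument is then a short chain for each condition. For $N\geq O$, Lemma \ref{lemma_pos_inv} reduces the claim to $A\sigma^* B\geq X(B\sigma A)^{-1}X^*=X(B^{-1}\sigma^* A^{-1})X^*$. Applying \eqref{eq_conj_sig} with $X^*$ in the role of $X$ moves the congruence inside the mean, $X(B^{-1}\sigma^* A^{-1})X^*\leq (XB^{-1}X^*)\sigma^*(XA^{-1}X^*)$, and then $XB^{-1}X^*\leq A$ together with $XA^{-1}X^*\leq B$ and monotonicity of $\sigma^*$ give $(XB^{-1}X^*)\sigma^*(XA^{-1}X^*)\leq A\sigma^* B$, closing the chain. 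The condition $N^{\tau}\geq O$ is handled identically: it reduces to $A\sigma^* B\geq X^*(B^{-1}\sigma^* A^{-1})X$, and the same congruence-then-monotonicity chain now consumes the remaining two inequalities $X^*B^{-1}X\leq A$ and $X^*A^{-1}X\leq B$.

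The step demanding the most care is aligning the directions: the congruence inequality and the monotonicity of the mean must combine so that the upper bound produced is exactly $A\sigma^* B$, which dictates the precise pairing of which inequalities---those coming from $M$ versus those from $M^{\tau}$---feed the bound for $N$ and which feed the bound for $N^{\tau}$. One must also be vigilant that inverting $B\sigma A$ produces the adjoint mean $\sigma^*$ rather than $\sigma$, and consequently that every congruence and monotonicity fact is invoked for $\sigma^*$.
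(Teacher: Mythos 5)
Your proposal is correct, and it is essentially the argument the paper itself uses: the paper states this lemma without proof (citing \cite[Theorem 2.3]{gumus2022positive}), but its proof of the APT extension, Theorem \ref{accretivity of sigma(M)}, is exactly your chain --- Schur-complement reduction via Lemma \ref{lemma_pos_inv}, the identity $(B\sigma A)^{-1}=B^{-1}\sigma^{*}A^{-1}$, the congruence inequality \eqref{eq_conj_sig}, and monotonicity of the mean --- applied to the real parts. Your pairing of the four Schur-complement inequalities (those from $M$ versus those from $M^{\tau}$) with the two positivity claims for $N$ and $N^{\tau}$ is the right one, and the $\varepsilon$-regularization correctly handles the semidefinite case.
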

  
\begin{lemma} \cite[Theorem 2.8]{gumus2022positive}\label{positivity of f(M)}
    Let \(f:[0,\infty] \longrightarrow (0,\infty)\) be an operator concave function and let \(\begin{bmatrix}
        f(A)  & X\\
        X^* & f(B)
    \end{bmatrix}\) be PPT. Then \(\begin{bmatrix}
        f(A)\nabla_t f(B) & X\\
        X^* & f(A \nabla_tB )
    \end{bmatrix}\) is PPT for any \(0\leq t\leq 1\).
\end{lemma}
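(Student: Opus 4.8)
The plan is to track only the two positive semi-definite conditions that define the PPT property and to upgrade the diagonal entries at the very end by means of operator concavity. Put $P=f(A)$ and $Q=f(B)$, so the hypothesis says that $\begin{bmatrix} P & X \\ X^* & Q \end{bmatrix}$ and its partial transpose $\begin{bmatrix} P & X^* \\ X & Q \end{bmatrix}$ are both positive. First I would conjugate each of these by the unitary flip $\begin{bmatrix} O & I \\ I & O \end{bmatrix}$, an operation already recorded in the introduction; this produces the two further positive matrices $\begin{bmatrix} Q & X \\ X^* & P \end{bmatrix}$ and $\begin{bmatrix} Q & X^* \\ X & P \end{bmatrix}$, so that I have both orderings of the diagonal pair $(P,Q)$ available, for $M$ and for $M^{\tau}$ simultaneously.

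Next I would form the convex combination with weights $1-t$ and $t$. Combining $\begin{bmatrix} P & X \\ X^* & Q \end{bmatrix}$ and $\begin{bmatrix} Q & X \\ X^* & P \end{bmatrix}$ with these weights leaves $X$ untouched and replaces the diagonal by $(1-t)P+tQ=P\nabla_t Q$ and $(1-t)Q+tP=P\nabla_{1-t}Q$; as a convex combination of positive matrices it is positive. Forming the identical combination of the two partial-transposed matrices shows that the partial transpose is positive as well. Hence $\begin{bmatrix} P\nabla_t Q & X \\ X^* & P\nabla_{1-t}Q \end{bmatrix}$ is PPT. This is exactly the arithmetic-mean counterpart of Lemma \ref{w-g-mean is PPT}, and it already displays the characteristic conjugate weights $t$ and $1-t$ on the two diagonal blocks.

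The decisive step is to trade the mean of values $f(A)\nabla_{1-t}f(B)=P\nabla_{1-t}Q$ in the lower-right corner for the value at the mean, $f(A\nabla_{1-t}B)$. Operator concavity of $f$ supplies the Jensen inequality $f(A\nabla_{1-t}B)\ge f(A)\nabla_{1-t}f(B)$, and enlarging a Hermitian diagonal block preserves both positivity and positivity of the partial transpose, since the increment $\begin{bmatrix} O & O \\ O & \Delta \end{bmatrix}$ with $\Delta=f(A\nabla_{1-t}B)-P\nabla_{1-t}Q\ge O$ is positive and equals its own partial transpose. This yields that $\begin{bmatrix} f(A)\nabla_t f(B) & X \\ X^* & f(A\nabla_{1-t}B) \end{bmatrix}$ is PPT, which is the asserted block after the standard $t\leftrightarrow 1-t$ relabeling of the weight, the same symmetry already visible in Lemma \ref{w-g-mean is PPT}. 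I expect the concavity/enlargement step to carry all the content: the flip and the convex combination are formal, whereas matching the weight inside $f(A\nabla_{\bullet}B)$ to the weight produced by the convex combination is precisely where operator concavity is genuinely used, and is the point at which one must verify that the index in the $(2,2)$ entry is the conjugate $1-t$ of the index $t$ appearing in the $(1,1)$ entry rather than the same one.
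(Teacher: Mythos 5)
The paper itself offers no proof to compare against here: this lemma is quoted verbatim from \cite{gumus2022positive}, so your argument stands or falls on its own. It falls at exactly the point you yourself flagged as needing verification. Everything up to the last sentence is sound: the flip conjugations, the convex combination (which unavoidably places the \emph{conjugate} weights $t$ and $1-t$ on the two diagonal corners, just as in Lemma \ref{w-g-mean is PPT}), and the concavity enlargement $f(A\nabla_{1-t}B)\ge f(A)\nabla_{1-t}f(B)$ with the PPT-preserving increment $\begin{bmatrix} O & O\\ O & \Delta\end{bmatrix}$. What you have therefore proved is that $\begin{bmatrix} f(A)\nabla_t f(B) & X\\ X^* & f(A\nabla_{1-t}B)\end{bmatrix}$ is PPT. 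But the closing claim that this ``is the asserted block after the standard $t\leftrightarrow 1-t$ relabeling'' is wrong: substituting $t\mapsto 1-t$ changes \emph{both} diagonal entries at once, yielding $\begin{bmatrix} f(A)\nabla_{1-t}f(B) & X\\ X^* & f(A\nabla_{t}B)\end{bmatrix}$, and no relabeling ever produces the same-weight block $\begin{bmatrix} f(A)\nabla_t f(B) & X\\ X^* & f(A\nabla_t B)\end{bmatrix}$ of the statement. Moreover, this gap cannot be closed, because the same-weight statement is false as printed: take $n=1$, $f(x)=x+1$ (affine, hence operator concave and positive on $[0,\infty)$), $A=0$, $B=3$, $X=2$. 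The hypothesis block is $\begin{bmatrix} 1 & 2\\ 2 & 4\end{bmatrix}$, which is PPT, while at $t=0$ the asserted block is $\begin{bmatrix} f(A) & X\\ X^* & f(A)\end{bmatrix}=\begin{bmatrix} 1 & 2\\ 2 & 1\end{bmatrix}$, with determinant $-3$.

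So your instinct about where the content lies was exactly right, and the honest output of your proof --- with $f(A\nabla_{1-t}B)$, equivalently $f(B\nabla_t A)$, in the $(2,2)$ corner --- is the correct statement; it is also the one consistent with the conjugate-weight pattern of Lemma \ref{w-g-mean is PPT} ($\sharp_t$ paired with $\sharp_{1-t}$) and Lemma \ref{positivity of sigma(M)} ($A\sigma^*B$ paired with $B\sigma A$). The Schur-complement route (the one this paper uses for its APT extension) leads to the same conclusion: from $Xf(B)^{-1}X^*\le f(A)$ and $Xf(A)^{-1}X^*\le f(B)$ together with operator convexity of the inverse, one gets $X\bigl(f(A\nabla_{1-t}B)\bigr)^{-1}X^*\le tf(A)+(1-t)f(B)$, i.e.\ again the conjugate weights --- so the same-weight form is not a quirk of your method but a misstatement. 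Be aware that the misprint propagates: the proof of Theorem \ref{accretivity of f(M)} invokes this lemma with equal weights at inequality \eqref{arith. mean ineq}, and the scalar example above defeats that inequality too, so the corrected conjugate-weight version (and a corresponding adjustment of Theorem \ref{accretivity of f(M)}) is what your argument --- which is otherwise a pleasantly elementary, Schur-complement-free proof --- actually establishes.
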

The APT versions of Lemma \ref{positivity of sigma(M)} and Lemma \ref{positivity of f(M)} are stated in Theorem \ref{accretivity of sigma(M)} and Theorem \ref{accretivity of f(M)} below.

Finally, we state some results for APT matrices.
\begin{lemma}\label{Zhang-Norm of M vs Norm of the diagonal entries}
    Let \(M=\begin{bmatrix}
        A & X\\
        Y^*& B
    \end{bmatrix}\) be APT with \(A,B\in\Gamma_n^{+}, X, Y\in \mathcal{M}_n \). 
    \begin{enumerate}[(i)]
        \item \cite[Theorem 3.5]{zhang2015matrix}   If \(M\in\Pi_{2n,\alpha}\) for some \(\alpha\in [0,\pi/2)\), then 
    \(\Vert M\Vert_u \leq \sec(\alpha) \Vert A+B\Vert_u.\)
    \item \label{g-m is APT}\cite{liu2021inequalities} The block matrix 
    $\begin{bmatrix}
        A\sharp B & X\\
        Y^*& A\sharp B
    \end{bmatrix}$  is also APT.
    \end{enumerate}
  
\end{lemma}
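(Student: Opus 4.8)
The plan is to treat the two parts separately, but both will rest on a single observation: $M$ is APT if and only if its real part is PPT. Writing $W=\tfrac12(X+Y)$, a direct computation gives
\[
\Re M=\begin{bmatrix}\Re A & W\\ W^* & \Re B\end{bmatrix},\qquad \Re(M^{\tau})=\begin{bmatrix}\Re A & W^*\\ W & \Re B\end{bmatrix}=(\Re M)^{\tau},
\]
so $\Re(\cdot)$ and the partial transpose commute. Hence ``$M$ and $M^{\tau}$ accretive'' says exactly that $\Re M$ and $(\Re M)^{\tau}$ are positive semi-definite, i.e. that the Hermitian block matrix $\Re M$ is PPT. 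This reformulation is what will let me import the PPT machinery of Section 2.

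For part (i), I would first reduce to the strictly accretive case $M\in\Pi_{2n,\alpha}^{+}$ via the limiting device of Remark \ref{rem_1}, noting that $M+\tfrac1kI$ stays in $\Pi_{2n,\alpha}$ and has strictly accretive diagonal blocks. With $\Re M>O$ I would invoke the sectorial decomposition $M=(\Re M)^{1/2}(I+iK)(\Re M)^{1/2}$, where $K$ is Hermitian: the sectoriality $\pm\Im M\le\tan(\alpha)\,\Re M$ forces $\|K\|\le\tan\alpha$, and since $I+iK$ is normal with eigenvalues $1+i\mu$, $|\mu|\le\tan\alpha$, one gets $\|I+iK\|\le\sec\alpha$. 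The key norm step is then $\|M\|_u\le\|I+iK\|\,\|\Re M\|_u\le\sec(\alpha)\|\Re M\|_u$, which I would obtain from the unitarily invariant norm inequality $\|PCP\|_u\le\|C\|\,\|P^2\|_u$ for $P\ge O$. Finally $\Re M$ is a positive semi-definite block matrix with diagonal blocks $\Re A,\Re B$, so Lemma \ref{lemma_pos_bl}(i) yields $\|\Re M\|_u\le\|\Re A+\Re B\|_u=\|\Re(A+B)\|_u\le\|A+B\|_u$; combining the two estimates gives the claim.

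For part (ii), I start from the fact (just established) that $\Re M=\begin{bmatrix}\Re A & W\\ W^* & \Re B\end{bmatrix}$ is PPT. Applying the PPT geometric-mean statement of Lemma \ref{w-g-mean is PPT} with $t=\tfrac12$ shows $\begin{bmatrix}\Re A\sharp\Re B & W\\ W^* & \Re A\sharp\Re B\end{bmatrix}$ is PPT. Now Lemma \ref{accretivity of A sigma B} (with $\sigma=\sharp$) gives $\Re A\sharp\Re B\le\Re(A\sharp B)$, so $D:=\Re(A\sharp B)-\Re A\sharp\Re B\ge O$. Adding the positive semi-definite perturbation $\text{diag}(D,D)$ leaves both the block matrix and its partial transpose positive, whence $\begin{bmatrix}\Re(A\sharp B) & W\\ W^* & \Re(A\sharp B)\end{bmatrix}$ is PPT. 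But this matrix is precisely $\Re N$ for $N=\begin{bmatrix}A\sharp B & X\\ Y^* & A\sharp B\end{bmatrix}$, and by the opening reformulation ``$\Re N$ PPT'' means $N$ is APT, as desired.

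The main obstacle will be the norm step in part (i): upgrading the scalar geometric fact $|z|\le\sec(\alpha)\,\Re z$ for $z\in S_\alpha$ to the matrix inequality $\|M\|_u\le\sec(\alpha)\|\Re M\|_u$. The naive pointwise analogue $s_j(M)\le\sec(\alpha)\lambda_j(\Re M)$ is \emph{false}, so one must argue at the level of weak majorization: through the sectorial decomposition together with $\|PCP\|_u\le\|C\|\,\|P^2\|_u$, the latter itself following from Horn's log-majorization $s(PCP)\prec_{\log}\|C\|\,s(P^2)$ and Fan dominance. Pinning down this last inequality carefully is where the real work lies. Part (ii), by contrast, is essentially bookkeeping once the APT$\,\Leftrightarrow\,$PPT-of-$\Re M$ reformulation and the monotone enlargement of the diagonal from $\Re A\sharp\Re B$ to $\Re(A\sharp B)$ are in place.
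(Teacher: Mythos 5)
Your proposal is correct, but note that the paper does not prove this lemma at all: it is imported verbatim from the literature, part (i) from \cite[Theorem 3.5]{zhang2015matrix} and part (ii) from \cite{liu2021inequalities}. What you have done is reconstruct self-contained proofs, and the comparison is therefore with those sources and with the paper's related internal arguments. For part (i), your decomposition $M=(\Re M)^{1/2}(I+iK)(\Re M)^{1/2}$ with $K$ Hermitian and $\|K\|\le\tan\alpha$ is precisely Zhang's matrix decomposition from the cited reference, so your route (decomposition, then $\|PCP\|_u\le\|C\|\,\|P^2\|_u$ via Horn's log-majorization and Fan dominance, then Lemma \ref{lemma_pos_bl}(i) together with $\|\Re(A+B)\|_u\le\|A+B\|_u$) is essentially the original proof; the regularization via Remark \ref{rem_1} correctly handles $\Re M$ being only semi-definite, since $M+\tfrac1k I$ stays in $\Pi_{2n,\alpha}$. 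For part (ii), your opening observation that $\Re$ commutes with the partial transpose, so that ``$M$ APT $\Leftrightarrow$ $\Re M$ PPT,'' is exactly the mechanism the paper itself uses later to prove the stronger Theorem \ref{w-g-mean is APT} (there for general $t$, arguing through Schur complements and Lemma \ref{lemma_pos_inv}); your shortcut of invoking Lemma \ref{w-g-mean is PPT} at $t=\tfrac12$ and then padding the diagonal by $\mathrm{diag}(D,D)$ with $D=\Re(A\sharp B)-\Re A\sharp\Re B\ge O$ (from Lemma \ref{accretivity of A sigma B}) is a clean and valid alternative to that Schur-complement chain. One small point there: Lemma \ref{accretivity of A sigma B} requires $A,B\in\Pi_{n,\alpha}^{+}$ for a common $\alpha$, which exists because $A,B\in\Gamma_n^{+}$, but this deserves a word.

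One genuine slip, though it does not damage the proof: your claim that the pointwise inequality $s_j(M)\le\sec(\alpha)\,\lambda_j(\Re M)$ is \emph{false} is itself wrong. For $M$ with $W(M)\subseteq S_\alpha$ this is a theorem of Drury and Lin \cite{drury2014singular}. The pointwise estimate $s_j(PCP)\le\|C\|\,s_j(P^2)$ does fail for a general contraction-scaled $C$ (e.g.\ $P=\mathrm{diag}(1,\epsilon)$ and $C$ the flip matrix), but such $C$ has $\Re C\ne I$, i.e.\ $PCP$ is not sectorial, so this does not contradict the sectorial case. Your weak-majorization argument remains valid and self-contained; in fact, citing Drury--Lin would let you replace it by a one-line pointwise estimate. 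Either delete the falsity remark or correct it to say that the pointwise bound fails without the sectorial hypothesis.
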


\section{Main Results}
In this section, we present our results, where we extend some of the known properties and inequalities in the PPT case to the APT case.\\
We point out that upon letting $\alpha=0$, all stated results for APT matrices below reduce to known scenarios for PPT matrices.\\

Before proceeding, we state and prove the following version of Lemma \ref{lemma_pos_inv} for accretive blocks. The motivation of this result is the observation that $A\geq XB^{-1}X^*$ is equivalent to $A-XB^{-1}X^{*}\geq O.$ We point out that the first assertion of this theorem was shown in \cite{drury2014singular}.

\begin{theorem}\label{thm_accretive_equiv}
    Let $A,B\in\Gamma_n^{+}, X\in\mathcal{M}_n.$ If $M=\begin{bmatrix}
    A & X\\
    X^* &  B
\end{bmatrix}\in\Gamma_n, $ then $A-XB^{-1}X^*\in\Gamma_n.$ \\
On the other hand, if $B\in\Pi_{n,\alpha}$ for some $\alpha\in [0,\pi/2),$ is such that $A-XB^{-1}X^*\in\Gamma_n,$ then $\left[\begin{matrix}A&\cos\alpha X\\\cos\alpha X^*&B\end{matrix}\right]\in\Gamma_n.$
\end{theorem}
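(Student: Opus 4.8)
The plan is to reduce both implications to the positive-definite Schur-complement criterion of Lemma~\ref{lemma_pos_inv}, applied not to the blocks themselves but to their Hermitian parts. The starting observation is that passing to the real part respects the block structure: since the two off-diagonal entries $X$ and $X^*$ are already conjugate to one another, one has
\[
\Re(M)=\begin{bmatrix}\Re(A)&X\\X^*&\Re(B)\end{bmatrix},
\]
and, by definition, $M\in\Gamma_n$ is the same as $\Re(M)\geq O$. Because $A,B\in\Gamma_n^{+}$ forces $\Re(A),\Re(B)>O$ (and in particular makes $B$ invertible), Lemma~\ref{lemma_pos_inv} converts $\Re(M)\geq O$ into the single operator inequality $\Re(A)\geq X(\Re B)^{-1}X^*$.

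The second ingredient I would isolate is the identity
\[
\Re\!\left(XB^{-1}X^*\right)=X\,\Re(B^{-1})\,X^*,
\]
which holds because $(XB^{-1}X^*)^*=X(B^*)^{-1}X^*$, so that $\Re(B^{-1})=\tfrac12\big(B^{-1}+(B^*)^{-1}\big)$ can be factored out between $X$ and $X^*$. Consequently $A-XB^{-1}X^*\in\Gamma_n$ is precisely the statement $\Re(A)\geq X\,\Re(B^{-1})\,X^*$, and the whole theorem collapses to a comparison between $X\,\Re(B^{-1})\,X^*$ and $X(\Re B)^{-1}X^*$. That comparison is exactly what Lemma~\ref{Copmarision bw R(f(A)) and f(R(A))} provides, in the form $\Re(B^{-1})\leq(\Re B)^{-1}\leq\sec^2(\alpha)\,\Re(B^{-1})$; since conjugation $Y\mapsto XYX^*$ preserves the L\"owner order, these turn directly into inequalities between the two quadratic expressions.

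For the forward implication I would then argue as follows. From $M\in\Gamma_n$ the first paragraph gives $\Re(A)\geq X(\Re B)^{-1}X^*$, while the left half of Lemma~\ref{Copmarision bw R(f(A)) and f(R(A))}, namely $\Re(B^{-1})\leq(\Re B)^{-1}$, gives $X\,\Re(B^{-1})\,X^*\leq X(\Re B)^{-1}X^*$. Chaining the two yields $\Re(A)\geq X\,\Re(B^{-1})\,X^*=\Re(XB^{-1}X^*)$, which is exactly $A-XB^{-1}X^*\in\Gamma_n$. This direction uses no restriction on the sector angle, consistent with its being the general fact recorded in \cite{drury2014singular}. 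For the converse I would begin from $A-XB^{-1}X^*\in\Gamma_n$, that is $\Re(A)\geq X\,\Re(B^{-1})\,X^*$, and invoke the $\alpha$-dependent half of the same lemma, rewritten as $\cos^2(\alpha)(\Re B)^{-1}\leq\Re(B^{-1})$. Conjugating by $X$ and chaining gives
\[
\Re(A)\;\geq\;X\,\Re(B^{-1})\,X^*\;\geq\;\cos^2(\alpha)\,X(\Re B)^{-1}X^*
=(\cos\alpha\,X)(\Re B)^{-1}(\cos\alpha\,X)^*,
\]
and applying Lemma~\ref{lemma_pos_inv} in the reverse direction to the scaled block shows that $\Re\!\left(\left[\begin{smallmatrix}A&\cos\alpha\,X\\\cos\alpha\,X^*&B\end{smallmatrix}\right]\right)\geq O$, which is the desired accretivity.

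I do not expect a serious obstacle here, as the argument is a short chain of order inequalities once the right tools are named; the only genuine subtlety, and the reason the converse carries the factor $\cos\alpha$ instead of being a clean equivalence, is the gap between $\Re(B^{-1})$ and $(\Re B)^{-1}$ measured by $\sec^2(\alpha)$. The sector hypothesis $B\in\Pi_{n,\alpha}$ is precisely what bounds that gap, and the rescaling $X\mapsto\cos\alpha\,X$ is what absorbs it; recognizing which of the two inequalities in Lemma~\ref{Copmarision bw R(f(A)) and f(R(A))} to use in each direction is therefore the main point to get right.
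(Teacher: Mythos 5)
Your proposal is correct and follows essentially the same route as the paper: both directions reduce to the Schur-complement criterion of Lemma~\ref{lemma_pos_inv} applied to $\Re(M)$, with the forward implication using $\Re(B^{-1})\leq(\Re B)^{-1}$ and the converse using $\Re(B^{-1})\geq\cos^2(\alpha)(\Re B)^{-1}$ from Lemma~\ref{Copmarision bw R(f(A)) and f(R(A))}, exactly as in the paper. The identity $\Re(XB^{-1}X^*)=X\,\Re(B^{-1})\,X^*$, which you isolate explicitly, is used implicitly in the paper's argument as well.
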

\begin{proof}
    If $M\in\Gamma_n$, then $\Re(M)\geq O$. Since $\Re(M)=\begin{bmatrix}
    \Re(A) & X\\
    X^* &  \Re(B)
\end{bmatrix},$ Lemma \ref{lemma_pos_inv} implies that $\Re(A)\geq X(\Re(B))^{-1}X^*.$ But it is well known that when $B\in\Gamma_n^{+}$, then $(\Re(B))^{-1}\geq \Re(B^{-1});$ see Lemma \ref{Copmarision bw R(f(A)) and f(R(A))}. Consequently,
$\Re(A)\geq X(\Re(B))^{-1}X^*\geq X\Re(B^{-1})X^*=\Re(XB^{-1}X^*),$ which ensures that
$\Re(A-XB^{-1}X^*)\geq O.$ This is equivalent to saying that $A-XB^{1}X^*\in\Gamma_n.$\\
On the other hand, assume that $A-XB^{-1}X^*\in\Gamma_n.$ This means that $\Re(A-XB^{-1}X^*)\geq O,$ or $\Re(A)-X\Re(B^{-1})X^*\geq O.$ But we know that $\Re(B^{-1})\geq \cos^{2}\alpha (\Re(B))^{-1}$ from Lemma \ref{Copmarision bw R(f(A)) and f(R(A))}. Therefore, we have $\Re(A)-\cos^{2}\alpha X(\Re(B))^{-1}X^*\geq O,$ which is equivalent to saying that $\left[\begin{matrix}\Re(A)&\cos\alpha X\\\cos\alpha X^*&\ \Re(B)\end{matrix}\right]\geq O.$ This last statement ensures that $\left[\begin{matrix}A&\cos\alpha X\\\cos\alpha X^*&B\end{matrix}\right]\in\Gamma_n.$
\end{proof}
Notice that since \(A-(A^{-1})^{-1}\geq O\) for any invertable \(A\in M_n\), then by Theorem \ref{thm_accretive_equiv}, we have the following
\begin{corollary}\label{the blk matrix A, cos(alpha) and inv(A) is accretive}
    Let \(A\in \Gamma_n^+\). Then \(\begin{bmatrix}
    A & \cos (\alpha)I\\
    \cos(\alpha)I &  A^{-1}
\end{bmatrix} \in\Gamma_n\).
\end{corollary}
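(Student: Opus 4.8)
The plan is to read this off directly from the second assertion of Theorem~\ref{thm_accretive_equiv}. In that statement I am free to choose the blocks, so I would take the lower-right block to be $B=A^{-1}$ and set $X=I$; then the matrix $\begin{bmatrix} A & \cos\alpha\,X \\ \cos\alpha\,X^* & B\end{bmatrix}$ appearing in the conclusion of the theorem is exactly $\begin{bmatrix} A & \cos(\alpha)I \\ \cos(\alpha)I & A^{-1}\end{bmatrix}$, the matrix we wish to show is accretive.

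With these choices the two hypotheses of the theorem must be verified. The Schur-type condition $A-XB^{-1}X^*\in\Gamma_n$ becomes $A-I\,(A^{-1})^{-1}\,I=A-A=O$, which lies in $\Gamma_n$ trivially (this is precisely the observation $A-(A^{-1})^{-1}\ge O$ recorded just before the statement). The remaining, and only substantive, requirement is that $B=A^{-1}$ belong to $\Pi_{n,\alpha}$ for the angle $\alpha$ used in the conclusion. Since $A\in\Gamma_n^+$, we have $A\in\Pi_{n,\alpha}^+$ for some $\alpha\in[0,\pi/2)$, and I would fix this $\alpha$; the task is then to confirm that inversion does not enlarge the sector.

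This last point is the main (indeed the only) obstacle, and it is elementary. For a unit vector $x$, put $y=A^{-1}x$, so that $\langle A^{-1}x,x\rangle=\langle y,Ay\rangle=\overline{\langle Ay,y\rangle}=\|y\|^2\,\overline{w}$, where $w=\langle Ay,y\rangle/\|y\|^2\in W(A)\subseteq S_\alpha$. Because $S_\alpha$ is a cone that is symmetric about the real axis, $\|y\|^2\,\overline{w}\in S_\alpha$, whence $W(A^{-1})\subseteq S_\alpha$, i.e. $A^{-1}\in\Pi_{n,\alpha}$; the same computation gives $\Re\langle A^{-1}x,x\rangle=\|y\|^2\,\Re w>0$, so in fact $A^{-1}\in\Pi_{n,\alpha}^+\subseteq\Gamma_n^+$, as the theorem demands. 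Having checked both hypotheses, Theorem~\ref{thm_accretive_equiv} applies and delivers $\begin{bmatrix} A & \cos(\alpha)I \\ \cos(\alpha)I & A^{-1}\end{bmatrix}\in\Gamma_n$, as required. I anticipate no genuine difficulty beyond the sector-invariance step above, which is standard.
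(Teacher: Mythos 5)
Your proof is correct and follows exactly the paper's route: the paper derives the corollary in one line by applying the second assertion of Theorem~\ref{thm_accretive_equiv} with $B=A^{-1}$, $X=I$, and the observation $A-(A^{-1})^{-1}=O\in\Gamma_n$. Your additional verification that $A^{-1}\in\Pi_{n,\alpha}^{+}$ whenever $A\in\Pi_{n,\alpha}^{+}$ (via $\langle A^{-1}x,x\rangle=\|y\|^{2}\overline{w}$) is a correct and welcome filling-in of a hypothesis the paper leaves implicit.
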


\begin{remark}
    If \(A\in M_n\) is positive definite \((\alpha=0)\), then 
    \(\begin{bmatrix}
        A & I\\
        I& A^{-1}
    \end{bmatrix} \geq O\), which is a well-known fact \cite[page15]{bhatia2009positive}. On the other hand, since \(\Re(A)\leq (\Re(A^{-1}))^{-1}\) for any strictly accretive \(A\in M_n\), the block matrix \(\begin{bmatrix}
    A & I\\
    I &  A^{-1}
\end{bmatrix} \) is never accretive for any strictly accretive non-Hermitian matrix \(A\).
\end{remark}

The following theorem extends part \eqref{g-m is APT} of Lemma \ref{Zhang-Norm of M vs Norm of the diagonal entries}.
\begin{theorem}\label{w-g-mean is APT}
    Let \(A,B\in\Gamma_n^{+},X,Y\in \mathcal{M}_n\). Then \(M=\begin{bmatrix}
        A & X\\
        Y^*& B
    \end{bmatrix}\) is APT if and only if
    \(\begin{bmatrix}
        A\sharp_tB & X\\
        Y^* & A\sharp_{1-t}B
    \end{bmatrix}\)is APT for all \(t\in [0,1]\).
\end{theorem}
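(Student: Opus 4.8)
The plan is to prove the equivalence by reducing everything to the positivity of the real parts, exactly as in the accretive version of Lemma \ref{lemma_pos_inv} (Theorem \ref{thm_accretive_equiv}), and then invoking the known PPT result (Lemma \ref{w-g-mean is PPT}). The first observation is that for a block matrix with accretive diagonal blocks $A,B\in\Gamma_n^{+}$, accretivity of $M$ and of $M^{\tau}$ is governed entirely by the real parts: since
\[
\Re(M)=\begin{bmatrix}\Re(A)&\tfrac{X+Y}{2}\\[1mm]\tfrac{X^*+Y^*}{2}&\Re(B)\end{bmatrix},
\]
$M$ is accretive precisely when $\Re(M)\geq O$, and similarly $M^{\tau}$ is accretive precisely when $\Re(M^{\tau})\geq O$. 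The key point is that $\Re(M)$ and $\Re(M^{\tau})$ are \emph{positive} block matrices whose off-diagonal entries are $\tfrac{X+Y}{2}$ and its adjoint, with Hermitian positive diagonal blocks $\Re(A),\Re(B)$.

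First I would record the clean reformulation: with $A,B\in\Gamma_n^{+}$ so that $\Re(A),\Re(B)>O$, the matrix $M$ is APT if and only if the \emph{positive} block matrix
\[
\widetilde{M}:=\begin{bmatrix}\Re(A)&\tfrac{X+Y}{2}\\[1mm]\tfrac{X^*+Y^*}{2}&\Re(B)\end{bmatrix}
\]
is PPT. This is because $\Re(M)=\widetilde M$ and $\Re(M^{\tau})=\widetilde M^{\tau}$, so the two accretivity conditions on $M$ are exactly the two positivity conditions defining the PPT property of $\widetilde M$. Now apply Lemma \ref{w-g-mean is PPT} to the positive semidefinite blocks $\Re(A),\Re(B)$: the matrix $\widetilde M$ is PPT if and only if
\[
\begin{bmatrix}\Re(A)\sharp_t\Re(B)&\tfrac{X+Y}{2}\\[1mm]\tfrac{X^*+Y^*}{2}&\Re(A)\sharp_{1-t}\Re(B)\end{bmatrix}
\]
is PPT for all $t\in[0,1]$.

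The main obstacle — and the step needing care — is relating the positive-definite geometric means $\Re(A)\sharp_t\Re(B)$ appearing after invoking the PPT lemma to the \emph{accretive} geometric means $A\sharp_t B$ appearing in the statement. The diagonal blocks of the target matrix $\begin{bmatrix}A\sharp_t B&X\\Y^*&A\sharp_{1-t}B\end{bmatrix}$ are strictly accretive by Lemma \ref{accretivity of A sigma B}, so its APT-ness is again equivalent to the PPT-ness of its real part, namely of
\[
\begin{bmatrix}\Re(A\sharp_t B)&\tfrac{X+Y}{2}\\[1mm]\tfrac{X^*+Y^*}{2}&\Re(A\sharp_{1-t}B)\end{bmatrix}.
\]
Thus the whole theorem comes down to transferring PPT-ness between the matrix with diagonal $\Re(A)\sharp_t\Re(B)$ and the matrix with diagonal $\Re(A\sharp_t B)$. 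Here Lemma \ref{accretivity of A sigma B} provides the two-sided bound $\Re(A)\sharp_t\Re(B)\leq\Re(A\sharp_t B)\leq\sec^2\alpha\,\bigl(\Re(A)\sharp_t\Re(B)\bigr)$. I would exploit that enlarging the diagonal blocks of a positive block matrix preserves its positivity (via Lemma \ref{lemma_pos_inv} and monotonicity of $Z\mapsto \tfrac{X+Y}{2}\,Z^{-1}\,\tfrac{X^*+Y^*}{2}$ under the inverse), so the lower bound lets PPT-ness pass from the $\Re(A)\sharp_t\Re(B)$ matrix up to the $\Re(A\sharp_t B)$ matrix, yielding one direction; for the converse I would run the same Schur-complement argument in reverse, passing through the factor $\sec^2\alpha$ as in the second half of Theorem \ref{thm_accretive_equiv}, together with Remark \ref{rem_1} to handle the passage from $\Gamma_n^{+}$ to any needed limiting case. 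The delicate bookkeeping is in checking that both $t$ and $1-t$ means are controlled simultaneously and that the Schur-complement monotonicity is applied to the correct block.
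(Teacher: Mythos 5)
Your forward implication is correct and takes a genuinely different, and in fact cleaner, route than the paper. You observe that for $A,B\in\Gamma_n^{+}$ one has $\Re(M^{\tau})=(\Re(M))^{\tau}$, so $M$ is APT iff $\Re(M)$ is PPT; you then invoke Lemma \ref{w-g-mean is PPT} as a black box on the positive blocks $\Re(A),\Re(B)$, and pass from the diagonal blocks $\Re(A)\sharp_t\Re(B)$ to the larger blocks $\Re(A\sharp_tB)$ using Lemma \ref{accretivity of A sigma B} (enlarging the diagonal blocks of a PPT matrix, e.g.\ by adding $\mathrm{diag}\bigl(\Re(A\sharp_tB)-\Re(A)\sharp_t\Re(B),\,\Re(A\sharp_{1-t}B)-\Re(A)\sharp_{1-t}\Re(B)\bigr)\geq O$, clearly preserves PPT-ness). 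The paper instead re-derives the relevant Schur-complement inequalities from scratch, using the four bounds $Z\Re(B)^{-1}Z^{*}\leq\Re(A)$ etc.\ together with the transformer inequality \eqref{eq_conj_sig}; your reduction buys a shorter argument at the cost of relying on the PPT lemma, while the paper's computation is self-contained.

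The converse direction as you describe it, however, has a genuine gap. Running the Schur-complement argument ``in reverse, passing through the factor $\sec^2\alpha$'' does not close: from $\Re(A\sharp_{1-t}B)\leq\sec^2\alpha\,\bigl(\Re(A)\sharp_{1-t}\Re(B)\bigr)$ you only get
\[
Z\bigl(\Re(A)\sharp_{1-t}\Re(B)\bigr)^{-1}Z^{*}\leq\sec^2\alpha\;Z\bigl(\Re(A\sharp_{1-t}B)\bigr)^{-1}Z^{*}\leq\sec^2\alpha\;\Re(A\sharp_tB)\leq\sec^4\alpha\,\bigl(\Re(A)\sharp_t\Re(B)\bigr),
\]
so you would recover PPT-ness of the $\Re(A)\sharp_t\Re(B)$-block matrix only with the off-diagonal damped by $\cos^{2}\alpha$, and hence at best a weakened, constant-laden version of ``$M$ is APT'' rather than the stated equivalence. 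The actual converse requires no such machinery: since $A\sharp_0B=A$ and $A\sharp_1B=B$, the hypothesis at $t=0$ is literally that $M$ itself is APT, which is how the paper disposes of this direction in one line. Replace your reverse Schur-complement argument with this observation and the proof is complete.
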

\begin{proof}
Assume first that \(M=\begin{bmatrix}
        A & X\\
        Y^*& B
    \end{bmatrix}\) is APT, and set \(Z= \dfrac{X+Y}{2}\). 
  Since \(M\) is APT, then   \(\begin{bmatrix}
        A & X\\
        Y^*& B
    \end{bmatrix}, \begin{bmatrix}
   A&Y^*\\
   X&B
\end{bmatrix}\in \Gamma_{2n}\). Conjugating these two accretive matrices with $\begin{bmatrix}
   O&I\\
   I&O
\end{bmatrix}$ implies that 
\(\begin{bmatrix}
        B & Y^*\\
        X& A
    \end{bmatrix}, \begin{bmatrix}
   B&X\\
   Y^*&A
\end{bmatrix}\in \Gamma_{2n}\).

Now,
\[\begin{bmatrix}
        A & X\\
        Y^*& B
    \end{bmatrix}, \begin{bmatrix}
   B&X\\
   Y^*&A
\end{bmatrix}\in \Gamma_{2n}\Rightarrow \begin{bmatrix}
   \Re(A) & Z\\
   Z^*& \Re(B)
\end{bmatrix}, \begin{bmatrix}
   \Re(B) & Z\\
   Z^*& \Re(A)
\end{bmatrix}\geq  O.\]
Applying Lemma \ref{lemma_pos_inv}, we obtain
\begin{equation}\label{Real A as an upper bound}
    Z \Re(B)^{-1} Z^*\leq \Re(A)\; \text{ and }\; Z \Re(A)^{-1} Z^*\leq \Re(B).
\end{equation}\\
Similarly, 
\[\begin{bmatrix}
   A&Y^*\\
   X&B
\end{bmatrix},\begin{bmatrix}
        B & Y^*\\
        X& A
    \end{bmatrix}\in\Gamma_{2n}\Rightarrow \begin{bmatrix}
   \Re(A) & Z^*\\
   Z& \Re(B)
\end{bmatrix}, \begin{bmatrix}
   \Re(B) & Z^*\\
   Z& \Re(A)
\end{bmatrix}\geq O. \]
Lemma \ref{lemma_pos_inv} again implies
\begin{equation}\label{Real B as an upper bound}
 Z^* \Re(B)^{-1} Z \leq \Re(A)\; \text{ and } \; Z^* \Re(A)^{-1} Z \leq \Re(B).   
\end{equation}
But for any \(t\in [0,1]\), Lemma \ref{accretivity of A sigma B} implies
 \[\Re(A\sharp_{1-t} B) \geq   \Re(A)\sharp_{1-t} \Re(B).\]
Since both \(\Re(A), \Re(B)>O\), we get 
\begin{align*}
    Z \Big( \Re(A\sharp_{1-t} B)\Big)^{-1} Z^* & \leq  Z  \Big( \Re(A)\sharp_{1-t}\; \Re(B) \Big)^{-1} Z^*\\
    & = Z \Big( \Re(A)^{-1}\sharp_{1-t}\; \Re(B)^{-1}\Big) Z^*\\
    & =  Z \Big(  \Re(B)^{-1}\sharp_{t}\; \Re(A)^{-1}\Big) Z^*\\
    & \leq  \Big(Z \Re(B)^{-1} Z^* \sharp_{t}\; Z \Re(A)^{-1} Z^* \Big)\quad ({\text{by}}\; \eqref{eq_conj_sig})\\
   &  \leq   \Re(A) \sharp_t\; \Re(B) \quad (\text{by }\,  \eqref{Real A as an upper bound}\;{\text{and\;Lemma}} \;\ref{w-g-mean of positive matrices}) \\
   & \leq \Re(A\sharp_t B)\quad ({\text{by\;Lemma}}\;\ref{accretivity of A sigma B}).\\
\end{align*}
Applying Lemma \ref{lemma_pos_inv}, this is equivalent to 
  \(\begin{bmatrix}
   \Re(A\sharp_t B) & Z\\
   Z^*& \Re(A\sharp_{1-t} B)
\end{bmatrix}\geq O.\) \\ 
Similarly, 
\begin{align*}
    Z^*\Big(\Re(A\sharp_{1-t} B)\Big)^{-1} Z
    & \leq  Z^* \Big(  \Re(A)\sharp_{1-t}\; \Re(B) \Big)^{-1} Z\\
    & =  Z^* \Big( \Re(A)^{-1}\sharp_{1-t}\;  \Re(B)^{-1}\Big) Z\\
    & =  Z^* \Big( ( \Re(B)^{-1}\sharp_{t} \; \Re(A)^{-1}\Big) Z\\
    & \leq  \Big(\,Z^* \Re(B)^{-1} Z \sharp_{t}\;  Z^* \Re(A)^{-1} Z\,\Big) \hspace{50mm}\\
   &\leq \Re(A) \sharp_t\; \Re(B) \, \hspace{60mm}\text{ by }\, \eqref{Real B as an upper bound}\\
   & \leq \Re(A\sharp_t B),\\
\end{align*}
which is equivalent to saying  \(\begin{bmatrix}
   \Re(A\sharp_t B) & Z^*\\
   Z& \Re(A\sharp_{1-t} B)
\end{bmatrix}\geq O.\)
Thus, we have shown that
\[\begin{bmatrix}
   \Re(A\sharp_t B) & Z\\
   Z^*& \Re(A\sharp_{1-t} B)
\end{bmatrix}, \begin{bmatrix}
   \Re(A\sharp_t B) & Z^*\\
   Z& \Re(A\sharp_{1-t} B)
\end{bmatrix}\geq O, \]
which means that
\(\begin{bmatrix}
        A\sharp_tB & X\\
        Y^* & A\sharp_{1-t}B
    \end{bmatrix}\) and \(\begin{bmatrix}
        A\sharp_tB & Y^*\\
        X & A\sharp_{1-t}B
    \end{bmatrix}\) are accretive, and hence \(\begin{bmatrix}
        A\sharp_tB & X\\
        Y^* & A\sharp_{1-t}B
    \end{bmatrix}\) is APT  for any \(t\in [0,1]\). This completes the sufficiency part of the theorem.\\
    
For the necessity part, assume that \(\begin{bmatrix}
        A\sharp_tB & X\\
        Y^* & A\sharp_{1-t}B
    \end{bmatrix}\)is APT for all \(t\in [0,1]\). Then by letting \(t=0\), we reach the desired conclusion because \(A\sharp_0 B= A\) and \(A\sharp_1 B =B\).
\end{proof}

Notice that if \(M=\begin{bmatrix}
        A & X\\
        X^*& B
    \end{bmatrix}\) is accretive, then \(\Re(M)=\begin{bmatrix}
        \Re(A) & X\\
        X^*& \Re(B)
    \end{bmatrix}\geq O.\) Hence, by part (iii) of Lemma \ref{lemma_pos_bl},
    \[2s_j(X) \leq s_j(\Re(M)).\]
    Since, \(\lambda_j(\Re(A)) \leq s_j(A)\) for any  matrix $A$, one may apply Lemma \ref{Zhang-Norm of M vs Norm of the diagonal entries} to obtain  the following extension of part \eqref{Tao's singular values ineq} of Lemma \ref{lemma_pos_bl} and of Lemma \ref{Hiroshima's ineq}.
\begin{proposition}\label{generalization of Tao's sing.values ineq}
    Let \(A,B,X\in \mathcal{M}_n\) be such that \(M=\begin{bmatrix}
        A & X\\
        X^*& B
    \end{bmatrix}\in\Gamma_{2n}\). Then
    \[2s_j(X) \leq s_j(M),\; j=1,2,...,n.\]

    Moreover, if \(M\) is APT and $M\in\Pi_{2n,\alpha}$ for some \(\alpha\in [0,\pi/2)\), then 
    \begin{equation}\label{generalization of Hiroshima's ineq.}
        2\Vert X \Vert_u \leq \sec(\alpha) \Vert A+B\Vert_u
    \end{equation}
\end{proposition}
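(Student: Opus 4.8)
The plan is to build on the observation recorded immediately before the statement, so that the first assertion follows by combining Tao's inequality with a classical eigenvalue--singular value bound. First I would note that since $M\in\Gamma_{2n}$, its Hermitian part $\Re(M)=\begin{bmatrix}\Re(A)&X\\X^*&\Re(B)\end{bmatrix}$ is positive semi-definite, so part \eqref{Tao's singular values ineq} of Lemma \ref{lemma_pos_bl} applies and gives $2s_j(X)\leq s_j(\Re(M))$ for each $j$. Because $\Re(M)\geq O$, its singular values and eigenvalues coincide, whence $s_j(\Re(M))=\lambda_j(\Re(M))$. Applying the inequality $\lambda_j(\Re(P))\leq s_j(P)$ noted before the statement with $P=M$ then yields $s_j(\Re(M))\leq s_j(M)$, and chaining the three relations produces $2s_j(X)\leq s_j(M)$, which is the first claim.

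For the norm inequality I would pass from this singular-value domination to a statement about unitarily invariant norms. Viewing the $n$ singular values of $2X$ as padded by zeros to occupy the $2n$ indices of $M$, the bound $2s_j(X)\leq s_j(M)$ holds termwise for every $j$, so summing the largest $k$ of them gives $\sum_{j=1}^{k}s_j(2X)\leq\sum_{j=1}^{k}s_j(M)$ for all $k$. By the Ky Fan dominance principle this termwise majorization upgrades to $2\Vert X\Vert_u\leq\Vert M\Vert_u$ for every unitarily invariant norm. Finally, since $M$ is APT with $M\in\Pi_{2n,\alpha}$, part (i) of Lemma \ref{Zhang-Norm of M vs Norm of the diagonal entries} gives $\Vert M\Vert_u\leq\sec(\alpha)\Vert A+B\Vert_u$; combining this with the previous estimate delivers \eqref{generalization of Hiroshima's ineq.}.

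The step I expect to require the most care is the cross-dimensional norm comparison: here $X\in\mathcal{M}_n$ while $M\in\mathcal{M}_{2n}$, so the inequality $2\Vert X\Vert_u\leq\Vert M\Vert_u$ only makes sense once we adopt the convention that a unitarily invariant norm is generated by a symmetric gauge function extended across dimensions by appending zeros, consistent with the normalization fixed in the introduction. Under this convention the padding argument and the Ky Fan dominance step are legitimate, and it is worth remarking explicitly that the termwise inequality $2s_j(X)\leq s_j(M)$ already forces the required partial-sum majorization, so no separate trace or majorization computation is needed. The remaining ingredients --- positivity of $\Re(M)$, the eigenvalue bound $\lambda_j(\Re(M))\leq s_j(M)$, and Lemma \ref{Zhang-Norm of M vs Norm of the diagonal entries}(i) --- enter essentially verbatim.
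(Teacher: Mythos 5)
Your proof is correct and follows essentially the same route as the paper: positivity of $\Re(M)$ plus Tao's inequality gives $2s_j(X)\leq s_j(\Re(M))=\lambda_j(\Re(M))\leq s_j(M)$, and the norm bound then follows from Lemma \ref{Zhang-Norm of M vs Norm of the diagonal entries}(i). Your explicit attention to the Ky Fan dominance step and the dimension-padding convention only fills in details the paper leaves implicit.
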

\begin{remark}
 It is worth mentioning here that part \eqref{Tao's singular values ineq} of Lemma \ref{lemma_pos_bl} is equivalent to the following well-known fact \[s_j(A-B)\leq s_j(A\oplus B), \; j=1,2,...,n,\]
 when $A,B\geq O.$ Here the direct sum \(A\oplus B \) denotes the block diagonal matrix \(\begin{bmatrix}
        A & O\\
        O& B
    \end{bmatrix}.\) This is no longer true in the accretive case. To see this let \[A= \left[
\begin{array}{cc}
 2+2 i & -1+2 i \\
 3+2 i & 1+i \\
\end{array}
\right]\;{\text{and}}\;B=\left[
\begin{array}{cc}
 1+2 i & -2-i \\
 -2-i & 5+i \\
\end{array}
\right].\] Then it can be seen that both $A,B\in\Gamma_n^+,$ and
\[s(A-B)=\left\{\sqrt{\frac{1}{2} \left(5 \sqrt{97}+61\right)},\sqrt{\frac{1}{2} \left(61-5 \sqrt{97}\right)}\right\}\approx\{7.42443,2.42443\},\]
and
\begin{align*}
    s(A\oplus B)&=\left\{\sqrt{\frac{1}{2} \left(3 \sqrt{165}+41\right)},\sqrt{7 \left(\sqrt{3}+2\right)},\sqrt{7 \left(2-\sqrt{3}\right)},\sqrt{\frac{1}{2} \left(41-3 \sqrt{165}\right)}\right\}\\
    &\approx \{6.30618,5.1112,1.36954,1.11002\}.
\end{align*}
That is, in this example, $s_1(A-B)>s_1(A\oplus B).$
\end{remark}

Applying Proposition \ref{generalization of Tao's sing.values ineq}, together with Theorem \ref{w-g-mean is APT}, implies the following.

\begin{corollary}\label{norm X vs norm of the w-g-mean of A and B}
   Let \(A,B\in\Gamma_n^{+},X\in \mathcal{M}_n\) be such that \(M=\begin{bmatrix}
        A & X\\
        X^*& B
    \end{bmatrix}\) is APT with \(M\in\Pi_{2n,\alpha}\) for some \(\alpha\in [0,\pi/2)\). Then, for any \(t\in [0,1]\), 
    \begin{equation}
        \Vert X\Vert _u\leq \dfrac{\sec(\alpha)}{2} \Vert A\sharp_tB + A\sharp_{1-t}B \Vert_u.
    \end{equation}
\end{corollary}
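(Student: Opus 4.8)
The plan is to combine the two main results that immediately precede this corollary, namely the singular value inequality from Proposition \ref{generalization of Tao's sing.values ineq} and the geometric mean characterization from Theorem \ref{w-g-mean is APT}. The target statement bounds $\|X\|_u$ in terms of the unitarily invariant norm of $A\sharp_t B + A\sharp_{1-t}B$, so the natural strategy is to replace the original diagonal blocks $A$ and $B$ by the geometric means $A\sharp_t B$ and $A\sharp_{1-t}B$, and then apply the Hiroshima-type bound \eqref{generalization of Hiroshima's ineq.} to the new block matrix.

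First I would invoke Theorem \ref{w-g-mean is APT}: since $M=\left[\begin{smallmatrix}A&X\\X^*&B\end{smallmatrix}\right]$ is APT (with $Y=X$ in the notation of that theorem), the block matrix
\[
M_t=\begin{bmatrix} A\sharp_t B & X\\ X^* & A\sharp_{1-t}B\end{bmatrix}
\]
is APT for every $t\in[0,1]$. Next I would check the sector hypothesis needed to apply \eqref{generalization of Hiroshima's ineq.} to $M_t$: Proposition \ref{generalization of Tao's sing.values ineq} requires $M_t\in\Pi_{2n,\alpha}$. I expect this to follow from the assumption $M\in\Pi_{2n,\alpha}$ together with Lemma \ref{accretivity of A sigma B}, which controls the numerical range (equivalently, the real and imaginary parts) of a matrix mean by that of the original matrices within the same sector $S_\alpha$; since the off-diagonal block $X$ is unchanged and the diagonal blocks stay in $\Pi_{n,\alpha}$, the whole $2n\times2n$ block should remain in $\Pi_{2n,\alpha}$. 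Once $M_t$ is known to be APT and in $\Pi_{2n,\alpha}$, the second assertion of Proposition \ref{generalization of Tao's sing.values ineq} applied to $M_t$ gives directly
\[
2\|X\|_u\le \sec(\alpha)\,\|A\sharp_t B + A\sharp_{1-t}B\|_u,
\]
and dividing by $2$ yields the claimed bound.

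The step I expect to be the main obstacle is verifying that $M_t$ genuinely lies in $\Pi_{2n,\alpha}$ with the \emph{same} angle $\alpha$, so that the $\sec(\alpha)$ factor in the conclusion is justified. The subtlety is that sectoriality of the full $2n\times 2n$ block is a joint condition on the diagonal blocks and the coupling $X$, not merely a condition on the diagonal blocks separately, so one cannot simply cite Lemma \ref{accretivity of A sigma B} blockwise without argument. I would address this by writing the numerical range condition $M_t\in\Pi_{2n,\alpha}$ as the pencil inequality $\pm\Im(M_t)\le\tan(\alpha)\,\Re(M_t)$ and comparing it with the corresponding inequality for $M$, using that $\Im(M_t)$ has the same off-diagonal entries as $\Im(M)$ (both governed by $X$) while the diagonal parts $\Re(A\sharp_t B),\ \Im(A\sharp_t B)$ are controlled by $\Re(A),\Im(A)$ via Lemma \ref{accretivity of A sigma B}. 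If a fully clean deduction of the sector condition is awkward, the fallback is to observe that the corollary's hypotheses already supply the APT property of $M_t$ through Theorem \ref{w-g-mean is APT}, and to argue the sector membership through the monotonicity and mean-comparison estimates of Lemma \ref{accretivity of A sigma B} and Lemma \ref{w-g-mean of positive matrices}; the remaining computation reducing $2\|X\|_u$ to the stated right-hand side is then routine.
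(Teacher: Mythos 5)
Your argument is exactly the paper's: the corollary is stated there with the one-line justification ``Applying Proposition \ref{generalization of Tao's sing.values ineq}, together with Theorem \ref{w-g-mean is APT}, implies the following,'' which is precisely your combination of replacing the diagonal blocks by $A\sharp_t B$ and $A\sharp_{1-t}B$ and then invoking the Hiroshima-type bound \eqref{generalization of Hiroshima's ineq.}. The sector-membership point you flag --- verifying that the new block matrix still lies in $\Pi_{2n,\alpha}$ with the same angle $\alpha$ --- is passed over in silence by the paper, so your attention to it is an added caution rather than a departure from the intended proof.
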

\begin{corollary}\label{absolute value of X+Y ineq}
  Let \(A,B\in\Gamma_n^{+},X,Y\in \mathcal{M}_n\) be such that \(M=\begin{bmatrix}
        A & X\\
        Y^*& B
    \end{bmatrix}\) is APT. Then \[ \Big\vert \dfrac{X+Y}{2} \Big\vert \leq \Re\Big(A\sharp_t(V^*BV) \Big) \sharp \Re\Big(A\sharp_{1-t}\, (V^*BV)\Big),\] 
    and \[\Big\vert \dfrac{X^*+Y^*}{2} \Big\vert \leq \Re\Big((VAV^*)\sharp_t\, B \Big) \sharp \Re\Big((VAV^*)\sharp_{1-t}\, B\Big), \]
    where \(V\) is the unitary matrix in the polar decomposition   \(\dfrac{X+Y}{2}= V\Big\vert \dfrac{X+Y}{2}\Big\vert\).  
\end{corollary}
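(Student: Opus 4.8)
The plan is to reduce the accretive statement to the positive (PPT) situation recorded in Lemma \ref{Absolute value of X ineq}, applied to the \emph{real part} of the relevant block, and then to upgrade the resulting estimate using the accretive mean comparison of Lemma \ref{accretivity of A sigma B}. Write $Z=\frac{X+Y}{2}$. The first step, which is essentially free because it already appears inside the proof of Theorem \ref{w-g-mean is APT}, is to observe that the APT hypothesis on $M$ forces both
\[
\begin{bmatrix}\Re(A)&Z\\ Z^*&\Re(B)\end{bmatrix}\geq O
\qquad\text{and}\qquad
\begin{bmatrix}\Re(A)&Z^*\\ Z&\Re(B)\end{bmatrix}\geq O,
\]
obtained by conjugating the four accretive $2\times 2$ blocks arising from $M$ and $M^{\tau}$ with $\begin{bmatrix}O&I\\ I&O\end{bmatrix}$ and taking real parts. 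In other words, the Hermitian block $N=\begin{bmatrix}\Re(A)&Z\\ Z^*&\Re(B)\end{bmatrix}$ is PPT, and $\Re(A),\Re(B)>O$ since $A,B\in\Gamma_n^{+}$. This structural observation is the crux that lets us leave the accretive world and invoke the PPT machinery.

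Next I would feed $N$ into Lemma \ref{Absolute value of X ineq}, using the polar decomposition $Z=V\lvert Z\rvert$ with $\lvert Z\rvert=\bigl\lvert\frac{X+Y}{2}\bigr\rvert$. Applying the $\lvert X\rvert$ branch (with $\Re(A),\Re(B),Z,V$ playing the roles of $A,B,X,U$) gives
\[
\Bigl\lvert\tfrac{X+Y}{2}\Bigr\rvert\leq\bigl(\Re(A)\sharp_t(V^*\Re(B)V)\bigr)\sharp\bigl(\Re(A)\sharp_{1-t}(V^*\Re(B)V)\bigr),
\]
and the $\lvert X^*\rvert$ branch gives the companion bound for $\bigl\lvert\frac{X^*+Y^*}{2}\bigr\rvert$ in terms of $V\Re(A)V^*$ and $\Re(B)$.

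The remaining and most delicate step is to trade the real parts of $A,B$ for real parts of their means, i.e.\ to pass from $\Re(A)\sharp_t\Re(V^*BV)$ to $\Re\bigl(A\sharp_t(V^*BV)\bigr)$. Here I would use that $V$ is unitary, so $V^*\Re(B)V=\Re(V^*BV)$ and $V^*BV\in\Gamma_n^{+}$ with $W(V^*BV)=W(B)$; then Lemma \ref{accretivity of A sigma B} supplies $\Re(A)\sharp_t\Re(V^*BV)\leq\Re\bigl(A\sharp_t(V^*BV)\bigr)$ for every $t$, all matrices being positive definite. Applying this for both $t$ and $1-t$, and then invoking monotonicity of the geometric mean (Lemma \ref{w-g-mean of positive matrices}\eqref{monotonocity of the w-g-m}) on the \emph{outer} $\sharp$, upgrades the estimate to
\[
\Bigl\lvert\tfrac{X+Y}{2}\Bigr\rvert\leq\Re\bigl(A\sharp_t(V^*BV)\bigr)\sharp\Re\bigl(A\sharp_{1-t}(V^*BV)\bigr),
\]
which is the first claim; the second follows identically from the $\lvert X^*\rvert$ branch together with $V\Re(A)V^*=\Re(VAV^*)$. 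The only point demanding care is that the monotonicity step acts on the outer geometric mean of two quantities bounded simultaneously, which is exactly the hypothesis of Lemma \ref{w-g-mean of positive matrices}\eqref{monotonocity of the w-g-m}; I expect this compatibility — rather than any hard inequality — to be the main thing to get right.
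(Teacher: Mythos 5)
Your proposal is correct and follows essentially the same route as the paper: pass to the PPT matrix $\Re(M)$, apply Lemma \ref{Absolute value of X ineq} with the polar decomposition $\frac{X+Y}{2}=V\bigl\vert\frac{X+Y}{2}\bigr\vert$, identify $V^*\Re(B)V=\Re(V^*BV)$, and then upgrade via Lemma \ref{accretivity of A sigma B} together with the monotonicity of the outer geometric mean. The compatibility point you flag at the end is handled exactly as you anticipate, so nothing further is needed.
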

\begin{proof}
Since \(M\) is APT, then \(\Re(M)=\begin{bmatrix}
        \Re(A) & \dfrac{X+Y}{2}\\
        \dfrac{X^*+Y^*}{2} & \Re(B)
    \end{bmatrix}\)is PPT. Hence, by Lemma \ref{Absolute value of X ineq}, 
  \begin{align*}
      \Big\vert \dfrac{X+Y}{2} \Big\vert & \leq \Big(\Re(A)\sharp_t\; (V^*\Re(B)V)\Big) \sharp \Big( \Re(A)\sharp_{1-t}\; (V^*\Re(B)V)\,\Big)\\
      &= \Big(\Re(A)\sharp_t\; \Re(V^* BV)\Big) \sharp \Big( \Re(A)\sharp_{1-t}\; \Re(V^* BV)\,\Big)\\
      &\leq \Re(A\sharp_t\; (V^*BV)) \sharp \Re(A\sharp_{1-t}\, (V^*BV)),\\      
\end{align*}
where we have used Lemma \ref{accretivity of A sigma B} and part (iii) of Lemma \ref{w-g-mean of positive matrices} to obtain the last inequality. Similarly,
\begin{align*}
    \Big\vert \dfrac{X^*+Y^*}{2} \Big\vert & \leq \Big( (V \Re(A)V^*)\sharp_{t}\, \Re(B)\Big) \sharp  \Big((V \Re(A)V^*)\sharp_{1-t} \Re(B))\Big)\\
    &=\Big( \Re(V AV^*)\sharp_{t}\, \Re(B)\Big) \sharp  \Big(\Re(V A V^*)\sharp_{1-t} \Re(B)\Big)\\
    &\leq \Re\Big( (V AV^*)\sharp_{t}\, B\Big) \sharp  \Re\Big( (V A V^*)\sharp_{1-t}\Big),\\
\end{align*}
which completes the proof.
\end{proof}
\begin{corollary}\label{absolute value of X+Y ineq-2}
  Let \(A,B\in\Gamma_n^{+},X,Y\in \mathcal{M}_n\) be such that \(M=\begin{bmatrix}
        A & X\\
        Y^*& B
    \end{bmatrix}\) is APT. Then 
    \[ \Big\vert \dfrac{X+Y}{2} \Big\vert \leq \dfrac{\Re\Big(A\sharp_tB + V^*(A\sharp_{1-t} B)V\Big)}{2},\] 
    and \[\Big\vert \dfrac{X^*+Y^*}{2} \Big\vert \leq \dfrac{\Re\Big(V\, (A\sharp_tB)\, V^* + (A\sharp_{1-t} B)\Big)}{2},\]
    where \(V\) is the unitary matrix in the polar decomposition \(\dfrac{X+Y}{2}= V\Big\vert \dfrac{X+Y}{2}\Big\vert\).
\end{corollary}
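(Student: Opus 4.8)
The plan is to reduce to the PPT case exactly as in the proof of Corollary~\ref{absolute value of X+Y ineq}, and then to trade the first pair of inequalities in Lemma~\ref{Absolute value of X ineq} for the ``Furthermore'' pair, closing with one matrix arithmetic--geometric mean step.

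First I would observe that, since $M$ is APT, both $M$ and $M^{\tau}$ are accretive, so that $\Re(M)\geq O$ and $(\Re(M))^{\tau}=\Re(M^{\tau})\geq O$. In other words
\[\Re(M)=\begin{bmatrix}\Re(A) & \frac{X+Y}{2}\\ \frac{X^*+Y^*}{2} & \Re(B)\end{bmatrix}\]
is PPT, with $\Re(A),\Re(B)>O$ because $A,B\in\Gamma_n^{+}$. Setting $Z=\frac{X+Y}{2}$ with polar decomposition $Z=V\vert Z\vert$, I would apply the third and fourth (``Furthermore'') inequalities of Lemma~\ref{Absolute value of X ineq} to the PPT block $\Re(M)$, under the substitutions $A\mapsto\Re(A)$, $B\mapsto\Re(B)$, $X\mapsto Z$, $U\mapsto V$, to get $\vert Z\vert\leq\big(\Re(A)\sharp_t\Re(B)\big)\sharp\big(V^*(\Re(A)\sharp_{1-t}\Re(B))V\big)$ together with the companion bound for $\vert Z^*\vert$.

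The next step is to move the real parts outward. By Lemma~\ref{accretivity of A sigma B} one has $\Re(A)\sharp_s\Re(B)\leq\Re(A\sharp_s B)$, while $V^*\Re(C)V=\Re(V^*CV)$ for unitary $V$; combined with the monotonicity of the geometric mean (Lemma~\ref{w-g-mean of positive matrices}(iii)) this upgrades the estimate to $\vert Z\vert\leq\Re(A\sharp_t B)\,\sharp\,\Re\big(V^*(A\sharp_{1-t}B)V\big)$. Invoking the matrix arithmetic--geometric mean inequality $P\sharp Q\leq\frac{P+Q}{2}$ for $P,Q\geq O$ then converts the geometric mean into the arithmetic mean, which is precisely the first claimed inequality. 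The second inequality is obtained in the same way from the fourth inequality of Lemma~\ref{Absolute value of X ineq}, now using $V\Re(C)V^*=\Re(VCV^*)$ to deduce $\vert Z^*\vert\leq\Re\big(V(A\sharp_t B)V^*\big)\,\sharp\,\Re(A\sharp_{1-t}B)$ before the final mean step.

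I expect the reduction to $\Re(M)$ and the algebraic manipulations to be routine; the only genuinely new ingredient compared with Corollary~\ref{absolute value of X+Y ineq} is the closing arithmetic--geometric mean step, so the main point to get right is to carry the correct ``Furthermore'' inequality (with $V^*(\cdot)V$ versus $V(\cdot)V^*$ on the proper side) all the way through before applying $P\sharp Q\leq\frac{P+Q}{2}$.
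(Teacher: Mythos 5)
Your proposal is correct and follows essentially the same route as the paper's own proof: reduce to the PPT block $\Re(M)$, apply the ``Furthermore'' inequalities of Lemma \ref{Absolute value of X ineq}, push the real parts outward via Lemma \ref{accretivity of A sigma B} and monotonicity of the geometric mean, and finish with the arithmetic--geometric mean inequality. No gaps.
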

\begin{proof}
Since \(M\) is APT, then \(\Re(M)=\begin{bmatrix}
        \Re(A) & \dfrac{X+Y}{2}\\
        \dfrac{X^*+Y^*}{2} & \Re(B)
    \end{bmatrix}\) is PPT. Hence, by Lemma \ref{Absolute value of X ineq}, 
  \begin{align*}
      \Big\vert \dfrac{X+Y}{2} \Big\vert & \leq \Big(\Re(A)\sharp_t\; \Re(B)\Big) \sharp \Big(V^*\, (\Re(A)\sharp_{1-t}\; \Re(B))\, V\Big)\\
      &\leq \Re(A\sharp_t\; B) \sharp \Big(V^*\, \Re(A\sharp_{1-t} \;B)\, V\Big)\\
      & = \Re(A\sharp_t\; B) \sharp \Re\Big(V^*\, (A\sharp_{1-t} \; B)\, V\Big)\\
      & \leq \dfrac{\Re(A\sharp_t\; B) + \Re\Big(V^*\, (A\sharp_{1-t}\; B)\, V\Big)}{2}\\
      &= \dfrac{\Re\Big((A\sharp_t\; B) + V^*\, (A\sharp_{1-t}\; B)\, V\Big)}{2},\\
  \end{align*}  
  where we have used Lemma \ref{accretivity of A sigma B}, part \eqref{monotonocity of the w-g-m} of Lemma \ref{w-g-mean of positive matrices} and the arithmetic-geometric mean inequality to obtain the last two inequalities. In a similar fashion,
  \begin{align*}
      \Big\vert \dfrac{X^*+Y^*}{2} \Big\vert & \leq \Big(V\, (\Re(A)\sharp_t \Re(B))\, V^*\Big) \sharp  \Big(\Re(A)\sharp_{1-t} \Re(B))\Big)\\
      &\leq \Big(V\, \Re(A\sharp_t B)\, V^*\Big) \sharp  \Re(A\sharp_{1-t} B)\\
      & =  \Re\Big(V (A\sharp_t B)\, V^*\Big) \sharp \Re( A\sharp_{1-t} B)\\
      & \leq \dfrac{\Re\Big(V\, (A\sharp_t B)\, V^*\Big) + \Re(A\sharp_{1-t} B)}{2}\\
      &= \dfrac{\Re\Big(V\, (A\sharp_t B)\, V^* + (A\sharp_{1-t} B)\Big)}{2},\\
  \end{align*} 
  completing the proof.
\end{proof}
By applying the same argument used in the proof of Corollary \ref{absolute value of X+Y ineq-2} and Weyl's monotonicity principle, we obtain the following result.

\begin{corollary}
    Let \(A,B\in\Gamma_n^{+},X,Y\in \mathcal{M}_n\) be such that \(M=\begin{bmatrix}
        A & X\\
        Y^*& B
    \end{bmatrix}\) is APT. Then 
    \begin{align*}
        s_j\Big( \dfrac{X+Y}{2} \Big) & \leq s_j\Big((\Re(A)\sharp_t \Re(B) )\sharp (V^*(\Re(A)\sharp_{1-t} \Re(B))V)\Big)\\
        &\leq s_j\Big(\Re(A\sharp_t B )\sharp (V^*(\Re(A\sharp_{1-t} B)V)\Big)\\
        & \leq s_j\Big(\dfrac{\Re(A\sharp_tB + V^*(A\sharp_{1-t} B)V}{2}\Big)\\
        &\leq s_j\Big( \dfrac{A\sharp_tB + V^*(A\sharp_{1-t} B)V}{2}\Big)\\
        \end{align*}
\end{corollary}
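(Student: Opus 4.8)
The plan is to recycle the operator-inequality chain already established inside the proof of Corollary~\ref{absolute value of X+Y ineq-2} and then convert it into singular-value inequalities. Since $M$ is APT, its real part $\Re(M)$ is PPT with off-diagonal entry $\frac{X+Y}{2}$, so the argument of Corollary~\ref{absolute value of X+Y ineq-2} (which combines Lemma~\ref{Absolute value of X ineq}, Lemma~\ref{accretivity of A sigma B}, part~\eqref{monotonocity of the w-g-m} of Lemma~\ref{w-g-mean of positive matrices}, and the arithmetic--geometric mean inequality) already yields a chain of four positive semidefinite matrices
\[
\Big|\tfrac{X+Y}{2}\Big|\ \le\ P_1\ \le\ P_2\ \le\ P_3,
\]
where $P_1=(\Re(A)\sharp_t\Re(B))\sharp\big(V^*(\Re(A)\sharp_{1-t}\Re(B))V\big)$, $P_2=\Re(A\sharp_tB)\sharp\big(V^*\Re(A\sharp_{1-t}B)V\big)$, and $P_3=\Re\big(\tfrac{A\sharp_tB+V^*(A\sharp_{1-t}B)V}{2}\big)$, with $V$ the unitary from the polar decomposition $\frac{X+Y}{2}=V\big|\frac{X+Y}{2}\big|$. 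No new computation is required at this stage; this is precisely the content carried through the proof of Corollary~\ref{absolute value of X+Y ineq-2}.

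Next, because all four matrices in this chain are positive semidefinite, Weyl's monotonicity principle gives $\lambda_j\big(\,|\tfrac{X+Y}{2}|\,\big)\le\lambda_j(P_1)\le\lambda_j(P_2)\le\lambda_j(P_3)$ for every $j$. Since eigenvalues and singular values coincide for positive semidefinite matrices, and since $s_j\big(\tfrac{X+Y}{2}\big)=s_j\big(|\tfrac{X+Y}{2}|\big)$, this establishes the first three asserted inequalities at once.

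It remains to prove the last inequality, namely $s_j(P_3)\le s_j(T)$ with $T=\tfrac{A\sharp_tB+V^*(A\sharp_{1-t}B)V}{2}$, and this is the step demanding care. One cannot invoke a blanket bound $s_j(\Re C)\le s_j(C)$, since that is false for general $C$ (a nilpotent off-diagonal matrix already violates it in the second singular value). The key observation is that $T$ is accretive: by Lemma~\ref{accretivity of A sigma B} both $A\sharp_tB$ and $A\sharp_{1-t}B$ lie in $\Gamma_n^+$, accretivity is preserved under the congruence $C\mapsto V^*CV$ and under addition, so $\Re(T)=P_3\ge O$. Consequently $s_j(P_3)=\lambda_j(P_3)=\lambda_j(\Re(T))$, and the Fan--Hoffman inequality $\lambda_j(\Re(T))\le s_j(T)$ (the eigenvalues of the Hermitian part are dominated termwise by the singular values) then gives $s_j(P_3)\le s_j(T)$, closing the chain. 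The main obstacle is exactly this final inequality: the naive comparison of real parts fails in general, and one must use accretivity to replace the singular values of $\Re(T)$ by its eigenvalues before applying Fan--Hoffman.
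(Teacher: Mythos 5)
Your proposal is correct and follows essentially the same route as the paper, which simply states that the result follows from the argument of Corollary~\ref{absolute value of X+Y ineq-2} together with Weyl's monotonicity principle. Your extra care on the final step --- replacing $s_j(\Re(T))$ by $\lambda_j(\Re(T))$ using positivity of $\Re(T)$ and then invoking the Fan--Hoffman inequality $\lambda_j(\Re(T))\le s_j(T)$ --- is exactly the justification the paper leaves implicit (it appeals to the same fact $\lambda_j(\Re(\cdot))\le s_j(\cdot)$ elsewhere).
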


Extending Lemma \ref{eigenvalues of the abs value of X} to the APT case, we have the following.
\begin{corollary}\label{e.values of (2|X|-R(w-g-mean)) and the s.values of (w-g-mean)}
   Let \(A,B\in\Gamma_n^+,X\in \mathcal{M}_n\) be such that \(M=\begin{bmatrix}
        A & X\\
        X^*& B
    \end{bmatrix}\) is APT. Then, for any \(t\in [0,1]\),
    \[\lambda_j\Big(2\vert X\vert- \Re(A\sharp_t B) \Big)\leq \lambda_j(\Re(A\sharp_{1-t}B))\leq s_j(A\sharp_{1-t}B), j=1,\ldots,n.\] 
\end{corollary}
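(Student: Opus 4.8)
The plan is to reduce the APT statement to the PPT case by passing to the Hermitian part of $M$. The key observation is that since $M=\begin{bmatrix}A&X\\X^*&B\end{bmatrix}$ is APT, its real part $\Re(M)=\begin{bmatrix}\Re(A)&X\\X^*&\Re(B)\end{bmatrix}$ is PPT: indeed $\Re(M)\geq O$ because $M$ is accretive, while $(\Re(M))^{\tau}=\begin{bmatrix}\Re(A)&X^*\\X&\Re(B)\end{bmatrix}=\Re(M^{\tau})\geq O$ because $M^{\tau}$ is accretive. Since $A,B\in\Gamma_n^{+}$, the diagonal blocks $\Re(A),\Re(B)$ are positive definite, so I would apply Lemma \ref{eigenvalues of the abs value of X} to the PPT matrix $\Re(M)$ to obtain, for every $j$ and every $t\in[0,1]$,
\[\lambda_j\bigl(2\vert X\vert-\Re(A)\sharp_t\Re(B)\bigr)\leq\lambda_j\bigl(\Re(A)\sharp_{1-t}\Re(B)\bigr).\]

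Next, I would upgrade the geometric means of the real parts to the real parts of the geometric means by means of Lemma \ref{accretivity of A sigma B}, which yields $\Re(A)\sharp_s\Re(B)\leq\Re(A\sharp_s B)$ for every $s\in[0,1]$. Taking $s=t$ gives $2\vert X\vert-\Re(A\sharp_t B)\leq 2\vert X\vert-\Re(A)\sharp_t\Re(B)$, and taking $s=1-t$ gives $\Re(A)\sharp_{1-t}\Re(B)\leq\Re(A\sharp_{1-t}B)$. All four matrices being Hermitian, Weyl's monotonicity principle ($P\leq Q\Rightarrow\lambda_j(P)\leq\lambda_j(Q)$) converts these operator inequalities into the scalar chain
\[\lambda_j\bigl(2\vert X\vert-\Re(A\sharp_t B)\bigr)\leq\lambda_j\bigl(2\vert X\vert-\Re(A)\sharp_t\Re(B)\bigr)\leq\lambda_j\bigl(\Re(A)\sharp_{1-t}\Re(B)\bigr)\leq\lambda_j\bigl(\Re(A\sharp_{1-t}B)\bigr),\]
whose outer terms are precisely the first asserted inequality. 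For the second inequality $\lambda_j(\Re(A\sharp_{1-t}B))\leq s_j(A\sharp_{1-t}B)$, I would invoke the standard estimate $\lambda_j(\Re(C))\leq s_j(C)$, valid for every $C\in\mathcal{M}_n$, applied to $C=A\sharp_{1-t}B$; this matrix is well defined and accretive since $A,B\in\Gamma_n^{+}$, as guaranteed by Lemma \ref{accretivity of A sigma B}.

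The only delicate point is the bookkeeping of the inequality directions together with the ordering convention for eigenvalues: Weyl's monotonicity must be applied to Hermitian matrices with eigenvalues listed in the same (decreasing) order, and one must check that negating the $\sharp_t$ term reverses the operator inequality in the correct direction. Once these sign conventions are fixed, the proof is a direct concatenation of Lemma \ref{eigenvalues of the abs value of X}, Lemma \ref{accretivity of A sigma B}, Weyl's monotonicity principle, and the estimate $\lambda_j(\Re(C))\leq s_j(C)$, with no genuinely new computation required.
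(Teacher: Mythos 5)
Your proof is correct, but it takes a different route from the paper's. The paper derives this corollary from its own Corollary \ref{absolute value of X+Y ineq-2} (with $Y=X$): it starts from the operator inequality $2|X|\leq \Re(A\sharp_t B)+V^*\,\Re(A\sharp_{1-t}B)\,V$, where $V$ is the polar-decomposition unitary of $X$, rearranges, applies Weyl's monotonicity, and then removes $V$ by unitary invariance of eigenvalues before finishing with $\lambda_j(\Re(C))\leq s_j(C)$. You instead apply the PPT eigenvalue estimate, Lemma \ref{eigenvalues of the abs value of X}, directly to $\Re(M)$ --- which is legitimately PPT, as you verify --- and then upgrade $\Re(A)\sharp_s\Re(B)\leq\Re(A\sharp_s B)$ via Lemma \ref{accretivity of A sigma B} on both sides of the resulting chain. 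Your version is arguably the more natural reduction, since Lemma \ref{eigenvalues of the abs value of X} is precisely the PPT statement being extended, and it avoids introducing the polar decomposition and the unitary $V$ altogether; the paper's version has the advantage of exhibiting the corollary as an immediate consequence of the stronger operator inequality it has already established. All the individual steps you cite (Weyl's monotonicity applied to the Hermitian differences, the sign reversal when subtracting the $\sharp_t$ term, and the Fan--Hoffman-type bound $\lambda_j(\Re(C))\leq s_j(C)$) check out, so there is no gap.
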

 \begin{proof}
   By Corollary \ref{absolute value of X+Y ineq-2}, we have 
   \[2\vert X\vert \leq  \Re(A\sharp_t B) + V^*\, \Re(A\sharp_{1-t} B)\, V,\]
   so that \[2\vert X\vert-\Re(A\sharp_t B) \leq V^* \Re (A\sharp_{1-t} B)\, V .\]
   Thus, by Weyl's monotonicity principle,  \begin{align*}
       \lambda_j\Big(2\vert X\vert- \Re(A\sharp_t B)\Big) & \leq \lambda_j\Big(V^* \Re (A\sharp_{1-t} B)\, V \Big)\\
       &= \lambda_j\Big( \Re (A\sharp_{1-t} B) \Big)\\
       &\leq s_j(A\sharp_{1-t} B)).\\
   \end{align*}
   This completes the proof.
 \end{proof}
An interesting version of Corollary \ref{norm X vs norm of the w-g-mean of A and B} is stated next.
\begin{corollary}
  Let \(A,B\in\Gamma_n^{+},X\in \mathcal{M}_n\) be such that \(M=\begin{bmatrix}
        A & X\\
        X^*& B
    \end{bmatrix}\) is APT. Then, for any \(t\in [0,1]\),  
    \[\Vert X \Vert_u \leq \dfrac{1}{2} \Vert(A\sharp_t\; B) + V^*\, (A\sharp_{1-t}\; B)\, V \Vert_u, \]
    where $V$ is the unitary matrix in the polar decomposition $X=V|X|.$
\end{corollary}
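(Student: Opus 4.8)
The plan is to specialize Corollary \ref{absolute value of X+Y ineq-2} to the case $Y=X$ and then pass from the resulting operator inequality to a unitarily invariant norm inequality using two standard facts about such norms.

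First I would observe that here the block matrix has off-diagonal entries $X$ and $X^{*}$, which is exactly the special case $Y=X$ of Corollary \ref{absolute value of X+Y ineq-2}. In that case $\frac{X+Y}{2}=X$, the polar decomposition $\frac{X+Y}{2}=V\left|\frac{X+Y}{2}\right|$ becomes precisely $X=V|X|$, and Corollary \ref{absolute value of X+Y ineq-2} gives
\[
2|X|\;\leq\;\Re\!\Big((A\sharp_t B)+V^{*}(A\sharp_{1-t}B)V\Big).
\]
Writing $Z=(A\sharp_t B)+V^{*}(A\sharp_{1-t}B)V$ for brevity, note that since $2|X|\geq O$ this inequality forces $\Re(Z)\geq O$ as well, so both sides are positive semi-definite Hermitian matrices with $O\leq 2|X|\leq \Re(Z)$.

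Next I would invoke the monotonicity of unitarily invariant norms on the positive cone: from $O\leq 2|X|\leq \Re(Z)$, Weyl's monotonicity principle gives $\lambda_j(2|X|)\leq \lambda_j(\Re(Z))$, and since eigenvalues and singular values coincide for positive semi-definite matrices we get $s_j(2|X|)\leq s_j(\Re(Z))$ for all $j$, whence $\|2|X|\|_u\leq \|\Re(Z)\|_u$. On the left, $\|2|X|\|_u=2\||X|\|_u=2\|X\|_u$, using $\|X\|_u=\|V|X|\|_u=\||X|\|_u$ for any unitarily invariant norm. On the right, the triangle inequality together with $\|Z^{*}\|_u=\|Z\|_u$ yields $\|\Re(Z)\|_u=\big\|\tfrac{Z+Z^{*}}{2}\big\|_u\leq \|Z\|_u$. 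Combining these,
\[
2\|X\|_u\;=\;\|2|X|\|_u\;\leq\;\|\Re(Z)\|_u\;\leq\;\|Z\|_u,
\]
which is precisely the asserted bound after dividing by $2$.

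I do not expect any genuine obstacle here: the substance of the statement is carried entirely by Corollary \ref{absolute value of X+Y ineq-2}, and the remaining work is the standard reduction from an operator inequality to a norm inequality via norm monotonicity on positive matrices and the inequality $\|\Re(Z)\|_u\leq \|Z\|_u$. The only point deserving a moment's care is confirming that $\Re(Z)$ is positive semi-definite, which is automatic once the operator inequality of Corollary \ref{absolute value of X+Y ineq-2} is in hand.
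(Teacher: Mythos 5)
Your proof is correct and follows essentially the same route as the paper: both deduce the result from Corollary \ref{absolute value of X+Y ineq-2} with $Y=X$ and then convert the operator inequality $2|X|\leq \Re\big(A\sharp_t B+V^*(A\sharp_{1-t}B)V\big)$ into the norm bound. The only cosmetic difference is that the paper passes through singular values via $\lambda_j(\Re(Z))\leq s_j(Z)$, while you use the triangle inequality $\|\Re(Z)\|_u\leq\|Z\|_u$ at the norm level; both are standard and equivalent for this purpose.
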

\begin{proof}
    Notice that, by Corollary \ref{absolute value of X+Y ineq-2},
\begin{align*}
    2 s_j\Big( \dfrac{X+Y}{2} \Big) &= 2\lambda_j\Big(\Big\vert \dfrac{X+Y}{2}\Big\vert \Big) \\
    & \leq 2\lambda_j\Big( \, \Re(A\sharp_t B) \sharp \Re\Big(V^*\, (A\sharp_{1-t} B)\, V\Big)\,\Big)\\
    &\leq \lambda_j\Big( \, \Re\Big((A\sharp_t B) + V^*\, (A\sharp_{1-t} B)\, V\Big)\, \Big)\\
    & \leq s_j\Big(A\sharp_t B + V^*\, (A\sharp_{1-t} B)\, V\Big).\\
\end{align*}
Letting $Y=X$ completes the proof.
\end{proof}
Now we have the following extension of Lemma \ref{positivity of sigma(M)} to the APT case.

\begin{theorem}\label{accretivity of sigma(M)}
    Let \(A,B\in\Gamma_n^{+},X\in \mathcal{M}_n\) be such that \(M=\begin{bmatrix}
        A & X\\
        X^*& B
    \end{bmatrix}\) is APT. Then \(\begin{bmatrix}
        A\sigma^* B & X\\
        X^* & B\sigma A
    \end{bmatrix}\) is APT, for any matrix mean $\sigma$. 
\end{theorem}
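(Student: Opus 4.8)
The plan is to reduce the APT statement to the PPT result of Lemma \ref{positivity of sigma(M)} by passing to real parts, and then to restore the accretive means using the comparison inequalities of Lemma \ref{accretivity of A sigma B}. Since $M$ is APT, both $M$ and $M^\tau$ are accretive, so $\Re(M)\geq O$ and $(\Re(M))^\tau=\Re(M^\tau)\geq O$; that is, $\Re(M)=\begin{bmatrix}\Re(A)&X\\X^*&\Re(B)\end{bmatrix}$ is itself a PPT matrix. Applying Lemma \ref{positivity of sigma(M)} to $\Re(M)$, whose diagonal blocks $\Re(A),\Re(B)$ are positive definite because $A,B\in\Gamma_n^{+}$, yields that
\[
\begin{bmatrix}\Re(A)\sigma^*\Re(B)&X\\X^*&\Re(B)\sigma\Re(A)\end{bmatrix}
\]
is PPT, so both this matrix and its partial transpose are positive semidefinite.

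Next I would upgrade the diagonal blocks from the real parts of the means to the real parts of the accretive means. Since the adjoint $\sigma^*$ of a matrix mean is again a matrix mean, Lemma \ref{accretivity of A sigma B} applies to both $\sigma$ and $\sigma^*$ and gives
\[
\Re(A\sigma^* B)\geq \Re(A)\sigma^*\Re(B)\qquad\text{and}\qquad \Re(B\sigma A)\geq \Re(B)\sigma\Re(A).
\]
Writing $N=\begin{bmatrix}A\sigma^* B&X\\X^*&B\sigma A\end{bmatrix}$, its real part decomposes as
\[
\Re(N)=\begin{bmatrix}\Re(A)\sigma^*\Re(B)&X\\X^*&\Re(B)\sigma\Re(A)\end{bmatrix}+\begin{bmatrix}\Re(A\sigma^* B)-\Re(A)\sigma^*\Re(B)&O\\O&\Re(B\sigma A)-\Re(B)\sigma\Re(A)\end{bmatrix}.
\]
The first summand is positive semidefinite by the previous paragraph, and the second is a positive semidefinite block-diagonal matrix by the two displayed inequalities; hence $\Re(N)\geq O$, i.e.\ $N$ is accretive. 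Replacing the off-diagonal pair $X,X^*$ by $X^*,X$ throughout, and using that the partial transpose of the PPT matrix above is also positive semidefinite, gives $\Re(N^\tau)\geq O$ in exactly the same way. Therefore both $N$ and $N^\tau$ are accretive, which is precisely the assertion that $N$ is APT.

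The argument is essentially routine once this reduction is in place; the one point requiring care is the justification that $\sigma^*$ is a matrix mean, so that Lemma \ref{accretivity of A sigma B} may legitimately be invoked for the $(1,1)$ block. This holds because the representing function $f^*(x)=1/f(1/x)$ of $\sigma^*$ is operator monotone on $(0,\infty)$ with $f^*(1)=1$ whenever $f$ is, so no separate accretive comparison result is needed. I expect the decomposition-into-a-sum step to be the cleanest route, since it avoids any appeal to the Schur-complement criterion of Lemma \ref{lemma_pos_inv}; the alternative through Lemma \ref{lemma_pos_inv} would additionally require verifying positive definiteness of the new diagonal blocks, which, although true because $A\sigma^* B,\,B\sigma A\in\Gamma_n^{+}$ by Lemma \ref{accretivity of A sigma B}, is an extra check that the additive argument sidesteps.
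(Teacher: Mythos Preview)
Your proof is correct and takes a genuinely different route from the paper's. The paper works entirely through the Schur-complement criterion of Lemma~\ref{lemma_pos_inv}: from $\Re(M)\ge O$ it extracts $X\Re(B)^{-1}X^*\le\Re(A)$ and $X\Re(A)^{-1}X^*\le\Re(B)$, then bounds $X\bigl(\Re(B\sigma A)\bigr)^{-1}X^*$ via the chain
\[
\Re(B\sigma A)\ge\Re(B)\sigma\Re(A),\quad (\Re(B)\sigma\Re(A))^{-1}=\Re(B)^{-1}\sigma^*\Re(A)^{-1},\quad \text{then \eqref{eq_conj_sig} and monotonicity of }\sigma^*,
\]
finally invoking Lemma~\ref{accretivity of A sigma B} once more to reach $\Re(A\sigma^*B)$. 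You instead apply Lemma~\ref{positivity of sigma(M)} to the PPT matrix $\Re(M)$ as a black box and then correct the diagonal blocks additively using Lemma~\ref{accretivity of A sigma B}. Your route is shorter and more modular, and it makes the reduction to the PPT case completely transparent; the paper's route is more self-contained in that it does not quote Lemma~\ref{positivity of sigma(M)} but effectively reproves it inline. Both arguments ultimately rely on $\sigma^*$ being a matrix mean (the paper uses this implicitly when it applies monotonicity of $\sigma^*$), and you were right to flag and justify that point.
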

\begin{proof}

Since \(M=\begin{bmatrix}
        A & X\\
        X^*& B
    \end{bmatrix}\) is APT, it follows that \(\begin{bmatrix}
        \Re(A) & X\\
        X^*& \Re(B)
    \end{bmatrix}, \begin{bmatrix}
        \Re(A) & X^*\\
        X& \Re(B)
    \end{bmatrix} \geq O.\)
    Applying Lemma \ref{lemma_pos_inv} on these two block forms implies that
    \begin{equation}\label{eq_ned_1}
    X\Re(B)^{-1}X^*\leq \Re(A), X\Re(A)^{-1}X^*\leq\Re(B).
    \end{equation}
  \(\begin{bmatrix}
        A\sigma^* B & X\\
        X^* & B\sigma A
    \end{bmatrix}\) is accretive if  \(\begin{bmatrix}
        \Re(A\sigma^* B) & X\\
        X^* & \Re(B\sigma A)
    \end{bmatrix}\geq O\). But
    \begin{align*}
        X \Big(\Re(B\sigma A)\Big)^{-1} X^* &\leq  X\Big( \Re(B)\sigma \Re(A)\Big)^{-1} X^* \quad \text{ (by Lemma \ref{accretivity of A sigma B})}\\
        &=   X \Big(\, \Re(B)^{-1}\sigma^* \Re(A)^{-1}\, \Big) X^* \\
        &\leq \Big( X\Re(B)^{-1} X^* \Big) \sigma^* \Big(X \Re(A)^{-1} X^*\Big) \quad ({\text{by}}\;\eqref{eq_conj_sig})\\
        &\leq \Re(A) \sigma^* \Re(B) \quad (\text{by\;\eqref{eq_ned_1}})\\
        & \leq \Re(A \sigma^* B)\quad (\text{by Lemma \ref{accretivity of A sigma B}}).\\
    \end{align*}
    Lemma \ref{lemma_pos_inv} then implies that \(\begin{bmatrix}
        \Re(A\sigma^* B) & X\\
        X^* & \Re(B\sigma A)
    \end{bmatrix}\geq O\), and hence \(\begin{bmatrix}
        A\sigma^* B & X\\
        X^* & B\sigma A
    \end{bmatrix}\) is accretive.    Similarly, one can show that \(\begin{bmatrix}
        A\sigma^* B & X^*\\
        X & B\sigma A
    \end{bmatrix}\) is accretive, which means that \(\begin{bmatrix}
        A\sigma^* B & X\\
        X^* & B\sigma A
    \end{bmatrix}\) is APT..
\end{proof}

The following lemma is needed to complete the proof of the next main result, which discusses a sufficient condition that \(\begin{bmatrix}
        f(A)\nabla_t f(B) & X\\
        X^* & f(A \nabla_tB )
    \end{bmatrix}\) is APT.

\begin{lemma}\label{positivity of the block matrix f(R(M))}
   Let \(A,B,X\in \mathcal{M}_n\) be such that \(A,B\in \Pi_{n,\alpha}^{+}\) for some \(\alpha\in [0,\pi/2)\), and let \(f\in\mathfrak{m}\) be such that \(\begin{bmatrix}
        \cos^2(\alpha) f(A)  & X\\
        X^* & \cos^2(\alpha) f(B)
    \end{bmatrix}\) is APT. Then \(\begin{bmatrix}
        f(\Re(A)) & X\\
        X^* & f(\Re(B) )
    \end{bmatrix}\) is PPT.
\end{lemma}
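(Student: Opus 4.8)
The plan is to translate the APT hypothesis on the scaled block into two positive semidefinite inequalities and then to inflate the diagonal entries using the sharp comparison between $\Re(f(\cdot))$ and $f(\Re(\cdot))$ recorded in Lemma \ref{Copmarision bw R(f(A)) and f(R(A))}. First I would unpack what it means for $M=\begin{bmatrix} \cos^2(\alpha) f(A) & X \\ X^* & \cos^2(\alpha) f(B) \end{bmatrix}$ to be APT: by definition, both $M$ and $M^{\tau}$ are accretive, so $\Re(M)\geq O$ and $\Re(M^{\tau})\geq O$. Since taking real parts acts blockwise on the diagonal and leaves the off-diagonal blocks untouched, these two conditions read
$$\begin{bmatrix} \cos^2(\alpha)\Re(f(A)) & X \\ X^* & \cos^2(\alpha)\Re(f(B)) \end{bmatrix}\geq O \quad\text{and}\quad \begin{bmatrix} \cos^2(\alpha)\Re(f(A)) & X^* \\ X & \cos^2(\alpha)\Re(f(B)) \end{bmatrix}\geq O.$$

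Next I would invoke Lemma \ref{Copmarision bw R(f(A)) and f(R(A))}, applied to $A,B\in\Pi_{n,\alpha}^{+}$ and $f\in\mathfrak{m}$, to get $\cos^2(\alpha)\Re(f(A))\leq f(\Re(A))$ and $\cos^2(\alpha)\Re(f(B))\leq f(\Re(B))$; here $f(\Re(A))$ and $f(\Re(B))$ are genuine positive definite matrices since $\Re(A),\Re(B)>O$. The key elementary observation is that enlarging the diagonal blocks of a positive semidefinite block matrix preserves positivity: if $\begin{bmatrix} C & X \\ X^* & D \end{bmatrix}\geq O$ with $C\leq C'$ and $D\leq D'$, then $\begin{bmatrix} C' & X \\ X^* & D' \end{bmatrix}=\begin{bmatrix} C & X \\ X^* & D \end{bmatrix}+\begin{bmatrix} C'-C & O \\ O & D'-D \end{bmatrix}\geq O$, being a sum of two positive semidefinite matrices. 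Applying this to each of the two displayed blocks, with $C=\cos^2(\alpha)\Re(f(A))$, $D=\cos^2(\alpha)\Re(f(B))$, $C'=f(\Re(A))$, $D'=f(\Re(B))$, upgrades them to
$$\begin{bmatrix} f(\Re(A)) & X \\ X^* & f(\Re(B)) \end{bmatrix}\geq O \quad\text{and}\quad \begin{bmatrix} f(\Re(A)) & X^* \\ X & f(\Re(B)) \end{bmatrix}\geq O,$$
which is exactly the claim that $\begin{bmatrix} f(\Re(A)) & X \\ X^* & f(\Re(B)) \end{bmatrix}$ is PPT.

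I do not expect a serious computational obstacle; the only point demanding care is the direction of the inequality in Lemma \ref{Copmarision bw R(f(A)) and f(R(A))}. The factor $\cos^2(\alpha)$ built into the hypothesis is precisely what makes $\cos^2(\alpha)\Re(f(A))$ the \emph{smaller} quantity, so that it is the one sitting inside the positive semidefinite block to be dominated; without this scaling the comparison would run the wrong way and the diagonal-inflation step would break down. Recognizing that this $\cos^2(\alpha)$ is the sharp constant furnished by the lemma is really the crux of the argument.
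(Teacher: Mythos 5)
Your proof is correct, and it takes a genuinely different route from the paper's. Both arguments start identically, by unpacking the APT hypothesis into the two positive semidefinite block conditions and then invoking the inequality $\cos^2(\alpha)\,\Re(f(\cdot))\leq f(\Re(\cdot))$ from Lemma \ref{Copmarision bw R(f(A)) and f(R(A))}. Where they diverge is in how the diagonal blocks get enlarged: the paper passes through the Schur complement criterion of Lemma \ref{lemma_pos_inv}, extracting $X\bigl(\cos^2(\alpha)\Re(f(B))\bigr)^{-1}X^*\leq \cos^2(\alpha)\Re(f(A))$, then using operator anti-monotonicity of the inverse to replace $\bigl(\cos^2(\alpha)\Re(f(B))\bigr)^{-1}$ by the smaller $\bigl(f(\Re(B))\bigr)^{-1}$, and finally reassembling the block via Lemma \ref{lemma_pos_inv} again. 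You instead use the elementary additive decomposition
\[
\begin{bmatrix} C' & X \\ X^* & D' \end{bmatrix}=\begin{bmatrix} C & X \\ X^* & D \end{bmatrix}+\begin{bmatrix} C'-C & O \\ O & D'-D \end{bmatrix},
\]
which settles the matter in one line. Your version is shorter, avoids inverting anything, and would survive even if the diagonal blocks were merely positive semidefinite rather than positive definite; the paper's version has the mild advantage of producing the explicit Schur-complement inequalities \eqref{positivity of R(N)-1} and \eqref{positivity of R(N)-2} along the way, which fit the template reused throughout the rest of the paper. Your closing remark about the role of the $\cos^2(\alpha)$ scaling correctly identifies the crux shared by both arguments.
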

\begin{proof}
Since the matrix 
\(\begin{bmatrix}
        \cos^2(\alpha) f(A)  & X\\
        X^* & \cos^2(\alpha) f(B)
    \end{bmatrix}\) is APT, it follows that \[\begin{bmatrix}
        \cos^2(\alpha) \Re(f(A))  & X\\
        X^* & \cos^2(\alpha) \Re(f(B))
    \end{bmatrix}, \begin{bmatrix}
        \cos^2(\alpha) \Re(f(A))  & X^*\\
        X & \cos^2(\alpha) \Re(f(B))
    \end{bmatrix} \geq O.\]
    Applying Lemma \ref{lemma_pos_inv} on these two block forms implies that
    \begin{equation}\label{positivity of R(N)-1}
          X (\cos^2(\alpha) \Re(f(B)))^{-1} X^* \leq \cos^2(\alpha) \Re(f(A))
    \end{equation}
    and
    \begin{equation}\label{positivity of R(N)-2}
       X^* (\cos^2(\alpha) \Re(f(B)))^{-1} X \leq \cos^2(\alpha) \Re(f(A))
    \end{equation}
Now, 
\begin{align*}
    X \Big(f(\Re(B))\Big)^{-1} X^* & \leq X \Big(\cos^2(\alpha) \Re(f(B))\Big)^{-1} X^* \quad \text{ (by Lemma \ref{Copmarision bw R(f(A)) and f(R(A))})}\\
    & \leq \cos^2(\alpha) \Re(f(A)) \quad \text{(by \eqref{positivity of R(N)-1}) }\\
    & \leq  f(\Re(A)) \quad \text{(by Lemma \ref{Copmarision bw R(f(A)) and f(R(A))})}.
\end{align*}
Similarly,
\begin{align*}
    X^* \Big(f(\Re(B))\Big)^{-1} X & \leq X^* \Big(\cos^2(\alpha) \Re(f(B))\Big)^{-1} X  \quad \text{ (by Theorem \ref{Copmarision bw R(f(A)) and f(R(A))})}\\
    & \leq \cos^2(\alpha) \Re(f(A))  \quad \text{ (by \eqref{positivity of R(N)-2}) }\\
    & \leq f(\Re(A)) \quad \text{ (by Lemma \ref{Copmarision bw R(f(A)) and f(R(A))})}.
\end{align*}
That is, \(\begin{bmatrix}
        f(\Re(A)) & X\\
        X^* & f(\Re(B) )
    \end{bmatrix}, \begin{bmatrix}
        f(\Re(A)) & X^*\\
        X & f(\Re(B)).
    \end{bmatrix}\geq O\). This implies the desired conclusion.
\end{proof}

\begin{theorem}\label{accretivity of f(M)}
    Let \(A,B,X\in \mathcal{M}_n\) be such that \(A,B\in \Pi_{n,\alpha}^{+}\) for some \(\alpha\in [0,\pi/2)\), and let \(f\in\mathfrak{m}\) be  such that \(\begin{bmatrix}
        \cos^2(\alpha) f(A)  & X\\
        X^* & \cos^2(\alpha) f(B)
    \end{bmatrix}\) is APT. Then \(\begin{bmatrix}
        f(A)\nabla_t f(B) & X\\
        X^* & f(A \nabla_tB )
    \end{bmatrix}\) is APT for any \(0\leq t\leq 1\).
\end{theorem}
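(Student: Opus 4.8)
The plan is to reduce the APT claim to the positive (PPT) statement of Lemma \ref{positivity of f(M)}, applied not to $A,B$ themselves but to their real parts $\Re(A),\Re(B)$, and then to promote the resulting positivity back to accretivity by comparing $\Re(f(\cdot))$ with $f(\Re(\cdot))$ through Lemma \ref{Copmarision bw R(f(A)) and f(R(A))}. First I would record the standing observation that a block matrix $\begin{bmatrix} P & X\\ X^* & Q\end{bmatrix}$ with $P,Q$ accretive is APT precisely when both $\begin{bmatrix} \Re(P) & X\\ X^* & \Re(Q)\end{bmatrix}\geq O$ and $\begin{bmatrix} \Re(P) & X^*\\ X & \Re(Q)\end{bmatrix}\geq O$. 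Thus, writing $P=f(A)\nabla_t f(B)$ and $Q=f(A\nabla_t B)$, it suffices to establish these two positivity statements.

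Next I would produce the relevant PPT matrix. Since $f\in\mathfrak{m}$ is operator monotone, it is operator concave, so Lemma \ref{positivity of f(M)} is available. Under the hypothesis that $\begin{bmatrix}\cos^2(\alpha)f(A) & X\\ X^* & \cos^2(\alpha)f(B)\end{bmatrix}$ is APT, the helper Lemma \ref{positivity of the block matrix f(R(M))} already yields that $\begin{bmatrix} f(\Re(A)) & X\\ X^* & f(\Re(B))\end{bmatrix}$ is PPT. Because $A,B\in\Pi_{n,\alpha}^{+}$ gives $\Re(A),\Re(B)>O$, Lemma \ref{positivity of f(M)} applies verbatim to this block matrix (with the positive definite matrices $\Re(A),\Re(B)$ in place of $A,B$), and delivers that
\[\begin{bmatrix} f(\Re(A))\nabla_t f(\Re(B)) & X\\ X^* & f(\Re(A)\nabla_t\Re(B))\end{bmatrix}\]
is PPT, i.e.\ both it and its partial transpose are positive semidefinite.

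Finally I would bridge back to the accretive target. Note that $A\nabla_t B\in\Pi_{n,\alpha}^{+}$, since $S_\alpha$ is convex and $\langle (A\nabla_t B)x,x\rangle$ is a convex combination of points of $S_\alpha$. Using linearity of $\Re$ and of $\nabla_t$, together with the comparison $\Re(f(C))\geq f(\Re(C))$ from Lemma \ref{Copmarision bw R(f(A)) and f(R(A))}, I obtain
\begin{align*}
\Re(f(A)\nabla_t f(B)) &= \Re(f(A))\nabla_t\Re(f(B)) \geq f(\Re(A))\nabla_t f(\Re(B)),\\
\Re(f(A\nabla_t B)) &\geq f(\Re(A\nabla_t B)) = f(\Re(A)\nabla_t\Re(B)),
\end{align*}
the last equality holding because $\Re(A\nabla_t B)=\Re(A)\nabla_t\Re(B)$. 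Hence the target real-part block matrix differs from the PPT block matrix of the previous paragraph by a positive block-diagonal term, and adding such a term preserves positive semidefiniteness; the identical domination holds after swapping $X$ and $X^*$. Both required positivity statements therefore follow, and the matrix is APT.

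The main obstacle I anticipate is not the comparison algebra, which is routine once Lemma \ref{Copmarision bw R(f(A)) and f(R(A))} is in hand, but the clean passage through Lemma \ref{positivity of f(M)}: one must verify that membership $f\in\mathfrak{m}$ (operator monotone on $(0,\infty)$) genuinely supplies the operator concavity required there, and that the helper Lemma \ref{positivity of the block matrix f(R(M))} correctly converts the hypothesis phrased in terms of $\cos^2(\alpha)f(A)$ and $\cos^2(\alpha)f(B)$ into the PPT statement for $f(\Re(A))$ and $f(\Re(B))$. This is exactly the step where the $\cos^2(\alpha)$ normalization in the hypothesis is consumed, and where the sectorial parameter $\alpha$ enters the argument.
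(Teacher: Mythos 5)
Your proposal is correct and follows essentially the same route as the paper: both arguments reduce to the PPT statement by feeding Lemma \ref{positivity of the block matrix f(R(M))} into Lemma \ref{positivity of f(M)} applied to $\Re(A),\Re(B)$, and then promote positivity back to accretivity via $f(\Re(\cdot))\le\Re(f(\cdot))$ from Lemma \ref{Copmarision bw R(f(A)) and f(R(A))}. The only divergence is the final step, where the paper chains Schur-complement inequalities through $X\big(\Re(f(A\nabla_t B))\big)^{-1}X^*$ whereas you observe that the target real-part block matrix dominates the known PPT matrix by a positive block-diagonal term; your variant is equally valid and sidesteps the inverse comparisons.
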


\begin{proof}
Since the matrix 
\( \begin{bmatrix}
        \cos^2(\alpha) f(A)  & X\\
        X^* & \cos^2(\alpha) f(B)
    \end{bmatrix}\) is APT, Lemma \ref{positivity of the block matrix f(R(M))} implies that the matrix 
    \(\begin{bmatrix}
        f(\Re(A)) & X\\
        X^* & f(\Re(B) )
    \end{bmatrix}\) is PPT. Consequently, by Lemma \ref{positivity of f(M)}, the matrix 
    \(\begin{bmatrix}
        f(\Re(A))\nabla_t f(\Re(B)) & X\\
        X^* & f(\Re(A \nabla_tB ))
    \end{bmatrix}\) is PPT for any \(t\in[0,1]\), which implies  
    \begin{equation}\label{arith. mean ineq}
        X \Big(f(\Re(A)\nabla_t\, \Re(B))\Big)^{-1} X^* \leq f(\Re(A)) \nabla_t \, f(\Re(B)).   
        \end{equation}
    Now, \(\begin{bmatrix}
        f(A)\nabla_t f(B) & X\\
        X^* & f(A \nabla_tB )
    \end{bmatrix}\) is APT if \(\begin{bmatrix}
        \Re(f(A)\nabla_t f(B)) & X\\
        X^* & \Re(f(A \nabla_tB ))
    \end{bmatrix}\) is PPT. 
    But, 
        \begin{equation*}
        \begin{split}
            X \Big(\Re(f(A\nabla_t\, B))\Big)^{-1} X^* &\leq X \Big(f(\Re(A\nabla_t\, B))\Big)^{-1} X^* \quad \text{ (by  Lemma \ref{Copmarision bw R(f(A)) and f(R(A))})}\\
        &= X \Big(f\Big(\Re(A)\nabla_t\, \Re(B)\Big)\Big)^{-1} X^*\\
        &\leq f(\Re(A)) \nabla_t \, f(\Re(B)) \quad \text{ (by \eqref{arith. mean ineq})}\\
        &\leq \Re(f(A)) \nabla_t\, \Re(f(B)) \quad\text{ (by  Lemma \ref{Copmarision bw R(f(A)) and f(R(A))})}\\
        & = \Re(f(A)\nabla_t\, f(B)).\\
        \end{split}
        \end{equation*}  
Similarly, one can show that 
\[ X^* \Big(\Re(f(A\nabla_t\, B))\Big)^{-1} X\leq \Re(f(A)\nabla_t\, f(B)),\]
which completes the proof.
\end{proof}

Let $A,B,X\in\mathcal{M}_n$. If $A,B> O$, and  \(\Vert X\Vert \leq \Vert A\sharp B\Vert\),  we say that Schwarz inequality holds for the triple \((A,B,X)\), in order.\\
In \cite{ando2016geometric}, Ando established certain conditions on the matrices \(A,B,X\in \mathcal{M}_n\) under which the positivity of the block matrix \(M=\begin{bmatrix}
        A & X\\
        X^*& B
    \end{bmatrix}\) ensures that Schwarz inequality holds for \((A,B,X)\), as follows.
\begin{lemma}\cite[Theorem 3.6]{ando2016geometric}]\label{When does Schwarz inequality hold for postive A and B}
   Let \(A,B,X\in \mathcal{M}_n\) be such that $A,B>O$ and that \(M= \begin{bmatrix}
        A & X\\
        X^*& B
    \end{bmatrix}\geq O\). Then Schwarz inequality holds for \((A,B,X)\) if any  of the following conditions holds:
    \begin{enumerate}
        \item[1.] \(AX=XA\).
        \item[2.] \(X^* A^{-1} X= X A^{-1}X^*\).
        \item[3.]  \(\exists k>0: B= kA\).
    \end{enumerate}
\end{lemma}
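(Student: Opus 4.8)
The plan is to reduce all three cases to a single Hermitian fact and to use that each hypothesis converts the non-Hermitian datum $X$ into something this fact can control. \emph{Key step (the Hermitian case).} I would first prove that if $H=H^{*}$ and $\begin{bmatrix}A&H\\H&B\end{bmatrix}\geq O$, then $\|H\|\leq\|A\sharp B\|$. Conjugating by the positive definite matrix $\operatorname{diag}(A^{-1/2},A^{-1/2})$ turns this block matrix into $\begin{bmatrix}I&H'\\H'&\widetilde B\end{bmatrix}\geq O$ with $H'=A^{-1/2}HA^{-1/2}$ and $\widetilde B=A^{-1/2}BA^{-1/2}$; Lemma~\ref{lemma_pos_inv} then gives $H'^{2}\leq\widetilde B$, so operator monotonicity of the square root yields $H'\leq|H'|\leq\widetilde B^{1/2}=I\sharp\widetilde B$. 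Conjugating back by $A^{1/2}$ preserves the Löwner order and, by the congruence identity $S^{*}(A\sharp B)S=(S^{*}AS)\sharp(S^{*}BS)$ recorded in the preliminaries, sends $I\sharp\widetilde B$ to $A\sharp B$; hence $H\leq A\sharp B$. Applying the same argument to $-H$ (legitimate since conjugation by $\operatorname{diag}(I,-I)$ keeps the block matrix positive) gives $-H\leq A\sharp B$, so that $-A\sharp B\leq H\leq A\sharp B$ and therefore $\|H\|\leq\|A\sharp B\|$. The conceptual obstacle this must overcome is that positivity of $M$ is congruence invariant while $\|A\sharp B\|$ is \emph{not}, so one cannot simply normalise $A=I$ in the original problem; each hypothesis below is precisely a device for by-passing this.

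For condition~1, I would pass to the polar decomposition $X=U|X|$. Since $AX=XA$, the matrix $A$ commutes with $X^{*}X=|X|^{2}$, hence with $|X|$ and, for invertible $X$, with $U=X|X|^{-1}$. Conjugating $M$ by the unitary $\operatorname{diag}(U^{*},I)$ then fixes the diagonal, because $U^{*}AU=A$, and replaces the off-diagonal entry $X$ by $U^{*}X=|X|$. Thus $\begin{bmatrix}A&|X|\\|X|&B\end{bmatrix}\geq O$, and the Hermitian step gives $|X|\leq A\sharp B$, whence $\|X\|=\||X|\|\leq\|A\sharp B\|$. The non-invertible case is disposed of by extending the partial isometry appropriately, or by a limiting argument in the spirit of Remark~\ref{rem_1}.

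For condition~2, I would first record the clean reformulation that $X^{*}A^{-1}X=XA^{-1}X^{*}$ is equivalent to $W:=A^{-1/2}XA^{-1/2}$ being \emph{normal}. Conjugating $M$ by $\operatorname{diag}(A^{-1/2},A^{-1/2})$ brings it to $\begin{bmatrix}I&W\\W^{*}&\widetilde B\end{bmatrix}\geq O$ with $\widetilde B=A^{-1/2}BA^{-1/2}$, and Lemma~\ref{lemma_pos_inv} gives $|W|^{2}=W^{*}W\leq\widetilde B$, hence $|W|\leq\widetilde B^{1/2}$. The crucial estimate is that for a \emph{normal} $W$ and any positive $S$ one has $\|SWS\|\leq\|S|W|S\|$: writing $W=|W|^{1/2}V|W|^{1/2}$ with $V$ unitary commuting with $|W|$, and using $\langle SWSu,v\rangle=\langle V|W|^{1/2}Su,\,|W|^{1/2}Sv\rangle$, a Cauchy--Schwarz bound gives $\|SWS\|\leq\||W|^{1/2}S\|^{2}=\|S|W|S\|$. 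Taking $S=A^{1/2}$ yields
\[
\|X\|=\|A^{1/2}WA^{1/2}\|\leq\|A^{1/2}|W|A^{1/2}\|\leq\|A^{1/2}\widetilde B^{1/2}A^{1/2}\|=\|A\sharp B\|,
\]
using $|W|\leq\widetilde B^{1/2}$, monotonicity of the norm on positive matrices (part~\eqref{monotonocity of the w-g-m} of Lemma~\ref{w-g-mean of positive matrices} and $P\le Q\Rightarrow\|P\|\le\|Q\|$), and $A^{1/2}\widetilde B^{1/2}A^{1/2}=A\sharp B$. Condition~3 is then immediate in the same coordinates: $B=kA$ gives $\widetilde B=kI$, so $W^{*}W\leq kI$ and $\|X\|=\|A^{1/2}WA^{1/2}\|\leq\|A\|\,\|W\|\leq\sqrt{k}\,\|A\|=\|A\sharp B\|$, since $A\sharp(kA)=\sqrt{k}\,A$.

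The main obstacle is exactly the one flagged in the first paragraph: because $\|A\sharp B\|$ is not congruence invariant there is no uniform normalisation, and the Hermitian reduction (maximality of the geometric mean) is the common engine into which each case must be fed. The genuinely delicate step is condition~2, where the normality of $W$ has to be converted into the norm comparison $\|SWS\|\leq\|S|W|S\|$; conditions~1 and~3 feed into the Hermitian fact more directly, respectively through a unitary polar rotation and through the scalar relation $A\sharp(kA)=\sqrt{k}\,A$.
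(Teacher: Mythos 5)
The paper offers no proof of this lemma at all: it is quoted directly from Ando's article (Theorem 3.6 of the cited reference), so there is no in-paper argument to compare yours against. On its own merits, your proof is correct. The Hermitian key step is exactly the maximality characterization of the geometric mean: from $\begin{bmatrix}A&H\\H&B\end{bmatrix}\geq O$ with $H=H^{*}$ you correctly extract $H'^{2}\leq\widetilde B$, hence $\pm H\leq A\sharp B$ and $\|H\|\leq\|A\sharp B\|$. Your three reductions to it are all sound: in case~1 the commutation $AX=XA$ passes (via $X^{*}A=AX^{*}$) to $|X|$ and to the polar factor, so conjugation by $\mathrm{diag}(U,I)$ replaces $X$ by $|X|$ without disturbing $A$; in case~2 the hypothesis $X^{*}A^{-1}X=XA^{-1}X^{*}$ is precisely normality of $W=A^{-1/2}XA^{-1/2}$, and your Cauchy--Schwarz estimate $\|SWS\|\leq\|S\,|W|\,S\|$ for normal $W$ and $S\geq O$, combined with $|W|\leq\widetilde B^{1/2}$ and $A^{1/2}\widetilde B^{1/2}A^{1/2}=A\sharp B$, closes the argument; case~3 is the identity $A\sharp(kA)=\sqrt{k}\,A$. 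The only point you leave informal is the completion, when $X$ is singular in case~1, of the polar partial isometry to a unitary $U$ commuting with $A$. This does go through in $\mathcal{M}_n$: since $AX=XA$, the matrix $X$ is block-diagonal with respect to the spectral projections of $A$, and within each square diagonal block the kernels of $X_i$ and $X_i^{*}$ have equal dimension, so the completing isometry can be chosen blockwise and hence commuting with $A$; alternatively one can perturb $A$ to have simple spectrum only after this reduction, but the blockwise argument is cleaner. With that detail recorded, your proof is a complete, self-contained substitute for the citation.
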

In Theorem \ref{When does Schwarz inequality hold for accretive A and B} we extend Theorem \ref{When does Schwarz inequality hold for postive A and B} to the accretive case. 

\begin{theorem}\label{When does Schwarz inequality hold for accretive A and B}
   Let \(A,B,X\in\mathcal{M}_n\) be such that \(A\) is normal. Assume further that \(M= \begin{bmatrix}
        A & X\\
        X^*& B
    \end{bmatrix}\) is accretive, with $A,B\in\Gamma_n^+$. Then \(\Vert X\Vert \leq \Vert A\sharp B\Vert\) if one of the following conditions holds:
    \begin{enumerate}
        \item[1.] \(AX=XA\).
        \item[2.] \(X^* A^{-1} X= X A^{-1}X^*\).
        \item[3.]  \(\exists k>0 : B= kA\).
    \end{enumerate}
\end{theorem}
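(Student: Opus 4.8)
The plan is to pass to real parts and reduce everything to Ando's positive-case result (Lemma~\ref{When does Schwarz inequality hold for postive A and B}). Since $M$ is accretive, $\Re(M)=\begin{bmatrix}\Re(A)&X\\X^*&\Re(B)\end{bmatrix}\geq O$ is a genuine positive block with $\Re(A),\Re(B)>O$. I would first record the norm chain that lets the positive case control the accretive one: by Lemma~\ref{accretivity of A sigma B} one has $\Re(A)\sharp\Re(B)\leq\Re(A\sharp B)$, and since the spectral norm is monotone on positive matrices and $\|\Re(C)\|\leq\|C\|$ for every $C$, it follows that $\|\Re(A)\sharp\Re(B)\|\leq\|\Re(A\sharp B)\|\leq\|A\sharp B\|$. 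Hence the backbone of the argument is to establish the single inequality $\|X\|\leq\|\Re(A)\sharp\Re(B)\|$, which is exactly a Schwarz inequality for the positive block $\Re(M)$; this will come from Lemma~\ref{When does Schwarz inequality hold for postive A and B} applied to the triple $(\Re(A),\Re(B),X)$, \emph{provided} one of Ando's three conditions holds for the real parts.

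The second step is to check that each hypothesis transfers to the real parts. Condition~3 is immediate: $B=kA$ with $k>0$ gives $\Re(B)=k\Re(A)$. Condition~1 is where the normality of $A$ is essential: from $AX=XA$ the Fuglede--Putnam theorem yields $A^*X=XA^*$, and adding the two relations gives $\Re(A)\,X=X\,\Re(A)$. In both cases the corresponding Ando condition for $(\Re(A),\Re(B),X)$ holds, so Lemma~\ref{When does Schwarz inequality hold for postive A and B} delivers $\|X\|\leq\|\Re(A)\sharp\Re(B)\|$ and the norm chain of the first paragraph finishes these two cases.

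The hard part will be Condition~2. Ando's condition~2 for the positive block $\Re(M)$ reads $X^*(\Re(A))^{-1}X=X(\Re(A))^{-1}X^*$, whereas we are only given $X^*A^{-1}X=XA^{-1}X^*$. Here I would again use normality: conjugating by a unitary we may assume $A$ is diagonal, and since $A^{-1}$ is then normal with commuting Hermitian and skew-Hermitian parts, the given identity splits into $X^*\Re(A^{-1})X=X\Re(A^{-1})X^*$ together with its imaginary analogue. The genuine obstacle is that $\Re(A^{-1})$ is \emph{not} $(\Re(A))^{-1}$ --- Lemma~\ref{Copmarision bw R(f(A)) and f(R(A))} only yields the two-sided estimate $\cos^2\alpha\,(\Re(A))^{-1}\leq\Re(A^{-1})\leq(\Re(A))^{-1}$ --- so the $A^{-1}$-balance of $X$ does not formally force the $(\Re(A))^{-1}$-balance needed to invoke Ando, and indeed a naive replacement of $(\Re(A))^{-1}$ by $\Re(A^{-1})$ loses a factor of $\sec\alpha$. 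I expect this to be the crux: one must either show, using the joint spectral decomposition of $\Re(A)$ and $\Im(A)$ together with the structure of the Hermitian ``defects'' $X^*P_mX-XP_mX^*$ attached to the spectral projections $P_m$ of $A$, that the $(\Re(A))^{-1}$-balance does follow under normality, or else bypass the reduction to real means entirely and bound $\|X\|$ by $\|A\sharp B\|$ directly, mimicking Ando's own proof of his condition~2 in the accretive setting.
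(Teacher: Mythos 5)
Your overall strategy --- pass to $\Re(M)\geq O$, apply Ando's Lemma~\ref{When does Schwarz inequality hold for postive A and B} to the triple $(\Re(A),\Re(B),X)$, and finish with the chain $\|X\|\leq\|\Re(A)\sharp\Re(B)\|\leq\|\Re(A\sharp B)\|\leq\|A\sharp B\|$ --- is exactly the paper's, and your treatment of conditions 1 and 3 (Fuglede--Putnam giving $\Re(A)X=X\Re(A)$; $\Re(B)=k\Re(A)$) coincides with the published proof and is correct.

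The gap is condition 2, which you explicitly leave open, so as written your proposal does not prove that case. You should know, however, that the paper's own proof of this case consists of the single sentence ``if $X^*A^{-1}X=XA^{-1}X^*$, then $X^*(\Re(A))^{-1}X=X(\Re(A))^{-1}X^*$,'' offered with no justification beyond the word ``similarly'' --- precisely the implication you flag as doubtful. Your doubt is well founded. Taking Hermitian and skew-Hermitian parts of the hypothesis gives $X^*\Re(A^{-1})X=X\Re(A^{-1})X^*$ and $X^*\Im(A^{-1})X=X\Im(A^{-1})X^*$; diagonalizing the normal matrix $A=U\,\mathrm{diag}(d_j)\,U^*$ and writing $H_j$ for the Hermitian defects attached to the spectral projections, these amount to the two relations $\sum_j\Re(d_j^{-1})H_j=O$ and $\sum_j\Im(d_j^{-1})H_j=O$, whereas Ando's condition requires $\sum_j(\Re d_j)^{-1}H_j=O$. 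Since $(\Re d_j)^{-1}=\Re(d_j^{-1})-t_j\,\Im(d_j^{-1})$ with $t_j=\Im(d_j)/\Re(d_j)$ depending on $j$, the needed relation is a $j$-dependent combination of the two known ones and does not follow in general; it does follow when all $t_j$ coincide, i.e.\ when the spectrum of $A$ lies on a single ray, in which case condition 2 reduces to the positive case by factoring out a unimodular scalar. For a general normal strictly accretive $A$ an additional argument (or hypothesis) is required; your two suggested escape routes --- exploiting the structure of the defects $H_j$, or redoing Ando's proof of his condition 2 directly in the accretive setting --- are the right places to look, but neither is carried out, so both your proposal and the paper leave this case incomplete.
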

\begin{proof}
    Assume first that \(AX=XA\). Then, as a consequence of Fuglede-Putnam Theorem \cite{fuglede1950commutativity,putnam1951normal}, we infer that \(A^*X=XA^*\). Adding these two relations for $A$ and $X$ implies \(\Re(A)X=X\Re(A)\). Now, since \(M\) is accretive, then \(\Re(M)= \begin{bmatrix}
        \Re(A) & X\\
        X^*& \Re(B)
    \end{bmatrix}\geq O.\) 
    
    Consequently, by Lemma \ref{When does Schwarz inequality hold for postive A and B}, then Lemma \ref{accretivity of A sigma B}, \[\Vert X\Vert \leq \Vert \Re(A)\sharp \Re(B) \Vert \leq \Vert \Re(A\sharp B)\Vert \leq \Vert A\sharp B\Vert.\]
   
    Similarly, if \(X^* A^{-1} X= X A^{-1}X^*\), then \(X^* (\Re(A))^{-1} X= X (\Re(A))^{-1}X^*\).\\
    Now, since \(M\) is accretive, then \(\Re(M)= \begin{bmatrix}
        \Re(A) & X\\
        X^*& \Re(B)
    \end{bmatrix}\geq O.\) Proceeding as before proves the desired conclusion when \(X^* A^{-1} X= X A^{-1}X^*\).\\
    Finally, if $B=kA$ for some $k>0$, then $\Re(B)=k\Re(A)$, and the same argument as above completes the proof.
\end{proof}
The following two lemmas are needed for our last result.
\begin{lemma}\cite[Theorem 3.1]{burqan2017singular}\label{s.value ineq obtained from positivity of the block m'x}
    Let \(A,B,X\in \mathcal{M}_n\) be such that \(AX=XA\) and \(M= \begin{bmatrix}
        A & X\\
        X^*& B
    \end{bmatrix}\geq O\). Then \[s_j(X)\leq s_j\Big(A^{1/2}\, B^{1/2}\Big) \; \text{ for } j=1,2,...,n.\]
\end{lemma}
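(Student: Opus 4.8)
The plan is to derive the singular value inequality from a single operator inequality (in the positive semidefinite order) of the form $X^{*}X\le A^{1/2}BA^{1/2}$, which upon applying Weyl's monotonicity principle together with a standard singular value identity yields the claim. The role of the commutativity hypothesis $AX=XA$ is precisely to make the passage to such an operator inequality possible.

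First I would record that the diagonal blocks of a positive semi-definite matrix are positive semi-definite, so $A,B\ge O$, and I would reduce to the case $A,B>O$ by the perturbation $A\mapsto A+\varepsilon I$, $B\mapsto B+\varepsilon I$. Adding $\varepsilon I_{2n}$ to $M$ keeps it positive semi-definite, makes both diagonal blocks positive definite, and preserves the commutativity since $(A+\varepsilon I)X=AX+\varepsilon X=XA+\varepsilon X=X(A+\varepsilon I)$. Letting $\varepsilon\to 0^{+}$ at the end recovers the general statement by continuity of singular values, as $(A+\varepsilon I)^{1/2}(B+\varepsilon I)^{1/2}\to A^{1/2}B^{1/2}$.

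Assuming now $A,B>O$, I would conjugate $M$ by the self-adjoint unitary flip $\begin{bmatrix}O&I\\I&O\end{bmatrix}$ to obtain $\begin{bmatrix}B&X^{*}\\X&A\end{bmatrix}\ge O$, and apply Lemma \ref{lemma_pos_inv} to this block form to get the Schur complement bound $X^{*}A^{-1}X\le B$. The key step is then to invoke $AX=XA$: by the functional calculus $X$ commutes with $A^{\pm 1/2}$, and taking adjoints shows that $X^{*}$ commutes with $A^{\pm 1/2}$ as well. This lets me rewrite $X^{*}A^{-1}X=X^{*}A^{-1/2}A^{-1/2}X=A^{-1/2}X^{*}XA^{-1/2}$. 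Substituting into $X^{*}A^{-1}X\le B$ and conjugating both sides by the positive matrix $A^{1/2}$ produces the desired operator inequality $X^{*}X\le A^{1/2}BA^{1/2}$.

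Finally I would apply Weyl's monotonicity principle to conclude $\lambda_j(X^{*}X)\le \lambda_j(A^{1/2}BA^{1/2})$ for every $j$, and then use $A^{1/2}BA^{1/2}=(B^{1/2}A^{1/2})^{*}(B^{1/2}A^{1/2})$ to identify $\lambda_j(A^{1/2}BA^{1/2})=s_j(B^{1/2}A^{1/2})^{2}=s_j(A^{1/2}B^{1/2})^{2}$, the last equality because singular values are unchanged under taking adjoints. Since $\lambda_j(X^{*}X)=s_j(X)^{2}$, taking square roots yields $s_j(X)\le s_j(A^{1/2}B^{1/2})$. The main obstacle is the commutativity manipulation collapsing $X^{*}A^{-1}X$ into $A^{-1/2}(X^{*}X)A^{-1/2}$: without $AX=XA$ the Schur complement bound $X^{*}A^{-1}X\le B$ cannot be converted into a bound on $X^{*}X$ alone, and it is exactly here that the hypothesis is indispensable; the perturbation reduction and the concluding Weyl and identity steps are routine.
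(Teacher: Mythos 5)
The paper does not prove this lemma; it is imported verbatim from Burqan--Kittaneh \cite[Theorem 3.1]{burqan2017singular}, so there is no in-paper argument to compare against. Your proposal is a correct, self-contained proof: the perturbation $A\mapsto A+\varepsilon I$, $B\mapsto B+\varepsilon I$ legitimately reduces to the positive definite case while preserving $AX=XA$; the flip conjugation correctly extracts the Schur-complement bound $X^*A^{-1}X\le B$ from the form of Lemma \ref{lemma_pos_inv} stated in the paper; since $A>O$, the square roots $A^{\pm1/2}$ are polynomials in $A$, so $X$ (and, by taking adjoints, $X^*$) commutes with them, which justifies the collapse $X^*A^{-1}X=A^{-1/2}X^*XA^{-1/2}$ and hence $X^*X\le A^{1/2}BA^{1/2}$; and the final passage through Weyl's monotonicity principle and the identity $\lambda_j(A^{1/2}BA^{1/2})=s_j(A^{1/2}B^{1/2})^2$ is standard. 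This Schur-complement-plus-commutativity route is essentially the argument in the cited source, so nothing further is needed.
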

\begin{lemma}\cite{bhatia1990singular}\label{s.value of fractional power and that of the arith mean}
    Let \(A,B\in M_n\) be positive semi-definite. Then \[s_j\Big(A^{1/2}\, B^{1/2}\Big)\leq s_j\Big(\dfrac{A+B}{2}\Big)\; \text{ for } j=1,2,...,n.\]
\end{lemma}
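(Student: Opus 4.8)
The plan is to realize $A^{1/2}B^{1/2}$ as the off-diagonal block of a positive semi-definite $2n\times 2n$ matrix and then invoke Tao's singular value inequality, namely part \eqref{Tao's singular values ineq} of Lemma \ref{lemma_pos_bl}. This keeps the whole argument inside the block-matrix toolbox already developed in the paper and avoids any majorization machinery.

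First I would introduce the $2n\times n$ matrix $P=\begin{bmatrix}A^{1/2}\\ B^{1/2}\end{bmatrix}$ and form
\[
M=PP^*=\begin{bmatrix}A & A^{1/2}B^{1/2}\\ B^{1/2}A^{1/2} & B\end{bmatrix}.
\]
Because $M$ has the form $PP^*$, it is automatically positive semi-definite, so part \eqref{Tao's singular values ineq} of Lemma \ref{lemma_pos_bl}, applied with off-diagonal block $X=A^{1/2}B^{1/2}$, gives
\[
2\,s_j\bigl(A^{1/2}B^{1/2}\bigr)\leq s_j(M),\qquad j=1,\dots,n.
\]

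The remaining task is to identify $s_j(M)$ with $s_j(A+B)$ for these first $n$ indices. Since $M\geq O$, its singular values coincide with its eigenvalues. The nonzero eigenvalues of $M=PP^*$ are exactly the nonzero eigenvalues of $P^*P=A^{1/2}A^{1/2}+B^{1/2}B^{1/2}=A+B$; and since $\operatorname{rank}M\leq n$, the $2n$ eigenvalues of $M$ are precisely the $n$ eigenvalues of $A+B$ together with $n$ zeros. Hence $s_j(M)=\lambda_j(A+B)=s_j(A+B)$ for $j=1,\dots,n$, and feeding this into the displayed inequality yields $2\,s_j\bigl(A^{1/2}B^{1/2}\bigr)\leq s_j(A+B)$, which is exactly the asserted bound after dividing by $2$.

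The one step that needs care is the eigenvalue bookkeeping in the previous paragraph: I must argue that the rank bound forces the $n$ ``extra'' eigenvalues of the $2n\times 2n$ matrix $M$ to be zero, so that they fall into the last $n$ positions and the first $n$ singular values of $M$ are genuinely those of $A+B$, with no index shift. No separate argument is needed for singular $A$ or $B$, since the positive square roots $A^{1/2},B^{1/2}$ always exist and Tao's inequality is valid for every positive semi-definite block matrix.
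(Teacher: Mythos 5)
Your argument is correct, and it is genuinely different from what the paper does: the paper offers no proof of this lemma at all, citing it to Bhatia--Kittaneh, whereas you derive it from part \eqref{Tao's singular values ineq} of Lemma \ref{lemma_pos_bl}, which the paper has already quoted. The computation checks out: with $P=\bigl[\begin{smallmatrix}A^{1/2}\\ B^{1/2}\end{smallmatrix}\bigr]$ the matrix $M=PP^*$ is positive semi-definite with off-diagonal block $A^{1/2}B^{1/2}$ and adjoint block $B^{1/2}A^{1/2}=(A^{1/2}B^{1/2})^*$, so Tao's inequality applies; and since the nonzero eigenvalues of $PP^*$ and of $P^*P=A+B$ coincide with multiplicity while $\operatorname{rank}M\le n$, the $n$ ``extra'' eigenvalues of $M$ are zero and the decreasingly ordered list of eigenvalues of $M$ begins with $\lambda_1(A+B),\dots,\lambda_n(A+B)$ (this holds even when some of these are zero, since the appended zeros then merely tie with them). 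Hence $s_j(M)=s_j(A+B)$ for $j\le n$ and the bound follows. There is no circularity: Tao's inequality is an independent, later, and strictly more general result, so your derivation in effect exhibits the Bhatia--Kittaneh inequality as the special case of Tao's lemma obtained from a Gram-type block matrix. What this buys is a self-contained proof inside the paper's block-matrix toolbox; what it costs is that it rests on the deeper of the two results, whereas the original 1990 proof is independent of Tao's theorem.
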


\begin{theorem}
   Let \(A,B,X\in \mathcal{M}_n\) be such that \(M= \begin{bmatrix}
        A & X\\
        X^*& B
    \end{bmatrix}\in \Gamma_n\) and \(AX=XA\). Then \[\Vert X \Vert_u \leq \dfrac{1}{2} \Vert A+B\Vert_u.\]
\end{theorem}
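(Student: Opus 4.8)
The plan is to descend to the Hermitian part of $M$ and then run the two singular value lemmas stated just above. Since $M$ is accretive we have $\Re(M)=\begin{bmatrix}\Re(A)&X\\X^*&\Re(B)\end{bmatrix}\ge O$; in particular $\Re(A),\Re(B)\ge O$, so this is a genuine positive block matrix. I would apply Lemma \ref{s.value ineq obtained from positivity of the block m'x} to $\Re(M)$ to obtain $s_j(X)\le s_j\big(\Re(A)^{1/2}\Re(B)^{1/2}\big)$, then feed the positive matrices $\Re(A),\Re(B)$ into Lemma \ref{s.value of fractional power and that of the arith mean} to get $s_j\big(\Re(A)^{1/2}\Re(B)^{1/2}\big)\le s_j\big(\tfrac{\Re(A)+\Re(B)}{2}\big)$. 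Chaining these gives $s_j(X)\le s_j\big(\tfrac{\Re(A)+\Re(B)}{2}\big)$ for every $j$.

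From this singular value domination the norm bound follows mechanically. Every unitarily invariant norm is monotone in the singular values (Ky Fan dominance), so $\|X\|_u\le\big\|\tfrac{\Re(A)+\Re(B)}{2}\big\|_u=\tfrac12\|\Re(A+B)\|_u$, and the triangle inequality together with $\|Z^*\|_u=\|Z\|_u$ yields the elementary contraction $\|\Re(A+B)\|_u=\tfrac12\|(A+B)+(A+B)^*\|_u\le\|A+B\|_u$. Combining these two estimates produces exactly $\|X\|_u\le\tfrac12\|A+B\|_u$.

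The single nontrivial point, and the step I expect to be the main obstacle, is legitimizing the use of Lemma \ref{s.value ineq obtained from positivity of the block m'x}, whose hypothesis demands that the $(1,1)$ block commute with the off-diagonal block; applied to $\Re(M)$ this asks for $\Re(A)X=X\Re(A)$, whereas I am only handed $AX=XA$. I would bridge this exactly as in the proof of Theorem \ref{When does Schwarz inequality hold for accretive A and B}: the Fuglede--Putnam theorem promotes $AX=XA$ to $A^*X=XA^*$, and adding the two relations gives $(A+A^*)X=X(A+A^*)$, i.e. $\Re(A)X=X\Re(A)$. This bridge requires $A$ to be normal, so along this route I would either add normality of $A$ to the hypotheses (consistent with the preceding theorem) or sidestep the commutativity altogether.

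Indeed, the cleaner alternative I would keep in reserve bypasses Lemma \ref{s.value ineq obtained from positivity of the block m'x} entirely and uses no commutativity at all: applying Tao's inequality (part \eqref{Tao's singular values ineq} of Lemma \ref{lemma_pos_bl}) to $\Re(M)\ge O$ gives $2s_j(X)\le s_j(\Re(M))$, while the norm bound in part (i) of Lemma \ref{lemma_pos_bl} gives $\|\Re(M)\|_u\le\|\Re(A)+\Re(B)\|_u$. Passing from the singular value inequality to norms and composing with the real-part contraction above yields $2\|X\|_u\le\|\Re(M)\|_u\le\|\Re(A)+\Re(B)\|_u\le\|A+B\|_u$, which is the desired conclusion and in fact suggests that the commutativity hypothesis $AX=XA$ is superfluous for this particular estimate.
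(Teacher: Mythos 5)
Your primary argument is exactly the paper's proof: pass to $\Re(M)\ge O$, apply Lemma \ref{s.value ineq obtained from positivity of the block m'x} and then Lemma \ref{s.value of fractional power and that of the arith mean} to get $s_j(X)\le s_j\bigl(\tfrac{\Re(A)+\Re(B)}{2}\bigr)$, and finish with Fan dominance and the contraction $\|\Re(A+B)\|_u\le\|A+B\|_u$. The obstacle you flag is genuine, and it is present in the paper's own proof: the paper simply asserts that $AX=XA$ implies $\Re(A)X=X\Re(A)$, without assuming $A$ normal, whereas this implication requires $A^*X=XA^*$, which Fuglede--Putnam supplies only for normal $A$ (for non-normal $A$ and $X=A$ one has $AX=XA$ but $A^*X=A^*A\ne AA^*=XA^*$). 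So your instinct either to add normality of $A$ (as in Theorem \ref{When does Schwarz inequality hold for accretive A and B}) or to bypass the commutativity is correct, and you have in fact caught a gap in the published argument rather than merely in your own. Your reserve argument repairs the statement completely and is self-contained: applying parts (i) and (iii) of Lemma \ref{lemma_pos_bl} to $\Re(M)\ge O$ gives $2\|X\|_u\le\|\Re(M)\|_u\le\|\Re(A)+\Re(B)\|_u=\|\Re(A+B)\|_u\le\|A+B\|_u$, with no use of $AX=XA$ at all, exactly as the paper itself passes from singular values to norms in Proposition \ref{generalization of Tao's sing.values ineq}. The only thing the commutativity route buys, when it is legitimately available, is the stronger termwise singular-value inequality $s_j(X)\le s_j\bigl(\tfrac{A+B}{2}\bigr)$, which the Tao-type route does not deliver; for the stated norm inequality the hypothesis $AX=XA$ is indeed superfluous.
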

\begin{proof}
    Since \(M\) is accretive, then \(\Re(M)= \begin{bmatrix}
        \Re(A) & X\\
        X^*& \Re(B)
    \end{bmatrix}\geq O\), and since \(AX=XA\), then \(\Re(A)X=X\Re(A)\). Consequently, by Lemma \ref{s.value ineq obtained from positivity of the block m'x}, then Lemma \ref{s.value of fractional power and that of the arith mean}, we obtain
    \begin{align*}
        s_j(X) & \leq s_j\Big( (\Re(A))^{1/2}\, (\Re(B))^{1/2}\Big)\\
        & \leq s_j\Big(\Re\Big(\dfrac{A+B}{2}\Big)\Big)\\
        &\leq s_j\Big(\dfrac{A+B}{2}\Big)\; \text{ for } j=1,2,...,n.\\
         \end{align*}
Equivalently, \(\Vert X \Vert_u \leq \dfrac{1}{2} \Vert A+B\Vert_u\).
\end{proof}

\bibliographystyle{plain}

\end{document}